\newcommand{\EndProof}{}
\newtheorem{definition}{Definition}[section]
\newtheorem{lemma}{Lemma}[section]
\newtheorem{example}{Example}[section]
\newtheorem{remark}{Remark}[section]
\newtheorem*{claim}{Claim}
\newtheorem{theorem}{Theorem}[section]
\newtheorem{corollary}{Corollary}[section]
\newtheorem{question}{Question}
\newcommand{\Fil}{\mathcal{F}}
\newcommand{\Ul}{\mathcal{U}}
\newcommand{\Nat}{\mathbb{N}}
\newcommand{\zwakkePfilter}{weak P$^+$-filter}
\newcommand{\sterkePfilter}{P$^+$-filter}
\newcommand{\Peigenschap}{P$^+$-property}
\begin{document}

\title{{Filter-dependent versions} of the\\ Uniform Boundedness Principle}
	
	\author{Ben De Bondt\footnote{Ben De Bondt gratefully acknowledges support by Ghent University, through BOF PhD grant BOF18/DOC/298, which supported both the research and the preparation of the current article.} \footnote{Currently at Universit\'e de Paris, Institut de Math\'ematiques de Jussieu-Paris Rive Gauche, as a Cofund MathInParis PhD fellow. The Cofund MathInParis PhD project has received funding from the European Union’s Horizon 2020 research and innovation programme under the Marie Sk\l{}odowska-Curie grant agreement No. 754362.
	\protect \includegraphics[width=0.45truecm]{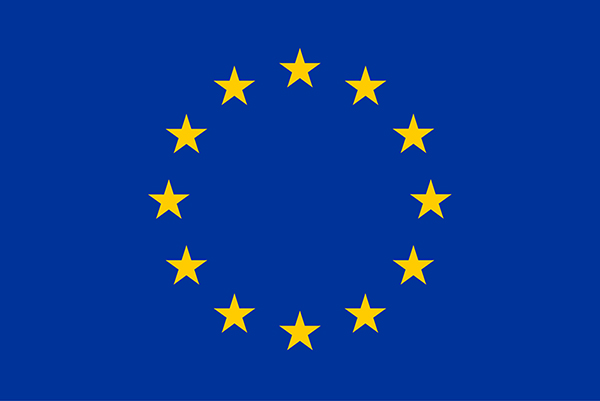}}
			\\
		\phantom{Ghent University}  \\
		\and 
		Hans Vernaeve\footnote{Hans Vernaeve gratefully acknowledges support by research Grant 1.5.129.18N of the Research Foundation Flanders FWO.} \\
		\phantom{Ghent University} \\
	}
	\date{}
	\maketitle
	
	\begin{abstract}
	\noindent
		For every filter~$\Fil$ on $\Nat,$  we introduce and study corresponding {\it uniform\linebreak
        $\Fil$-boundedness principles for locally convex topological vector spaces}. These principles generalise the classical uniform boundedness principles for sequences of continuous linear maps by coinciding with these principles when the filter~$\Fil$ equals the Fr\'echet filter of cofinite subsets of $\Nat.$ We determine combinatorial properties for the filter~$\Fil$ which ensure that these uniform $\Fil$-boundedness principles hold for every Fr\'echet space.
        Furthermore, for several types of Fr\'echet spaces, we also isolate properties of $\Fil$ that are necessary for the validity of {these} uniform $\Fil$-boundedness principles.
        For every infinite-dimensional Banach space~$X$, we obtain in this way exact combinatorial characterisations of those filters~$\Fil$ for which the corresponding uniform $\Fil$-boundedness principles hold true for~$X$.
	\end{abstract}
	
	\noindent
	{\bf Keywords}: Uniform Boundedness Principle, Filters on $\Nat$, Fr\'echet spaces,\\ Banach spaces\\
	{\bf MSC2010}: 46A04, 03E20 (Primary), 40A35, 46B99 (Secondary)

\newpage
\section{Introduction}
This paper contributes to the line of research (\cite{kadets}, \cite{kadets3}, \cite{connor}, \cite{filipow}, \cite{ganichev}, \cite{kadets2},\linebreak \cite{kadets4},   \ldots)
that is concerned with the study of {\it filter versions}\footnote{
    Because of the duality between filters and ideals, one can equivalently study the cor\-res\-pon\-ding {\it ideal versions} of such theorems. Several authors choose to do so.}
    of several theorems in analysis {(or infinite combinatorics)}, whose classical proofs critically rely on Baire-categoric, measure-theoretic, or Ramsey-theoretic ideas. Before clarifying how to derive in a canonical way the statement $T_\Fil$ for a filter version of the theorem $T$, let us quickly motivate the interest in the new statement $T_\Fil$ (see also the introduction of \cite{kadets4}). As well as in generalising the theorem $T$ involved, additional motivation for the study of the filter version $T_\Fil$ lies in gaining understanding of the combinatorial principles that are needed to prove $T$. Indeed, those filters on $\Nat$ that preserve validity of the filter versions often turn out to possess  interesting combinatorial properties, closely related to the \mbox{P-prop}\-er\-ties and \mbox{Q-prop}\-er\-ties, that are classically studied in set theory.
 In the present paper, we show that this is true as well for filter versions of the uniform boundedness principle for Fr\'echet spaces.
 \smallskip
 
 \noindent
 One main corollary of the uniform boundedness principle entails continuity of the pointwise limits of sequences of continuous linear mappings.
 In \cite{kolk} it was shown that one particular filter version of this consequence of the UBP holds for separable Banach spaces whenever the dual ideal has the property denoted in \cite{kolk} by property (APO).\smallskip

\noindent Another important consequence of the UBP (weak convergence implies boundedness in Banach spaces) was studied from the filter perspective in \cite{connor}, \cite{ganichev} (see Remark \ref{nieuwe opm} below).\smallskip

 \noindent
 In \cite{kadets3}, Avil{\'e}s, Kadets, P{\'e}rez and Solecki studied the
{\it Baire filters}\footnote{Or rather, the ideals dual to such filters.}, which have the defining property that for every complete metric space~$X$ and every order-reversing map $$f: (\Fil, \subseteq) \to (\{F \subseteq X: F \text{ nowhere dense in } X \}, \subseteq),$$ one has that $$\bigcup_{A\in \Fil} f(A) \neq X.$$
 Given that the uniform boundedness principle comes as a consequence of the Baire category theorem, it is not surprising that the uniform \mbox{$\Fil$-bound}\-ed\-ness principle will hold for every Fr\'echet space whenever $\Fil$ is a (free) Baire filter (see  Lemma~\ref{Bairefilter} below).
 However, it was found in \cite{kadets3}, that the notion of Baire filter is a rather restrictive one. The analytic Baire filters for instance, are all countably generated. This makes it natural to determine weaker conditions under which filter versions of the uniform boundedness principle are true.\footnote{In particular,  Example \ref{vb} will exhibit an analytic filter of uncountable character for which the uniform \mbox{$\Fil$-bound}\-ed\-ness principle holds for every Fr\'echet space.}

\subsection{Terminology and notation}
We now proceed to the introduction of the key concepts under consideration.\\
Every filter~$\mathcal{F}$ on $\mathbb{N}$ gives rise to the generalised quantifier $(\forall_{\!\mathcal{F}}\, i \in \mathbb{N})$ defined by the following rule:
$$(\forall_{\!\mathcal{F}}\, i \in \mathbb{N})(\Phi(i)) \quad \Leftrightarrow \quad \{ i\in \mathbb{N}: \Phi(i) \} \in \mathcal{F},$$
for every formula $\Phi.$
We let $\mathcal{C}$ be the Fr\'echet filter consisting of the cofinite subsets of $\mathbb{N}$ and let $\mathcal{F}$ be an arbitrary filter on $\mathbb{N}.$ Then, any concept which can be expressed in terms of the quantifier  $(\forall_{\!\mathcal{C}}\, i \in \mathbb{N})$ has a direct \mbox{$\mathcal{F}$-analogue}, obtained by replacing the quantifier  $(\forall_{\!\mathcal{C}}\, i \in \mathbb{N})$ by the quantifier  $(\forall_{\!\mathcal{F}}\, i \in \mathbb{N}).$
For example, convergence of a sequence $(x_i)_i$ in a metric space $(M,d)$ to $x \in M$ can be expressed by the formula
$$(\forall \varepsilon \in \mathbb{R}_{>0})(\forall_{\!\mathcal{C}}\, i \in \mathbb{N}) \quad d(x,x_i) < \varepsilon,$$
and the replacement of generalised quantifiers gives the useful concept of \mbox{$\Fil$-con}\-ver\-gence of the sequence $(x_i)_i$ to $x$:
 $$(\forall \varepsilon \in \mathbb{R}_{>0})(\forall_{\!\mathcal{F}}\, i \in \mathbb{N}) \quad d(x,x_i) < \varepsilon.$$
 In the same fashion, we now define \mbox{$\Fil$-analogues} for two principal concepts from the theory of topological vector spaces, that of boundedness of a sequence and that of equicontinuity of a sequence of (continuous linear) maps. In what follows, all sequences are indexed by the non-negative integers, unless stated otherwise.\\
 Let $X$ be a topological vector space and $\Fil$ a filter on $\Nat$. A sequence $(x_i)_i$ in $X$ is \mbox{\bf $\mathcal{F}$-bounded} if for every \mbox{$0$-neigh}\-bour\-hood $U$ in $X$ there exists $t>0$ such that
$$ tx_i \in U \qquad (\forall_{\!\mathcal{F}}\, i \in \mathbb{N}).$$
Note that in a normed space~$X$, $(x_i)_i$ is \mbox{$\mathcal{F}$-bounded} {if and only if} there exists $C>0$ such that
$$ \|x_i\|\le C\qquad (\forall_{\!\mathcal{F}}\, i \in \mathbb{N}).$$
Also note that for general filters $\mathcal{F}$, \mbox{$\Fil$-bound}\-ed\-ness is not necessarily preserved by rearrangements of sequences.\\
Given two topological vector spaces $X,Y$, let $\mathcal{L}(X,Y)$ denote the vector space of continuous linear maps from $X$ to $Y$.\\
A sequence $(T_i)_i$ in $\mathcal{L}(X,Y)$ is {\bf \mbox{$\Fil$-equi}\-con\-tin\-uous} if for every \mbox{$0$-neigh}\-bour\-hood $V$ in $Y$ there exists a \mbox{$0$-neigh}\-bour\-hood $U$ in $X$ such that
$$ T_i[U] \subseteq V \qquad (\forall_{\!\mathcal{F}}\, i \in \mathbb{N}).$$
The reader can observe that if $X$ and $Y$ are both normed spaces, a sequence $(T_i)_i$ of continuous linear maps from $X$ to $Y$ is \mbox{$\Fil$-equi}\-con\-tin\-uous precisely when it is \mbox{$\Fil$-bounded} in the normed space $\mathcal{L}(X,Y)$ of continuous linear operators from $X$ to $Y.$\\ Continuing in this same vein, a sequence $(T_i)_i$ in $\mathcal{L}(X,Y)$ is defined to be {\bf pointwise \mbox{$\mathcal{F}$-bounded}} when for every $x \in X,$ the sequence $(T_i(x))_i$ is an \mbox{$\mathcal{F}$-bounded} sequence in~$Y.$ Equivalently, $(T_i)_i$ is pointwise \mbox{$\mathcal{F}$-bounded} when it is \mbox{$\Fil$-bounded} in the space $\mathcal{L}(X,Y)$ equipped with the topology of pointwise convergence.\\
Given the topological vector spaces $X,Y$ and the filter~$\Fil$ on $\Nat,$ let $PW_{\Fil}(X,Y)$ and $EQ_{\Fil}(X,Y)$ denote respectively the set of all pointwise \mbox{$\mathcal{F}$-bounded} sequences in $\mathcal{L}(X,Y)$ and the set of all \mbox{$\Fil$-equi}\-con\-tin\-uous sequences in $\mathcal{L}(X,Y)$.\\ It is clear that $EQ_{\Fil}(X,Y) \subseteq PW_{\Fil}(X,Y).$ The validity of the reverse inclusion is the subject of uniform \mbox{$\Fil$-bound}\-ed\-ness principles. We write $UBP_{\Fil}(X,Y)$ as abbreviation for the statement
$PW_{\Fil}(X,Y) = EQ_{\Fil}(X,Y)$ and say that {\it the uniform \mbox{$\Fil$-bound}\-ed\-ness principle holds for continuous linear maps from $X$ to $Y$} whenever this statement is true.
We will also say that the uniform \mbox{$\Fil$-bound}\-ed\-ness principle holds for $X$ if $UBP_\Fil(X,Y)$ holds for every locally convex space~$Y.$
We define $\mathcal{F}$ to be a {\bf Fr\'echet-UBP-filter} if $UBP_{\Fil}(X,Y)$ holds for every Fr\'echet space~$X$ and every locally convex space~$Y.$ By the classical theorem of Banach and Steinhaus, the Fr\'echet filter~$\mathcal{C}$ is a Fr\'echet-UBP-filter.\\We define $\mathcal{F}$ to be a {\bf Banach-UBP-filter} if $UBP_{\Fil}(X,Y)$ holds for every Banach space~$X$ and every locally convex space~$Y.$ \bigskip$\;$\bigskip

\noindent
The key concepts being introduced above, we can now state our main objectives.\\ Before doing so, we briefly illustrate the concepts just introduced by checking that the Baire filters studied in \cite{kadets3} are indeed Fr\'echet-UBP-filters.

\begin{lemma}$\;$\label{Bairefilter}\\
    If the filter $\Fil$ (on $\Nat$) is a Baire filter containing every cofinite subset of $\Nat$, then $UBP_{\Fil}(X,Y)$ holds for every Fr\'echet space $X$ and every topological vector space~$Y.$
\end{lemma}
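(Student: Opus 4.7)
The plan is to emulate the classical Banach--Steinhaus argument, replacing the invocation of Baire's theorem by a single application of the Baire filter property of~$\Fil$ to a carefully chosen order-reversing family of closed subsets of the Fr\'echet space~$X$. Suppose towards a contradiction that some $(T_i)_i$ lies in $PW_\Fil(X,Y) \setminus EQ_\Fil(X,Y)$. First I would fix a $0$-neighbourhood~$V$ in~$Y$ witnessing the failure of $\Fil$-equicontinuity, and pick a balanced closed $0$-neighbourhood~$W$ of~$Y$ with $W + W \subseteq V$, which exists in any topological vector space.

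For $n \in \Nat$ and $A \in \Fil$ I would set
\[ F_{n,A} := \bigcap_{i \in A} T_i^{-1}(nW). \]
Each $F_{n,A}$ is closed, non-decreasing in~$n$ (as $W$ is balanced), and order-reversing in~$A$; moreover, pointwise $\Fil$-boundedness together with absorbance of~$W$ yields $X = \bigcup_{n \in \Nat,\, A \in \Fil} F_{n,A}$. A short computation along the lines of the classical proof shows each $F_{n,A}$ is nowhere dense: if $x_0 + U \subseteq F_{n,A}$ for some $0$-neighbourhood~$U$ in~$X$, then for every $u \in U$ and $i \in A$,
\[ T_i(u) = T_i(x_0 + u) - T_i(x_0) \in nW - nW = n(W + W) \subseteq nV, \]
so $T_i[U/n] \subseteq V$ for every $i \in A$, contradicting the choice of~$V$.

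The hard part is that the index set $\Nat \times \Fil$ is larger than~$\Fil$, so the Baire filter property cannot be invoked directly on the family $(F_{n,A})$. To collapse the two parameters into one I would use the hypothesis that~$\Fil$ contains every cofinite set: define $n_A := \min A$, giving an order-reversing function $\Fil \to \Nat$ that is moreover cofinal in the sense that for each $A_0 \in \Fil$ and each $N \in \Nat$, the set $A_0 \cap [N, \infty)$ lies in~$\Fil$ and has minimum~$\ge N$.

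Setting $f(A) := F_{n_A, A}$, order-reversingness of $f$ as a map from $\Fil$ into the nowhere dense subsets of~$X$ follows by combining the order-reversingness of~$n$ with the monotonicities of~$F_{n,A}$. The cofinality of~$n$ then forces $\bigcup_{A \in \Fil} f(A) = X$: given $x \in X$, pick $n_0$ and $A_0 \in \Fil$ with $x \in F_{n_0, A_0}$, and set $A := A_0 \cap [n_0, \infty) \in \Fil$; then $n_A \ge n_0$ and
\[ x \in F_{n_0, A_0} \subseteq F_{n_0, A} \subseteq F_{n_A, A} = f(A). \]
Since~$X$ is a complete metric space, this contradicts the Baire filter property of~$\Fil$, finishing the proof.
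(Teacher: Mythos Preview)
Your proof is correct and follows essentially the same approach as the paper's: both define the closed sets $\bigcap_{i\in A}T_i^{-1}(nW)$ and collapse the two-parameter family to a single order-reversing map $\Fil\to P(X)$ via $A\mapsto F_{\min A,\,A}$, using that $\Fil$ contains the cofinite sets. The only cosmetic difference is that you argue by contradiction (showing every $F_{n,A}$ is nowhere dense), whereas the paper argues directly (obtaining one $F_{n,A}$ with nonempty interior and deducing $\Fil$-equicontinuity from it).
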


\begin{proof}
Let $(T_i)_i$ be a pointwise \mbox{$\Fil$-bounded} sequence in $\mathcal{L}(X,Y)$. We mimic the Baire category proof of the uniform boundedness principle to prove that $(T_i)_i$ is an \mbox{$\Fil$-equi}\-con\-tin\-uous sequence. Let $W$ be an arbitrary $0$-neighbourhood in $Y$ and choose a second $0$-neigh\-bour\-hood $V$ in $Y$ which is closed and balanced, and satisfies $V-V \subseteq W$.\\
For every $F\in\mathcal{F}$ and $n\in\Nat$, we define a corresponding set $C_{F,n} \subseteq X$ as follows:  $$C_{F,n}:=\bigcap_{i\in F}T_i^{-1}[nV].$$
Then, the pointwise \mbox{$\Fil$-bound}\-ed\-ness of $(T_i)_i$ allows us to write:
$$X=\bigcup_{F\in \Fil}\bigcup_{n\in \Nat}C_{F,n}=\bigcup_{F\in\Fil}C_{F,\min(F)}.$$
Since $f:(\Fil,\subseteq)\to (P(X), \subseteq):F\mapsto C_{F,\min(F)}$ is an order-reversing map with $C_{F,\min(F)}$ closed for any $F\in\mathcal{F}$, it follows from the defining property of Baire filters that for some $F\in\Fil$, the set $C_{F,\min(F)}$ has non-empty interior.\\
Therefore, there exist $x\in X$, a $0$-neighbourhood $U$ in $X$ and $n \in \Nat$ such that
$$ T_i[x+U] \subseteq nV \qquad (\forall_{\!\mathcal{F}}\, i \in \mathbb{N}).$$
Hence,
$$ T_i\!\left[\frac{1}{n}\,U\right] \subseteq V-V \subseteq W \qquad (\forall_{\!\mathcal{F}}\, i \in \mathbb{N}).$$
\EndProof
\end{proof}
    
\subsection{Aims}
We will answer the following questions.
\begin{enumerate}
    \item Is every Banach-UBP-filter a Fr\'echet-UBP-filter? (Theorem~\ref{mainx}.)
\item Does there exist a combinatorial characterisation of the Fr\'echet- or Banach-UBP-filters? (Theorem~\ref{mainx}.)
\item Do there exist Fr\'echet-UBP-filters that are not countably generated?\\ (Example~\ref{vb}.)
\item If $X$ is an infinite-dimensional Banach space, does the validity of the uniform \mbox{$\Fil$-bound}\-ed\-ness principle for $X$ depend on the relationship between the space~$X$ and the filter~$\mathcal{F}$, or solely on the filter~$\Fil$? (Theorem~\ref{mainx}.)
\item Is the existence of either Fr\'echet- or Banach-UBP-ultrafilters consistent with $\mathsf{ZFC}?$ Is the existence of such ultrafilters provable in $\mathsf{ZFC}$?\\ (Corollary~\ref{cor-ZFC}.)
\end{enumerate}

\section{Combinatorics of filters on a countable set}
A filter on a set $S$ is a non-empty $\subseteq$-upwards closed set $\mathcal{F} \subsetneq P(S)$ (where $P(S)$ denotes the power set of $S$) which is closed under finite intersections.
Throughout this paper, we will always have $S = \Nat := \{0,1,2,3, \ldots\}$ and will only consider filters which are free, i.e.\ we will make the extra assumption that $\mathcal{C} \subseteq \mathcal{F}.$ The corresponding questions for non-free filters can be reduced to the case of free filters.\\
An ideal on a set $S$ is a non-empty $\subseteq$-downwards closed set $\mathcal{I} \subsetneq P(S)$ which is closed under finite unions.
If $\Fil$ is a filter on $S$, the set $\Fil^\ast= \{A \subseteq S : A^c \in \Fil \}$ is the dual ideal of $\Fil.$ We will denote the set $P(S)\setminus \Fil^\ast$ of \mbox{$\Fil$-sta}\-tion\-ary sets by $\Fil^+.$ Note that a subset of $S$ is \mbox{$\Fil$-sta}\-tion\-ary precisely when it has non-empty intersection with every element of~$\Fil.$\\
For two sets $A,B \subseteq \Nat,$ we write $A\subseteq^\ast B$ whenever $A$ is contained in $B$ modulo a finite set, i.e.\ $A \setminus B$ is finite. For $A\subseteq \Nat$ infinite, the enumerating function of $A$ is the unique strictly increasing surjection $\eta_A: \Nat \to A.$\\
We will show that the uniform boundedness principle for filters is closely related to the following combinatorial properties for filters on $\Nat.$

\begin{definition}$\;$\\ Let $\Fil$ be a filter on $\Nat,$ then $\Fil$ is
\begin{itemize}
\item  a {\bf \zwakkePfilter} if for every decreasing sequence $(A_k)_k$ of sets in $\mathcal{F}$ and for every \mbox{$\Fil$-sta}\-tion\-ary set $I\subseteq \Nat$ there exists an \mbox{$\Fil$-sta}\-tion\-ary set $B \subseteq I$ such that $B \subseteq^\ast A_k$
for every $k.$
\item a {\bf \sterkePfilter} if for every decreasing sequence $(A_k)_k$ of \mbox{$\Fil$-sta}\-tion\-ary sets, there exists an \mbox{$\Fil$-sta}\-tion\-ary set $B$ such that $B \subseteq^\ast A_k$
for every $k.$
\item a {\bf rapid$^+$ filter} if for every \mbox{$\Fil$-sta}\-tion\-ary set $I\subseteq \Nat$ and every strictly increasing function $f: \Nat \to \Nat,$ there exists an \mbox{$\Fil$-sta}\-tion\-ary set $B \subseteq I$ such that the function enumerating $B$ dominates $f,$ i.e.\ $f\leq \eta_B$.
\end{itemize}
\end{definition}

\noindent
The rapid$^+$- and (weak) $P^+$-properties of filters, along with several closely related variations, have appeared in prior literature (see \cite{pino}, \cite{hrusak}, \cite{cardinalinvariants}, \ldots).
We advise the reader to be careful while consulting this literature as the terminology used to denote combinatorial properties of filters does vary among authors.
Let us point out that the study of these and related properties of filters has originated in the study of ultrafilters on $\Nat$. For free ultrafilters the weak {\Peigenschap}, the {\Peigenschap} and several other related properties coincide and they determine a topological invariant of points in the Stone-\v{C}ech remainder $\beta \Nat \setminus \Nat$. In \cite{rudin},
this $P$-property was used by W.\ Rudin to study non-homogeneity of the space $\beta \Nat \setminus \Nat$.\\
In the following Lemma~\ref{alternatievekaraktrerisatieP+}, Lemma~\ref{alternatievekaraktrerisatiePx} and Lemma~\ref{alternatievekaraktrerisatierapid}, we formulate well-known alternative characterisations of respectively {\zwakkePfilter s}, {\sterkePfilter s} and rapid$^+$ filters, which will be of value further on. The proof of Lemma~\ref{alternatievekaraktrerisatiePx} is a straightforward modification of the proof of Lemma~\ref{alternatievekaraktrerisatieP+}, so we only give the latter. See also \cite{pino} and \cite{cardinalinvariants} for similar proofs of these (and closely related) results.

\begin{lemma}$\;$\label{alternatievekaraktrerisatieP+}\\
A filter $\Fil$ on $\Nat$ is a {\zwakkePfilter} if and only if for every function $\Gamma: \Nat \to \Nat$ one of the following two statements holds:
\begin{itemize}
\item $\Gamma$ is bounded on some \mbox{$\Fil$-sta}\-tion\-ary set,
\item every \mbox{$\Fil$-sta}\-tion\-ary set $I$ has an \mbox{$\Fil$-sta}\-tion\-ary subset on which $\Gamma$ is finite-to-one.
\end{itemize}
\end{lemma}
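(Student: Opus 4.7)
The plan is to prove both directions by a common construction relating a function $\Gamma : \Nat \to \Nat$ with a decreasing sequence of sets in $\Fil$, using the fact that a set $A \subseteq \Nat$ lies in $\Fil$ iff $A^c$ fails to be $\Fil$-stationary.

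For the forward implication, assume $\Fil$ is a \zwakkePfilter{} and let $\Gamma : \Nat \to \Nat$ be given. If $\Gamma$ is bounded on no $\Fil$-stationary set, then for each $k$ the set $\{i : \Gamma(i) < k\}$ is not $\Fil$-stationary, so its complement $A_k := \{i : \Gamma(i) \geq k\}$ belongs to $\Fil$. The sequence $(A_k)_k$ is decreasing in $\Fil$, so given an arbitrary $\Fil$-stationary $I$, the defining property of \zwakkePfilter s yields a $\Fil$-stationary $B \subseteq I$ with $B \subseteq^\ast A_k$ for every $k$. Since $B \setminus A_k = \{i \in B : \Gamma(i) < k\}$ is then finite for every $k$, the restriction $\Gamma|_B$ has finite preimage on every value, which is the conclusion to show.

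For the converse, let $(A_k)_k$ be a decreasing sequence in $\Fil$ and $I$ a $\Fil$-stationary set. First I replace each $A_k$ by $A_k \setminus \{0,1,\dots,k\}$, which remains in $\Fil$ (freeness), still gives a decreasing sequence, and additionally makes $\bigcap_k A_k = \emptyset$. Hence one can define $\Gamma(i) := \max\{k \in \Nat : i \in A_k\} \in \Nat$ (with the convention $\max \emptyset = 0$). Since $A_{M+1} \in \Fil$ while $\{i : \Gamma(i) \leq M\}$ is disjoint from $A_{M+1}$, the function $\Gamma$ cannot be bounded on any $\Fil$-stationary set. Applying the hypothesis to $\Gamma$ and $I$, I obtain an $\Fil$-stationary $B \subseteq I$ on which $\Gamma$ is finite-to-one. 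Then by the definition of $\Gamma$ and monotonicity of $(A_k)_k$, the inclusion $B \setminus A_k \subseteq \{i \in B : \Gamma(i) < k\}$ holds and its right-hand side is a finite union of finite fibres, so $B \subseteq^\ast A_k$ for every $k$.

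The only delicate point is choosing the right $\Gamma$ in the converse: a naive choice such as $\Gamma(i) = \sup\{k : i \in A_k\}$ may be infinite. The trimming trick $A_k \mapsto A_k \setminus \{0, \ldots, k\}$, which is permitted because $\Fil$ is free, sidesteps this and forces $\Gamma$ to take values in $\Nat$. Everything else reduces to the elementary observation that being $\Fil$-stationary is the set-theoretic negation of having complement in $\Fil$.
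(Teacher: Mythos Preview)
Your proof is correct and follows essentially the same approach as the paper: the forward direction builds the decreasing sequence $A_k=\{i:\Gamma(i)\ge k\}$ and reads off finite-to-oneness from $B\subseteq^\ast A_k$, while the converse trims $A_k$ by removing an initial segment (the paper writes $A'_k=A_k\cap[k,\infty[$) so that $\Gamma(i)=\max\{k:i\in A_k\}$ is well defined, then argues that $\Gamma$ is unbounded on every $\Fil$-stationary set and deduces $B\subseteq^\ast A_k$ from finite-to-oneness. The only differences are cosmetic (strict versus non-strict inequalities, trimming $\{0,\dots,k\}$ versus $\{0,\dots,k-1\}$).
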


\begin{proof}
Suppose first that $\Fil$ is a {\zwakkePfilter} and that $\Gamma: \Nat \to \Nat$ is unbounded on every \mbox{$\Fil$-sta}\-tion\-ary set. Then, for every $k$, the set $A_k:=\{
n\in \Nat: \Gamma(n)>k
\}$ belongs to~$\Fil$.
This implies that for every \mbox{$\Fil$-sta}\-tion\-ary set $I$, there exists an \mbox{$\Fil$-sta}\-tion\-ary set $B\subseteq I$ such that $B\subseteq^{\ast} A_k$ for every $k$. By consequence, $\Gamma$ is finite-to-one on~$B$.\\
Suppose next that $\Fil$ has the property of Lemma~\ref{alternatievekaraktrerisatieP+}. Let $(A_k)_k$ be a decreasing sequence of sets in $\Fil$. Let $A'_k=A_k\cap [k,+\infty[$. Define $\Gamma:\Nat\to\Nat$ as follows:
$$\begin{cases}
\Gamma(n)=\max\{ l: n\in A'_l \}&\text{if } n\in A'_0,\\
\Gamma(n)=0&\text{if }n\notin A'_0.
\end{cases}$$
Now, $\{ n\in \Nat: \Gamma(n)\leq k  \}  \subseteq \Nat \setminus A'_{k+1}$ cannot be \mbox{$\Fil$-sta}\-tion\-ary. Hence, for every \mbox{$\Fil$-sta}\-tion\-ary set $I$, there is an \mbox{$\Fil$-sta}\-tion\-ary set $B\subseteq I$ on which $\Gamma$ is finite-to-one. This implies that $B\subseteq^{\ast} A'_k\subseteq A_k$ for every $k$.         
    \EndProof
\end{proof}

\begin{lemma}$\;$\label{alternatievekaraktrerisatiePx}\\
A filter $\Fil$ on $\Nat$ is a {\sterkePfilter} if and only if for every function $\Gamma: \Nat \to \Nat$ one of the following two statements holds:
\begin{itemize}
\item $\Gamma$ is bounded on some element of $\Fil$,
\item $\Gamma$ is finite-to-one on some \mbox{$\Fil$-sta}\-tion\-ary set.
\end{itemize}
\end{lemma}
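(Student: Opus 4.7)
The plan is to mirror the proof of Lemma~\ref{alternatievekaraktrerisatieP+}, adjusting the quantifier structure to reflect the passage from the weak $P^+$-property to the $P^+$-property: ``bounded on some $\Fil$-stationary set'' gets strengthened to ``bounded on some element of $\Fil$'', and ``finite-to-one on a stationary subset of every stationary set'' gets relaxed to ``finite-to-one on some stationary set''. These two changes mirror exactly the two changes in the definition (we no longer restrict $B$ to a given stationary $I$, and we allow the $A_k$'s to be merely stationary rather than members of $\Fil$).

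For the forward implication, I would take $\Gamma:\Nat\to\Nat$ that is unbounded on every element of $\Fil$ and set $A_k:=\{n\in\Nat:\Gamma(n)>k\}$. Unboundedness gives $A_k\cap F\neq\emptyset$ for every $F\in\Fil$, so each $A_k$ is $\Fil$-stationary, and the sequence is plainly decreasing. Because $\Fil$ is a \sterkePfilter, there is an $\Fil$-stationary $B$ with $B\subseteq^\ast A_k$ for every $k$; since $\{n\in B:\Gamma(n)\leq k\}\subseteq B\setminus A_k$ is finite, $\Gamma$ is finite-to-one on $B$.

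For the converse, I would fix a decreasing sequence $(A_k)_k$ of $\Fil$-stationary sets and truncate to $A'_k:=A_k\cap[k,+\infty[$, which remains stationary (since $[k,+\infty[\in\Fil$) and decreasing. Define $\Gamma(n):=\max\{l:n\in A'_l\}$ for $n\in A'_0$ and $\Gamma(n):=0$ otherwise; the truncation $A'_l\subseteq[l,+\infty[$ bounds the set under the max by $n$, so $\Gamma$ is well-defined. If some $F\in\Fil$ satisfied $\Gamma|_F\leq k$, then $F\cap A'_{k+1}$ would be empty, contradicting stationarity of $A'_{k+1}$. By hypothesis, there is then a stationary $B$ on which $\Gamma$ is finite-to-one, and $B\setminus A'_k\subseteq\{n\in B:\Gamma(n)<k\}$ is finite, whence $B\subseteq^\ast A_k$ for every $k$.

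I do not anticipate a real obstacle; the only subtle point is that the truncation $A_k\mapsto A'_k$ serves the double purpose of forcing $\Gamma$ to take finite values and of allowing the ``bounded on an element of $\Fil$'' branch of the dichotomy to collide with the stationarity of $A'_{k+1}$, which is precisely the reason the two definitional changes line up with the two changes in the dichotomy.
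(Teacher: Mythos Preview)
Your proposal is correct and follows exactly the approach the paper intends: the paper explicitly states that the proof of Lemma~\ref{alternatievekaraktrerisatiePx} is a straightforward modification of the proof of Lemma~\ref{alternatievekaraktrerisatieP+}, and your adaptation carries out precisely that modification, adjusting the two places in the argument where the dichotomy and the definition differ. The only cosmetic quibble is that the inclusion $B\setminus A'_k\subseteq\{n\in B:\Gamma(n)<k\}$ is off by one at $k=0$ (there one needs $\{n\in B:\Gamma(n)=0\}$ instead), but this does not affect the conclusion since $\Gamma$ is finite-to-one on $B$.
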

\bigskip

\noindent
If $I \subseteq \Nat,$ we say that $I' \subseteq I$ is an interval of $I$ whenever $I' = I \cap [a,b)$ for $a,b \in \Nat.$

\begin{definition}$\;$\\
An increasing interval-partition
     of $I$ is a partition of $I$ into intervals $(I_n)_n$ of $I$ such that $\max(I_n) < \min(I_m)$ for each $n<m.$
\end{definition}

\noindent
To a strictly increasing function $f: \mathbb{N} \to I,$ we associate the increasing interval-partition $(I^f_n)_n$ defined by $I^f_n = I \cap (f(n-1), f(n)]$ (where we agree on\linebreak $f(-1)= -1$ for all such~$f$). Clearly, every increasing interval-partition is of the form $(I^f_n)_n$ for a unique strictly increasing function $f:\Nat \to I.$

\begin{definition}$\;$\\
Let $\mathcal{F}$ be a filter on $\mathbb{N}$ and $I\subseteq \Nat$ \mbox{$\Fil$-sta}\-tion\-ary.
Let $(I_n)_n$ be an increasing interval-partition of $I.$ We say that $\mathcal{F}$ is slow with respect to $(I_n)_n$ if every \mbox{$\Fil$-sta}\-tion\-ary $A \subseteq I$ satisfies
$(\exists n \in \mathbb{N})(\, |A \cap I_n| > n).$
\end{definition}

\begin{lemma}$\;$\label{alternatievekaraktrerisatierapid}\\
If $\mathcal{F}$ is a filter on $\mathbb{N}$, then the following two statements are equivalent:
\begin{enumerate}
\item $\Fil$ is rapid$^+$.
\item No \mbox{$\Fil$-sta}\-tion\-ary set $I \subseteq \mathbb{N}$ has an increasing interval-partition $(I_n)_n$ such that $\Fil$ is slow with respect to $(I_n)_n.$
\end{enumerate}
\end{lemma}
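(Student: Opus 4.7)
The plan is to exploit a correspondence between strictly increasing functions $f\colon\Nat\to\Nat$ and increasing interval-partitions $(I_n)_n$ via $f(n)=\max(I_n)+1$.

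\textbf{$(1) \Rightarrow (2)$.} Given an increasing interval-partition $(I_n)_n$ of a \mbox{$\Fil$-sta}\-tion\-ary~$I$, I would set $f(n):=\max(I_n)+1$. This $f$ is strictly increasing by the defining property of such partitions, so rapid$^+$ yields a \mbox{$\Fil$-sta}\-tion\-ary $B\subseteq I$ with $\eta_B\geq f$. Then $\eta_B(n)>\max(I_n)$ for every $n$, whence $|B\cap(I_0\cup\cdots\cup I_n)|\leq n$, and in particular $|B\cap I_n|\leq n$ for every~$n$. Thus $B$ witnesses that $\Fil$ is not slow with respect to $(I_n)_n$.

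\textbf{$(2) \Rightarrow (1)$.} Given \mbox{$\Fil$-sta}\-tion\-ary $I$ and strictly increasing $f$, the naive choice $I_n:=I\cap[f(n-1),f(n))$ combined with (2) would only produce the \emph{cumulative} bound $|B\cap[0,f(n))|\leq T_n:=n(n+1)/2$ rather than the $\leq n$ required to conclude $\eta_B\geq f$. The main obstacle is bridging this per-block-versus-cumulative gap; the plan is to use an exponentially coarser partition so that the resulting quadratic bound on $|B\cap[0,\cdot)|$ still dominates $f$ pointwise.

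Concretely, I would first replace $f$ by a strictly increasing $f'\geq f$ with $f'(m)-1\in I$ for every $m$, constructed inductively by letting $f'(m)$ be the smallest element of $\{i+1:i\in I\}$ that exceeds both $f(m)$ and $f'(m-1)$ (which exists since $I$ is infinite). I then set $I_0:=I\cap[0,f'(1))$ and $I_n:=I\cap[f'(4^{n-1}),f'(4^n))$ for $n\geq 1$; each $I_n$ contains $f'(4^n)-1\in I$, so $(I_n)_n$ is a valid increasing interval-partition of~$I$. Applying (2) to this partition yields a \mbox{$\Fil$-sta}\-tion\-ary $B\subseteq I$ with $|B\cap I_n|\leq n$ for every $n$, and summing gives $|B\cap[0,f'(4^n))|\leq T_n$, i.e.\ $\eta_B(T_n)\geq f'(4^n)$. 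For $m\geq 1$, picking $n\geq 1$ with $T_{n-1}<m\leq T_n$ and invoking the easily-verified inequality $4^{n-1}\geq T_n$ (valid for all $n\geq 1$), one obtains
$$\eta_B(m)\geq\eta_B(T_{n-1})\geq f'(4^{n-1})\geq f'(T_n)\geq f(T_n)\geq f(m).$$
For $m=0$, the stronger bound $|B\cap I_0|\leq 0$ gives $\eta_B(0)\geq f'(1)\geq f(0)$. Hence $\eta_B\geq f$, showing that $\Fil$ is rapid$^+$.
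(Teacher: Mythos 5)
Your proposal is correct and takes essentially the same route as the paper: for $(2)\Rightarrow(1)$ the paper also bridges the per-block-versus-cumulative gap by coarsening the partition, placing the block boundaries at $f(q(n))$ with $q(n)=n(n+1)/2$ so that the cumulative count $\sum_{l\leq r}l=q(r)$ is absorbed exactly, where you use boundaries at $f'(4^n)$ and the inequality $4^{n-1}\geq n(n+1)/2$ instead. The only other (harmless) difference is that you make explicit the reduction ensuring non-empty blocks via $f'$, which the paper handles by simply taking $f:\Nat\to I$.
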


    \begin{proof}$\;$\\
The implication $1. \Rightarrow 2.$ is easily verified, we prove the implication $2. \Rightarrow 1.$\\
        Let $I\subseteq \mathbb{N}$ be \mbox{$\Fil$-sta}\-tion\-ary. Let $f:\mathbb{N}\to I$ be strictly increasing. We prove that there exists a $B\subseteq I$ \mbox{$\Fil$-sta}\-tion\-ary for which $f\leq \eta_B$, given that, for each increasing interval-partition $(I_n)_n$ of $I$, $\mathcal{F}$ is not slow with respect to $(I_n)_n$.\\        Define $\displaystyle q:\mathbb{N}\to\mathbb{N}:q(n)=\frac{n(n+1)}{2}$.\\        Consider the following increasing interval-partition $(I_n)_n$ of $I$:        $$ I_0=[0,f(q(1))]\cap I, \quad I_n=]f(q(n)),f(q(n+1))]\cap I \quad \forall n>0.$$    By assumption $\mathcal{F}$ is not slow with respect to $(I_n)_n$, so we can choose $B\subseteq I$ \mbox{$\Fil$-sta}\-tion\-ary, for which $$(\forall n\in \mathbb{N})\ |B\cap I_n|\leq n.$$It now suffices to show that for such $B\subseteq I$ \mbox{$\Fil$-sta}\-tion\-ary we also have        $$ (\forall n\in \mathbb{N})\  |B\cap [0,f(n)]|\leq n.$$        We can see this as follows. Given $n$, choose $r\in\mathbb{N}$ maximal such that $q(r)\leq n$, then $f(n) < f(q(r+1))$, so $\displaystyle B\cap [0,f(n)] \subseteq \bigcup_{l\leq r}B\cap I_{l},$ but then $$ |B\cap [0,f(n)]| \leq |\bigcup_{l\leq r}B\cap I_{l}| \leq \sum_{l\leq r} l =q(r)\leq n.$$    \EndProof
        \end{proof}
    
\noindent
Every countably generated filter~$\Fil$ on $\mathbb{N}$ is both a rapid$^+$ filter and a \sterkePfilter. We now illustrate these properties with three important examples of filters. The first two examples originate in real analysis.

\begin{example}$\;$\\
The asymptotic density filter~$\Fil_d$ consists of those sets $A\subseteq \Nat$ that satisfy
$$ \lim_{n\to \infty} \frac{ |\{k < n: k\in A \} |}{n}  = 1.$$
It is an elementary matter to check that $\Fil_d$ is a \zwakkePfilter, but not a\linebreak \sterkePfilter, nor a rapid$^+$ filter. The notion of convergence corresponding to this filter is also known as statistical convergence.
\end{example}
    
\begin{example}$\;$\\
    To every $f: \Nat \to \mathbb{R}_{>0}$ with $\lim_{n \to \infty} f(n) = 0$ and $\sum_{n\in \Nat} f(n) = \infty$ corresponds a filter~$\Fil_{\text{sum},f}$ which is dual to the summable ideal
    $$\mathcal{I}_{\text{sum},f} = \{ A\subseteq \Nat : \sum_{n\in A} f(n) <\infty \}.$$
    One checks easily that $\Fil_{\text{sum},f}$ is a \sterkePfilter\ which is not rapid$^+.$
\end{example}

\begin{example}$\;$\\
    Let $b_1,b_2:\Nat\to\Nat$ such that the map $n\mapsto (b_1(n),b_2(n))$ is a bijection from $\Nat$ onto $\Nat\times \Nat$. The filter $\Fil_p\cong \{\Nat \}\times\mathcal{C}$ consists of those sets $A\subseteq\Nat$ that satisfy $$(\forall k\in\Nat)\quad \{b_2(n): n \in A\cap b_1^{-1}\left[\{ k \}\right] \} \in \mathcal{C}.$$
    By noting that a set $A\subseteq\Nat$ is $\Fil_p$-stationary if and only if there exists $k\in\Nat$ such that $ A\cap b_1^{-1}\left[\{ k \}\right]$ is infinite, it is easily seen that $\Fil_p$ is both a \zwakkePfilter\ and a rapid$^+$ filter. However, the sequence $(A_k)_k$ with $$A_k=\{ n\in\Nat: b_1(n)\geq k  \} , $$
    forms a decreasing sequence of $\Fil_p$-stationary sets and witnesses that $\Fil_p$ is not a \sterkePfilter.
\end{example}    

\noindent
All three filters $\Fil_d$, $\Fil_{\text{sum},f}$ and $\Fil_p$ belong to the well-studied class of analytic P-filters, see e.g.~\cite{farah}.

\section{Sufficient conditions for Fr\'echet-UBP-filters}

A natural weaker variation of the uniform \mbox{$\Fil$-bound}\-ed\-ness principle is given by the following stationary uniform \mbox{$\Fil$-bound}\-ed\-ness principle.
\begin{definition}$\;$\\
Let $X,Y$ be topological vector spaces.\\ A sequence $(T_i)_i$ in $\mathcal{L}(X,Y)$ is {\bf{\mbox{$\Fil$-sta}\-tion\-ary-equicontinuous}} if for  every \mbox{$\Fil$-sta}\-tion\-ary set $I \subseteq \mathbb{N}$ and every \mbox{$0$-neigh}\-bour\-hood $V$ in $Y$, there exists a \mbox{$0$-neigh}\-bour\-hood $U$ in $X$ such that:
$$ (\exists J \in \Fil^+, J \subseteq I)(\forall i \in J) \quad T_i[U] \subseteq V.$$
We say that the {\bf stationary uniform \mbox{$\Fil$-bound}\-ed\-ness principle} holds for continuous linear maps from $X$ to $Y$, and denote this by {$UBP^{\text stat}_{\Fil}(X,Y)$}, whenever every pointwise \mbox{$\Fil$-bounded} sequence $(T_i)_i$ in $\mathcal{L}(X,Y)$ is \mbox{$\Fil$-sta}\-tion\-ary-equi\-con\-ti\-nuous.
We say that the stationary uniform \mbox{$\Fil$-bound}\-ed\-ness principle holds for $X$ if {$UBP^{\text stat}_{\Fil}(X,Y)$} holds for every locally convex space~$Y$.
We define $\mathcal{F}$ to be a {\bf {stationary} Fr\'echet-UBP-filter} if the stationary uniform \mbox{$\Fil$-bound}\-ed\-ness principle holds for every Fr\'echet space~$X.$ {\bf {Stationary} Banach-UBP-filters} are defined in the analogous way.
\end{definition}

\noindent
We will find that, in several ways, the stationary uniform \mbox{$\Fil$-bound}\-ed\-ness principle relates to the uniform \mbox{$\Fil$-bound}\-ed\-ness principle as {\zwakkePfilter s} relate to {\sterkePfilter s}.
\bigskip

\noindent
In this section, it will be proved that the previously introduced combinatorial properties for filters can be used to express sufficient conditions for the uniform \mbox{$\Fil$-bound}\-ed\-ness principles to hold for Fr\'echet spaces.
In particular, we will prove the following theorem.

\begin{theorem}\label{Frechetsuf}$\,$
    \vspace{-2pt}
    \begin{enumerate}
        \item[(a)] Every rapid$^+$ {\zwakkePfilter} is a {stationary} Fr\'echet-UBP-filter.
        \item[(b)] Every rapid$^+$ {\sterkePfilter} is a Fr\'echet-UBP-filter.
    \end{enumerate}
\end{theorem}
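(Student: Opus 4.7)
The plan is to prove both (a) and (b) by contrapositive, via a gliding-hump construction producing a vector $x \in X$ at which $(T_i)_i$ fails to be pointwise $\Fil$-bounded, contradicting the hypothesis. Fix a translation-invariant metric on the Fr\'echet space $X$ inducing a decreasing countable base $(U_k)_{k\in\Nat}$ of balanced $0$-neighbourhoods. Suppose $(T_i)_i \in PW_{\Fil}(X,Y)$ and assume the relevant equicontinuity conclusion fails; this yields a closed balanced $0$-neighbourhood $V \subseteq Y$ witnessing the failure, and by rescaling ($T_i[U] \subseteq nV$ iff $T_i[U/n] \subseteq V$), the failure persists at every scalar multiple $nV$. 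For (a), failure of stationary $\Fil$-equi\-con\-ti\-nu\-ity at a given $\Fil$-stationary set $I$ yields, for each $k$, a filter element $G_k \in \Fil$ with $G_k \cap I \subseteq \{i \in \Nat : T_i[U_k] \not\subseteq kV\}$; we may take $(G_k)_k$ decreasing. For (b), the failure yields that $S_k := \{i : T_i[U_k] \not\subseteq kV\}$ is $\Fil$-stationary and decreasing in $k$.

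Next, apply the combinatorial hypotheses. For (a), the \zwakkePfilter{} property applied to $(G_k)_k$ inside $I$ gives a $\Fil$-stationary $B \subseteq I$ with $B \subseteq^{\ast} G_k$ for every~$k$. For (b), the \sterkePfilter{} property applied to $(S_k)_k$ gives a $\Fil$-stationary $B$ with $B \subseteq^{\ast} S_k$ for every~$k$. Then, with $N_k := |B \setminus G_k|$ (resp.\ $|B \setminus S_k|$) finite, rapid$^+$ applied to the strictly increasing function $f(k) := \eta_B(N_k+k)$ delivers a $\Fil$-stationary $B^{*} \subseteq B$ with $\eta_{B^{*}} \geq f$; in particular $i_k := \eta_{B^{*}}(k) \in G_k$ (resp.\ $S_k$), so $T_{i_k}[U_k] \not\subseteq kV$. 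Exploiting the failure at arbitrary multiples $M_k V$, we can even pick $y_k \in U_k$ with $T_{i_k}(y_k) \notin M_k V$ for any prescribed growth rate~$M_k$.

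Set $x := \sum_k y_k$ (with each $y_k$ suitably rescaled) and control the three pieces of $T_{i_k}(x) = T_{i_k}(y_k) + \sum_{j<k} T_{i_k}(y_j) + \sum_{j>k} T_{i_k}(y_j)$ via the Minkowski functional $p_V$. For $j > k$, place $y_j$ inside a very small $U_{m_j}$ chosen after $i_1,\dots,i_k$ are known, so that continuity of each already-fixed $T_{i_k}$ forces $p_V(T_{i_k}(y_j)) \leq 2^{-(k+j)}$ and also guarantees convergence of $\sum_k y_k$ in the Fr\'echet metric. For $j < k$, pointwise $\Fil$-boundedness at $y_j$ supplies $F_j \in \Fil$ and $C_j \geq 0$ with $p_V(T_i(y_j)) \leq C_j$ for all $i \in F_j$, and we arrange $i_k \in \bigcap_{j<k} F_j$, keeping these past cross-terms controlled. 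Choosing $M_k$ to beat the accumulated bound yields $p_V(T_{i_k}(x)) \to \infty$, so $\{i : T_i(x) \notin nV\}$ contains the tail $\{i_k : k \geq n\}$ of $B^{*}$, which is $\Fil$-stationary (and contained in~$I$ in case (a)) for every $n$. Equivalently, $\{i : T_i(x) \in nV\} \notin \Fil$ for every~$n$, contradicting pointwise $\Fil$-boundedness of $(T_i)$ at~$x$.

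The principal obstacle is a circularity between the non-adaptive rapid$^+$ pre-selection of $B^{*}$ and the adaptive dependence of the filter elements $F_j$ on the $y_j$ (hence on the $i_j$). The resolution is to interleave the combinatorial thinning with the hump construction: because the sequence $(\bigcap_{j<k} F_j)_k$ is decreasing in $\Fil$, a second application of the \zwakkePfilter{} (resp.\ \sterkePfilter{}) property, combined with a further rapid$^+$ step, absorbs this additional constraint into~$B^{*}$ without destroying $\Fil$-stationarity, since at each step only finitely many elements of~$B^{*}$ need be discarded. The remaining quantitative verification with $p_V$ is routine bookkeeping.
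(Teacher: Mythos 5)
Your combinatorial preprocessing (extracting a decreasing sequence of stationary sets $S_k$, thinning to $B$ by the (weak) \sterkePfilter{} property, then selecting a stationary set $\{i_k : k\in\Nat\}$ with $i_k\in S_k$ by rapidity$^+$) matches the paper's proof of Theorem~\ref{Frechetbarrelled}, up to a small slip: $f(k)=\eta_B(N_k+k)$ does not force $\eta_{B^{*}}(k)\in S_k$, since the $N_k$ elements of $B\setminus S_k$ need not be initial in $B$; one should take $f(k)>\max(B\setminus S_k)$. The analytic half of your argument, however, has a genuine gap, and it is exactly the one you flag. Controlling the past cross-terms $\sum_{j<k}T_{i_k}(y_j)$ by pointwise $\Fil$-boundedness forces $i_k\in\bigcap_{j<k}F_j$, where $F_j$ depends on $y_j$ and hence on $i_j$; but the (weak) \sterkePfilter{} and rapid$^+$ properties apply only to a sequence of sets given \emph{in advance}, not to one generated adaptively along the very selection you are trying to make. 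Your proposed repair (``a second application \ldots{} only finitely many elements of $B^{*}$ need be discarded at each step'') is not an argument: discarding finitely many elements at each of infinitely many stages can destroy stationarity, and producing a stationary diagonal set against adaptively generated filter sets is a genuine strengthening of the hypotheses that you have not established. The paper sidesteps the issue entirely via Sokal's Lemma~\ref{lemSokal}: since $\sup_{y\in s+rU}q(y)\ge r\sup_{y\in U}q(y)$ for \emph{every} translate $s$, the $k$-th hump can be chosen after the partial sum $s=\sum_{j<k}y_j$ is known so as to make $q_{i_k}$ large at $\sum_{j\le k}y_j$ directly; no pointwise-boundedness control of past terms, hence no adaptive filter sets, is needed, and the limit point is then supplied by Lemma~\ref{nietlegedoorsnedepools}.

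A second gap is specific to non-normable Fr\'echet spaces. Convergence of $\sum_k y_k$ and smallness of $T_{i_j}(y_k)$ for $j<k$ force $y_k$ into a neighbourhood $U_{m_k}$ much smaller than $U_k$, whereas $i_k\in S_k$ only guarantees that $T_{i_k}$ exceeds level $k$ somewhere on $U_k$ (not at an adaptively prescribed level $M_k$, which reintroduces the same circularity). In a non-normable space a continuous seminorm exceeding $k$ on $U_k$ may be arbitrarily small on $U_{m_k}$, so the hump you need may simply not exist; in a Banach space this is harmless by rescaling. This is precisely why the paper isolates Lemmas~\ref{puntsgewijsbegrensddanookopomgeving} and~\ref{puntsgewijsbegrensddanookopomgevingstat}, which use the (weak) \sterkePfilter{} property once more in a separate gliding-hump argument to arrange that the seminorms are bounded on a common neighbourhood for all indices in a filter set (resp.\ a stationary subset); only then does the quantitative Claim, with $4^k\le\sigma_k<\infty$, close the estimate. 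Your proposal would need both of these ingredients to become a proof.
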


\noindent
The theory of uniform boundedness principles for (locally convex) topological vector spaces is traditionally phrased in terms of barrels and barrelled spaces. To stay in line with this case where $\Fil$ is equal to the Fr\'echet filter~$\mathcal{C}$, we first introduce an \mbox{$\Fil$-analogue} for the concept of barrelled space.
Recall that a subset $B$ of a topological vector space~$X$ is a barrel when it is absorbing, balanced, closed and convex (our terminology here is in line with \cite{treves}).

\begin{definition}$\;$\\
An \mbox{$\Fil$-barrel}-system in $X$ is a sequence $(B_i)_i$ consisting of barrels $B_i$ in $X$ with the property that for every $x \in X,$ there is $t>0$ such that
$$tx \in B_i \quad (\forall_{\!\mathcal{F}}\, i \in \Nat).$$
\end{definition}
\smallskip

\noindent
In the following definition, we give the obvious meanings to pointwise \mbox{$\Fil$-bound}\-ed\-ness and \mbox{$\Fil$-equi}\-con\-tin\-uity of a sequence of continuous seminorms.

\begin{definition}$\;$\\
    Let $X$ be a topological vector space and $(p_i)_i$ a sequence of continuous seminorms on $X$, let $(B_i)_i$ be the corresponding sequence of barrels with
    $B_i = \{ x \in X : p_i(x) \leq 1\}.$\\ The sequence $(p_i)_i$ is called pointwise \mbox{$\mathcal{F}$-bounded} if $(B_{i})_i$ is an $\Fil$-barrel-system.\\
    The sequence $(p_i)_i$ is called \mbox{$\Fil$-equi}\-con\-tin\-uous if there exists a \mbox{$0$-neigh}\-bour\-hood $U$ in $X$ such that
    $$U \subseteq B_{i} \quad (\forall_{\!\mathcal{F}}\, i \in \Nat).$$
    The sequence $(p_i)_i$ is called $\Fil$-stationary-equicontinuous if for every \mbox{$\Fil$-sta}\-tion\-ary set $I \subseteq \mathbb{N}$ there exists a \mbox{$0$-neigh}\-bour\-hood $U$ in $X$ and an \mbox{$\Fil$-sta}\-tion\-ary $J \subseteq I$ such that
    $$U \subseteq B_i \quad (\forall i \in J).$$
\end{definition}

\begin{definition}$\;$\\
A topological vector space~$X$ is \mbox{$\Fil$-bar}\-relled if for every \mbox{$\Fil$-barrel}-system $(B_i)_i$ in $X$ there exists a \mbox{$0$-neigh}\-bour\-hood $U$ in $X$ such that
$$U \subseteq B_i \quad (\forall_{\!\mathcal{F}}\, i \in \Nat).$$
\end{definition}

\begin{definition}$\;$\\
A topological vector space~$X$ is {stationarily} \mbox{$\Fil$-bar}\-relled if for every \mbox{$\Fil$-barrel}-system $(B_i)_i$ in $X$ and for every \mbox{$\Fil$-sta}\-tion\-ary set $I,$ there exists a \mbox{$0$-neigh}\-bour\-hood $U$ in $X$ and an \mbox{$\Fil$-sta}\-tion\-ary $J \subseteq I$ such that
$$U \subseteq B_i \quad (\forall i \in J).$$
\end{definition}

\noindent
Note that if $\mathcal{F}$ coincides with the Fr\'echet filter~$\mathcal{C}$ of cofinite sets, the concepts of \mbox{$\Fil$-bar}\-relled space and {stationarily} \mbox{$\Fil$-bar}\-relled space do indeed coincide with the classical notion of barrelled space.\smallskip

\noindent
Just as in the case where $\Fil$ equals the Fr\'echet filter~$\mathcal{C}$, we find that continuous linear maps from \mbox{$\Fil$-bar}\-relled spaces to locally convex spaces satisfy uniform \mbox{$\Fil$-bound}\-ed\-ness principles.

\begin{lemma}$\;$ \label{BarrelleddanUBP}\\
For every \mbox{$\Fil$-bar}\-relled space~$X$ and every locally convex space~$Y$, $UBP_{\Fil}(X,Y)$ holds.\\
For every {stationarily} \mbox{$\Fil$-bar}\-relled space~$X$ and every locally convex space~$Y$,\linebreak \mbox{$UBP^{\text stat}_{\Fil}(X,Y)$} holds.
\end{lemma}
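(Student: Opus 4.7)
The natural approach is to mimic the classical Banach--Steinhaus argument, with every ``for all $i$'' replaced by the generalised quantifier $(\forall_{\!\mathcal{F}}\, i \in \mathbb{N})$. The plan is to turn a pointwise $\Fil$-bounded sequence of operators into an $\Fil$-barrel-system of preimages, apply ($\Fil$-)barrelledness to that system, and push the resulting $0$-neighbourhood through the operators.

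Concretely, I would fix a pointwise $\Fil$-bounded sequence $(T_i)_i$ in $\mathcal{L}(X,Y)$ and an arbitrary $0$-neighbourhood $V$ in $Y$. Using local convexity of $Y$, I would shrink $V$ to a closed, balanced, convex $0$-neighbourhood $V' \subseteq V$, and set $B_i := T_i^{-1}[V']$. The key verification is that $(B_i)_i$ is an $\Fil$-barrel-system in $X$: each $B_i$ is closed (by continuity of $T_i$), balanced and convex (by linearity), and absorbing (since $T_i^{-1}[V']$ is itself a $0$-neighbourhood), so it is a barrel; and for every $x \in X$, pointwise $\Fil$-boundedness of $(T_i)_i$ applied to the $0$-neighbourhood $V'$ yields some $t>0$ with $t T_i(x) \in V'$ for $(\forall_{\!\mathcal{F}}\, i \in \mathbb{N})$, i.e.\ $tx \in B_i$ for $(\forall_{\!\mathcal{F}}\, i \in \mathbb{N})$.

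With this reduction in hand, $\Fil$-barrelledness of $X$ directly supplies a $0$-neighbourhood $U$ with $U \subseteq B_i$, i.e.\ $T_i[U] \subseteq V' \subseteq V$, for $(\forall_{\!\mathcal{F}}\, i \in \mathbb{N})$. Since $V$ was arbitrary, $(T_i)_i$ is $\Fil$-equicontinuous, proving $UBP_\Fil(X,Y)$. For the stationary statement I would perform the same preimage construction (it is insensitive to the filter) and, given an $\Fil$-stationary set $I$, invoke stationary $\Fil$-barrelledness to produce a $0$-neighbourhood $U$ and an $\Fil$-stationary $J\subseteq I$ with $U\subseteq B_i$ for all $i\in J$, whence $T_i[U]\subseteq V$ for all $i\in J$, which is exactly $\Fil$-stationary-equicontinuity.

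I do not anticipate any genuine obstacle: the whole argument is the classical barrelledness-implies-UBP proof, and the filter plays only the bookkeeping role of being transported through each step via the fixed quantifier $(\forall_{\!\mathcal{F}}\, i \in \mathbb{N})$. The sole essential use of the hypotheses on $Y$ is the passage from $V$ to the closed balanced convex $V'$, which requires local convexity; without it, the preimages $B_i$ might fail to be barrels.
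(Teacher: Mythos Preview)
Your proposal is correct and follows essentially the same route as the paper: form the $\Fil$-barrel-system of preimages $T_i^{-1}[V']$ of a barrel $V'$ in $Y$, then apply ($\Fil$- or stationary $\Fil$-)barrelledness of $X$ to extract the required $0$-neighbourhood. The paper's own proof is the same argument with the roles of the letters $U$ and $V$ swapped, and it likewise remarks that the stationary case is analogous.
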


\begin{proof}
We prove the first statement, the proof of the second statement is analogous, and both proofs are entirely analogous to the proof of the uniform boundedness principle for barrelled spaces.\\
Let $(T_i)_i$ be a pointwise \mbox{$\mathcal{F}$-bounded} sequence in $\mathcal{L}(X,Y)$. Let $U$ be an arbitrary \mbox{$0$-neigh}\-bour\-hood in $Y$, we may assume that $U$ is a barrel. Because every $T_i$ is linear and continuous, $T_i^{-1}[U]$ is a barrel for every $i$. By pointwise \mbox{$\Fil$-bound}\-ed\-ness, $(T_i^{-1}[U])_i$ is an \mbox{$\Fil$-barrel}-system. Using \mbox{$\Fil$-bar}\-relled\-ness, we find some \mbox{$0$-neigh}\-bour\-hood $V$ in $X$ such that $(\forall_{\!\mathcal{F}}\,  i\in \mathbb{N})$ $V\subseteq T_i^{-1}[U]$, so we have proved \mbox{$\Fil$-equi}\-con\-tin\-uity of $(T_i)_i$.\EndProof
\end{proof}

\begin{remark}
It follows from the given definitions that if $X$ is a barrelled space, then $X$ is \mbox{$\Fil$-bar}\-relled if and only if the uniform \mbox{$\Fil$-bound}\-ed\-ness principle for seminorms holds for $X$, i.e.\ every pointwise \mbox{$\Fil$-bounded} sequence of continuous seminorms on $X$ is \mbox{$\Fil$-equi}\-con\-tin\-uous.
Analogously, if $X$ is barrelled, then $X$ is {stationarily} \mbox{$\Fil$-bar}\-relled if and only if the stationary uniform \mbox{$\Fil$-bound}\-ed\-ness principle for seminorms holds for $X$, i.e.\ every pointwise \mbox{$\Fil$-bounded} sequence of continuous seminorms on $X$ is \mbox{$\Fil$-sta}\-tion\-ary-equicontinuous.
\end{remark}

\noindent
In Theorem~\ref{Frechetbarrelled} below, we will prove that if $\Fil$ is a rapid$^+$ (weak) {\sterkePfilter}, then every Fr\'echet space is ({stationarily}) \mbox{$\Fil$-bar}\-relled. Our proof of Theorem~\ref{Frechetbarrelled} uses and extends several ideas found in the elementary proof for the uniform boundedness principle given by Sokal in \cite{sokal}. In this proof, we will repeatedly make use of a general non-empty intersection principle for completely metrizable topological vector spaces which we now describe first.\\
Our situation of interest will be the following. Suppose given a completely metrizable topological vector space~$X$ together with two countable bases $(V_n)_n$ and $(A_n)_n$ for the filter of \mbox{$0$-neigh}\-bour\-hoods of $X.$ Suppose that for all integers $l\geq 0$, $n \geq {1}:$
\begin{equation}
A_n + A_{n+1} + \ldots + A_{n+l} \subseteq V_n.
\tag{L1}
\end{equation}

\noindent
We will additionally assume that all neighbourhoods $V_n$ are closed.\hfill (L2)\label{eqn:L2}
\\Now, let $(C_n)_n$ be a sequence of subsets of $X$ and suppose that {we} wish to show that the intersection $\bigcap_{n\geq {1}} (C_n + V_{n+1})$ is non-empty. The following lemma provides a means of deriving this conclusion, at least when the sequences $(C_n)_n$ and $(A_n)_n$ have the following property:
\begin{align}
\begin{split}
&C_{1} \neq \emptyset.\\
&(\forall n\geq {2})(\forall x \in C_{n-1})((x+A_n) \cap C_{n} \not = \emptyset).
\end{split}
\tag{L3}
\end{align}
We call a sequence $(C_n)_n$ with this property $(A_n)_n$-connected.

\begin{lemma}$\;$ \label{nietlegedoorsnedepools}\\
If $X$ is a completely metrizable topological vector space with $(V_n)_n,$ $(A_n)_n$ two bases for the filter of \mbox{$0$-neigh}\-bour\-hoods of $X$ that satisfy (L1) and (L2), {then} the intersection $\bigcap_{n\geq {1}} (C_n + V_{n+1})$ is non-empty for every  sequence $(C_n)_n$ of subsets of $X$ that is $(A_n)_n$-connected (i.e.\ that satisfies (L3)).
\end{lemma}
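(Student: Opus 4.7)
The plan is to construct a Cauchy sequence by walking along the $C_n$'s using the connectedness hypothesis, and then use the closedness condition (L2) to see that the limit lies in each $C_n + V_{n+1}$.

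First, I would use (L3) to build a sequence $(x_n)_{n \geq 1}$ with $x_n \in C_n$ for every $n$ and such that the consecutive differences are small in a precise sense. Concretely, pick any $x_1 \in C_1$; then, assuming $x_{n-1} \in C_{n-1}$ has been constructed, the $(A_n)_n$-connectedness gives some $x_n \in C_n$ with $x_n - x_{n-1} \in A_n$. This is a routine inductive construction.

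Next, I would argue that $(x_n)_n$ is Cauchy. For $m > n \geq 1$, telescoping yields
$$x_m - x_n = \sum_{k=n+1}^{m} (x_k - x_{k-1}) \in A_{n+1} + A_{n+2} + \cdots + A_m,$$
which by (L1) lies in $V_{n+1}$. Since $(V_n)_n$ is a basis for the $0$-neighbourhood filter of $X$, for any given $0$-neighbourhood $W$ we can find $n_0$ with $V_{n_0} \subseteq W$, and then for this particular $n = n_0 - 1$ all differences $x_m - x_n$ with $m>n$ lie in $W$; a standard balanced-neighbourhood / triangle-inequality argument upgrades this into the usual Cauchy condition. By complete metrizability of $X$, the sequence converges to some $x \in X$.

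Finally, I would verify that $x \in C_n + V_{n+1}$ for every $n \geq 1$. Fix $n$. For every $m > n$ we have $x_m - x_n \in V_{n+1}$ by the computation above; passing to the limit $m \to \infty$ and invoking (L2) (closedness of $V_{n+1}$) yields $x - x_n \in V_{n+1}$. Hence $x = x_n + (x - x_n) \in C_n + V_{n+1}$, as required.

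The argument is essentially a careful rerun of the standard completeness-plus-nested-closed-sets proof, so I do not expect a serious obstacle. The only subtlety worth flagging is that the bases $(V_n)_n$ and $(A_n)_n$ are not assumed to be decreasing, so when turning the inclusion $x_m - x_n \in V_{n+1}$ into a genuine Cauchy condition one must quote the definition of a neighbourhood basis and use a balanced $W$ with $W - W$ contained in the given $0$-neighbourhood; this is the only place where one has to think rather than compute.
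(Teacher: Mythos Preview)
Your proposal is correct and follows essentially the same approach as the paper: construct the sequence $(x_n)_n$ via (L3), telescope and apply (L1) to obtain $x_m-x_n\in V_{n+1}$, conclude convergence by completeness, and use (L2) to pass to the limit inside $V_{n+1}$. The only difference is that you are more explicit about the Cauchy step (noting that the bases are not assumed decreasing and that a $W-W\subseteq U$ argument is needed), whereas the paper simply asserts convergence from $x_{k+l}-x_k\in V_{k+1}$; your extra care is justified and the paper's applications in fact use decreasing bases anyway.
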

\begin{proof}
We recursively construct a sequence $(x_k)_k$ in $X$.\\ Choose $x_{1}\in C_{1}$ arbitrary. For $k\geq {2}$, choose $x_k\in (x_{k-1}+A_k)\cap C_k$, it follows from (L3) that this is always  possible.
From this construction, it follows that
$$x_{k+l} - x_k \in A_{k+1} + \ldots + A_{k+l} \subseteq V_{k+1}$$
for every $l \in \Nat$ and every $k \geq {1}$.\\
Because $(V_k)_k$ is a \mbox{$0$-neigh}\-bour\-hood base for the complete space~$X$, we find that $x_k \to x$ for some $x\in X$.
Next, $(V_k)_k$ consisting of closed sets, we can take limits in $ x_{k+l} - x_k \in  V_{k+1}$ to find that $x\in x_k+V_{k+1}$ for every $k \geq {1}$.\\ Hence, $x\in\displaystyle \bigcap_{k\geq {1}} (x_k+V_{k+1})\subseteq \bigcap_{k\geq {1}} (C_k + V_{k+1}).$
\EndProof
\end{proof}

\noindent
We will also need the following two lemmas that both concern the local behaviour of seminorms.

\begin{lemma}[\cite{sokal}]$\;$ \label{lemSokal} \\
Let $X$ be a vector space and $U$ a symmetric subset (i.e., $U=-U$) of $X.$\\
If $p$ is a seminorm on $X$, then
$$\sup_{y \in x+ rU} p(y) \geq r \sup_{y \in U} p(y)\qquad \forall x\in X,\forall r\in \mathbb{R}_{>0} .$$
\end{lemma}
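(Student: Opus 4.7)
The plan is to exploit the symmetry $U = -U$ together with the triangle inequality for $p$, in order to bound $rp(u)$ by the maximum of $p$ on $\{x+ru, x-ru\}$ for each $u \in U$. Both of these points lie in $x+rU$, so taking a supremum over $u \in U$ will then yield the result.

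Concretely, fix $x \in X$, $r > 0$ and $u \in U$. First I would note that $x+ru \in x+rU$, and that $x-ru = x + r(-u) \in x + rU$ as well, since $-u \in U$ by symmetry. Next I would apply the triangle inequality for the seminorm $p$ to the identity $2ru = (x+ru) - (x-ru)$, giving
$$ 2r\,p(u) \;=\; p(2ru) \;\le\; p(x+ru) + p(x-ru) \;\le\; 2\sup_{y \in x+rU} p(y). $$
Dividing by $2$ shows that $\sup_{y \in x+rU} p(y) \ge r\,p(u)$, and taking the supremum over $u \in U$ on the right-hand side completes the proof.

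The only slightly non-obvious step is the second one: one has to spot that combining $x+ru$ and $x-ru$ (rather than, say, $x+ru$ alone) is exactly what turns the seminorm estimate into a statement about $p(u)$, by causing the $x$-terms to cancel. This is the whole point of requiring $U$ to be symmetric, and is the main (small) obstacle; once it is identified, everything else is routine seminorm manipulation. I do not expect any further difficulty, as no assumption of finiteness on $\sup_{y\in U}p(y)$ is needed (both sides are allowed to be $+\infty$).
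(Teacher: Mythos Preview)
Your proof is correct and is essentially identical to the paper's argument: both exploit the symmetry of $U$ to place $x\pm ru$ in $x+rU$, then apply the triangle inequality to $(x+ru)-(x-ru)=2ru$ and take the supremum over $u\in U$. The only cosmetic difference is that the paper writes the chain of inequalities starting from $\sup_{y\in x+rU}p(y)$ and working downward, whereas you start from $2r\,p(u)$ and work upward.
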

\begin{proof}
We can argue as follows:
\begin{align*}
\sup_{y \in x+ rU} p(y) &\geq  \frac{1}{2}\sup_{y \in U}( p(x+ry) + p(x-ry))\\
&\geq \frac{1}{2} \sup_{y \in U} p((x+ry) - (x-ry)) = r \sup_{y \in U} p(y).
\end{align*}
\EndProof
\end{proof}

\noindent
If $p$ is a seminorm on a vector space~$X,$ the following notation will be employed to denote the closed unit seminorm-ball defined by $p:$
$$B_p := \{ x \in X : p(x) \leq 1\} .$$

\begin{lemma}$\;$\label{normengeljkgroot}\\
Let $X$ be a vector space, and $p, q_1,\ldots,q_k$ seminorms on $X$.\\
If $q_1,\ldots,q_k$ are all unbounded on $B_p$, then $\min(q_1,\ldots,q_k)$ is unbounded on $B_p$.
\end{lemma}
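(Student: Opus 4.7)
The plan is to prove the contrapositive by induction on $k$: \emph{if $\min(q_1, \ldots, q_k)$ is bounded by some constant $C$ on $B_p$, then some $q_j$ is bounded on $B_p$}. The case $k = 1$ is immediate. This reformulation is forced on us because $\min$ of seminorms is not itself a seminorm, so one cannot simply invoke Lemma~\ref{lemSokal} on $\min_j q_j$ directly.

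For the inductive step, suppose $\min(q_1, \ldots, q_k) \leq C$ on $B_p$. If $q_1$ happens to be bounded on $B_p$, we are done, so I assume $q_1$ is unbounded and pick $y \in B_p$ with $q_1(y) > 3C$. Since $\min_j q_j(y) \leq C < q_1(y)$, there is some index $j_0 \geq 2$ with $q_{j_0}(y) \leq C$; in particular $\min_{j \geq 2} q_j(y) \leq C$.

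The key step is a midpoint argument on the convex, symmetric set $B_p$. For an arbitrary $x \in B_p$ with $q_1(x) \leq C$, the midpoint $z := (x+y)/2$ lies in $B_p$ and satisfies
$$ q_1(z) \geq \frac{q_1(y) - q_1(x)}{2} > \frac{3C - C}{2} = C, $$
so $\min_{j \geq 2} q_j(z) \leq C$ and hence $q_j(z) \leq C$ for some $j \geq 2$. Writing $x = 2z - y$ and applying the triangle inequality, $q_j(x) \leq 2 q_j(z) + q_j(y) \leq 2C + q_j(y)$. Setting $\tilde C := 2C + \max_{2 \leq j \leq k} q_j(y)$, we conclude $\min_{2 \leq j \leq k} q_j(x) \leq \tilde C$ for every such $x$. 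For points $x \in B_p$ with $q_1(x) > C$, the hypothesis $\min_j q_j(x) \leq C$ already forces $\min_{2 \leq j \leq k} q_j(x) \leq C \leq \tilde C$. Thus $\min(q_2, \ldots, q_k) \leq \tilde C$ on all of $B_p$, so by the induction hypothesis some $q_j$ with $j \geq 2$ is bounded on $B_p$, completing the step.

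The main obstacle I anticipate is simply spotting the right setup: one must switch to the contrapositive and aim to eliminate one of the seminorms from the cover $B_p \subseteq \bigcup_j \{q_j \leq C\}$. Once that is done, the convexity of $B_p$ and the triangle inequality (essentially the same mechanism that produces Lemma~\ref{lemSokal}) let a single point $y$ on which $q_1$ is large control the behaviour of $\min_{j \geq 2} q_j$ uniformly over $B_p$, which reduces $k$ to $k-1$.
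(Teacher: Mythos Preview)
Your proof is correct and takes a genuinely different route from the paper's. The paper argues directly: from each $q_i$ being unbounded on $B_p$ it extracts a sequence $(x_n^i)_n$ with $p(x_n^i)\to 0$ and $q_i(x_n^i)\ge 1$, then fixes a free ultrafilter $\mathcal{U}$ and defines, for each $i$, the subspace $V_i=\{r\in\mathbb{R}^k:\lim_{\mathcal{U},n} q_i(\sum_j r_j x_n^j)=0\}\subsetneq\mathbb{R}^k$; since $\mathbb{R}^k$ is not a union of $k$ proper subspaces, some coefficient vector $r\notin\bigcup_i V_i$ exists, and the single sequence $z_n=\sum_j r_j x_n^j$ then witnesses unboundedness of $\min_i q_i$ on $B_p$. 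Your approach is more elementary---no ultrafilters, no covering-by-subspaces fact---and proceeds by induction on $k$, using only convexity of $B_p$ and the triangle inequality in a Sokal-style midpoint estimate to peel off one seminorm at a time. The paper's argument is non-inductive and produces in one stroke a concrete sequence along which all $q_i$ stay simultaneously bounded away from~$0$, which is slightly more information than needed; for the lemma as stated, your argument is cleaner and avoids the auxiliary machinery entirely.
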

\begin{proof}
Because each of the seminorms $q_i$ is unbounded on $B_p,$ there exists for every $i \in \{1, \ldots, k\}$ a sequence $(x_n^i)_n$ with $q_i(x_n^i)\geq 1$ and  $x_n^i\stackrel{p}{\to} 0$ as $n \to \infty$.\\
Now, as an auxiliary means for the rest of the proof, fix an arbitrary free ultrafilter $\Ul \subseteq P(\Nat).$
Consider for every $i$ the finite-dimensional vector space $V_i\leq \mathbb{R}^k$ defined by
$$V_i:=\{  (r_1,\ldots,r_k) \in \mathbb{R}^k : \displaystyle \lim_{\Ul,n} q_i(r_1x_n^1+\ldots+r_kx_n^k)=0 \}.$$ Since the vector $(0,\ldots,0,1,0,\ldots,0)$ (with $1$ on place $i$) is not contained in $V_i$, every $V_i$ is a proper subspace of $\mathbb{R}^k$. It follows that $\displaystyle \mathbb{R}^k\neq \bigcup_{i=1}^k V_i$, so we can select $(r_1,\ldots,r_k)\in \displaystyle\bigcap_{i=1}^k V_i^c$. \\
Set $z_n:=r_1x_n^1+\ldots+r_kx_n^k$. From the choice of the scalars $(r_1,\ldots,r_k)$ and the definition of an ultrafilter limit, it follows that there exists $\varepsilon >0$ and $U\in \Ul$ such that for every $i \in \{1, \ldots, k\}$ and every $n \in U$,  $q_i(z_n)>\varepsilon$.\\
Thus, $\min(q_1,\ldots,q_k)(z_n)>\varepsilon$ for every $n \in U$, while $z_n\stackrel{p}{\to} 0$ as $n \to \infty.$ Hence, $\min(q_1,\ldots,q_k)$ is unbounded on $B_p$.
\EndProof
\end{proof}

\begin{lemma}$\;$\label{puntsgewijsbegrensddanookopomgeving}\\
If $X$ is a Fr\'echet space, $\Fil$ a {\sterkePfilter} and $(q_i)_i$ a pointwise \mbox{$\Fil$-bounded} sequence of continuous seminorms on $X,$ then there exists a \mbox{$0$-neigh}\-bour\-hood $U$ in $X$ and $F \in \Fil$ such that $q_i$ is bounded on $U$ whenever $i \in F$ (with bounds possibly depending on $i$).
\end{lemma}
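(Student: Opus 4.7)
\smallskip

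\noindent
My plan is to apply Lemma~\ref{alternatievekaraktrerisatiePx} to an appropriate function $\Gamma$. Let $(U_n)_n$ be a decreasing base of closed, convex, balanced $0$-neighbourhoods in $X$. Since every continuous seminorm is bounded on some $0$-neighbourhood, the quantity
$$\Gamma(i):=\min\{n\in\Nat:q_i\text{ is bounded on }U_n\}$$
defines a function $\Gamma:\Nat\to\Nat$. If $\Gamma$ is bounded by $N$ on some $F\in\Fil$, then $U_N$ and $F$ witness the conclusion; so I will focus on the remaining case from Lemma~\ref{alternatievekaraktrerisatiePx}, namely that $\Gamma$ is finite-to-one on some $\Fil$-stationary set $S$.

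\smallskip

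\noindent
In this case I aim to derive a contradiction with the pointwise $\Fil$-boundedness assumption by constructing $x\in X$ such that $q_i(x)\to\infty$ as $i\to\infty$ through $S$. This will suffice: for each $C>0$, the set $\{i\in S:q_i(x)\le C\}$ will be finite, so $\{i\in S:q_i(x)>C\}$ is cofinite in $S$. Since $\Fil$ is free and $S$ is $\Fil$-stationary, every $F\in\Fil$ meets $S$ in an infinite set (otherwise $F\setminus S\in\Fil$ would be disjoint from $S$), hence also meets $\{i\in S:q_i(x)>C\}$; so $\{i:q_i(x)>C\}$ is $\Fil$-stationary for every $C$, contradicting the existence of $C_x$ with $\{i:q_i(x)\le C_x\}\in\Fil$.

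\smallskip

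\noindent
The construction of $x$ proceeds stage-by-stage along the (finite) level sets $L_n:=\Gamma^{-1}(n)\cap S$. I choose $\alpha_n>0$ with $\sum_n\alpha_n<\infty$ and set $A_n:=\alpha_n U_n$, $V_n:=\beta_n U_n$ where $\beta_n:=\sum_{j\ge n}\alpha_j$; conditions (L1) and (L2) are easily verified, so Lemma~\ref{nietlegedoorsnedepools} applies. At stage $n$ I choose $y_n\in A_n$ and set $x_n:=x_{n-1}+y_n$. The key observation is that for every $i\in L_k$, $q_i$ is bounded on $U_k$ (say by $E_i$) but unbounded on $U_{k-1}$; hence $q_i$ is bounded on $A_n$ by $\alpha_n E_i$ when $n\ge k$, and unbounded on $A_n$ when $n<k$. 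Applying Lemma~\ref{normengeljkgroot} with $p$ the Minkowski functional of $A_n$ (so that $A_n=B_p$) shows that $\min_{i\in L_{n+1}}q_i$ is unbounded on $A_n$; a translation argument (choose $y_n\in A_n$ with $\min_{i\in L_{n+1}}q_i(y_n)\ge M_n+\max_{i\in L_{n+1}}q_i(x_{n-1})$) then yields $q_i(x_n)\ge M_n$ for every $i\in L_{n+1}$. Choosing $M_n$ large enough to compensate for the tail $q_i(x-x_n)\le E_i\beta_{n+1}$, Lemma~\ref{nietlegedoorsnedepools} produces $x:=\lim x_n\in X$ with $q_i(x)\ge n$ for every $i\in L_{n+1}$, so $q_i(x)\to\infty$ along $S$ as required.

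\smallskip

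\noindent
The main obstacle is that a naive Sokal-style construction handles only one seminorm per stage, yielding a countable ``large'' set which need not be $\Fil$-stationary. Processing each level $L_{n+1}$ as a whole via the min-extension of Lemma~\ref{normengeljkgroot}, together with pre-choosing summable perturbation sizes $\alpha_n$ so that later stages cannot undo earlier progress, is what makes the simultaneous control possible.
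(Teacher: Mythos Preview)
Your proposal is correct and follows essentially the same route as the paper's proof: define the level function $\Gamma$ (the paper calls it $l$), apply the $P^+$-dichotomy of Lemma~\ref{alternatievekaraktrerisatiePx}, and in the finite-to-one case use Lemma~\ref{normengeljkgroot} on each finite level set together with the intersection principle of Lemma~\ref{nietlegedoorsnedepools} to build a point where the $q_i$ blow up along the stationary set. The only cosmetic differences are that the paper works with a seminorm base $(p_m)_m$ and fixed dilation factors $3^{-k}$, packages the recursive choice into explicit sets $C_k$, and handles level $k$ at stage $k$ rather than your $L_{n+1}$ at stage $n$; none of this affects the substance.
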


\begin{proof}
Fix a base $(p_m)_m$ of continuous seminorms for $X$ with $p_m \leq p_{m+1}$ for every $m.$
Because every $q_i$ is a continuous seminorm on $X,$ there exists for every~$i$ a corresponding integer $m$ such that $q_i$ is bounded on the seminorm-ball $B_{p_m}.$ As a consequence, the following function is well defined:
$$l: \Nat \to \Nat: l(i) = \min\{m\in \Nat: q_i \text{ is bounded on } B_{p_{m+1}} \}.$$
The assertion of the lemma is established if we can show that $l$ is bounded on an element of $\Fil.$ Because $\Fil$ is a {\sterkePfilter}, we can apply Lemma~\ref{alternatievekaraktrerisatiePx} to find that it suffices to prove that the function $l$ is not finite-to-one on any \mbox{$\Fil$-sta}\-tion\-ary set.\\
Striving for contradiction, let's suppose that $l$ is in fact finite-to-one on the \mbox{$\Fil$-sta}\-tion\-ary set $I.$
For every integer $k\geq 0$, set
$$ \sigma_k := \sup \{ q_i(x) : i \in I, l(i) = k \text{ and } x\in B_{p_{k+1}}   \}.$$
It follows from the definition of $l(i)$ together with $l$ being finite-to-one on $I,$ that $\sigma_k < \infty$.\\
Next, we wish to apply Lemma~\ref{nietlegedoorsnedepools} to the system $(A_k)_k$, $(V_k)_k$, $(C_k)_k$ defined by:
\begin{align*}
A_k&=\{ x\in X : p_k(x) \leq 3^{-k}  \}\\
V_k&= \{ x\in X : p_k(x) \leq \frac{3}{2} 3^{-k} \}\\
C_k&= \{x \in X: (\forall i \in I)(l(i)=k \Rightarrow q_i(x) \geq k + \frac{3\sigma_k}{2}) \}.
\end{align*}
It is easily verified that the filter bases $(A_k)_k$ and $(V_k)_k$ indeed satisfy the conditions (L1) and (L2). Before checking (L3), note that for every $k \geq 1,$ the set $\{q_i : l(i)=k, i \in I \}$ contains finitely many seminorms which are all unbounded on $B_{p_{k}}$.  Hence, it follows directly from Lemma~\ref{normengeljkgroot} that $C_k$ is non-empty. We prove that in addition $(y+A_k)\cap C_k$ is non-empty for every $k \geq 2$ and every $y\in X$, so that $(C_k)_k$ is indeed $(A_k)_k$-connected. Given $y \in X$ and $k\geq 2$ it follows from a second application of Lemma~\ref{normengeljkgroot} that there is $x \in \frac{1}{3^k}B_{p_{k}}$ such that for every $i \in I$ with $l(i)=k$ the following is true
$$q_i(x) \geq k + \frac{3}{2}\sigma_k + \max\{q_j(y) : j \in I, l(j) =k\}.$$ Then $x + y$ belongs to $(y+A_k)\cap C_k.$
It follows that we can apply Lemma~\ref{nietlegedoorsnedepools} and find that there exists $x \in \bigcap_{k \geq 1} (C_k + V_{k+1}).$
Then for every $i\in I$ and $k \geq 1$ with $l(i) = k$, we have:
\begin{align*}
q_i(x) &\geq \inf\{q_i(y) : y \in C_k \} - \sup\{q_i(y) : y \in V_{k+1} \}\\ &\geq k + \frac{3\sigma_k}{2} - \sup\{q_i(y) : y \in V_{k+1} \}\\ &\geq k + \frac{3\sigma_k }{2}(1-3^{-k-1})\to \infty,
\end{align*}
as $k \to \infty.$  Because $l$ is also finite-to-one on $I$, it follows that $(q_i(x))_i$ tends to~$+\infty$ on the \mbox{$\Fil$-sta}\-tion\-ary set $I$  and consequently that $(q_i)_i$ can not be pointwise \mbox{$\Fil$-bounded}.
\EndProof
\end{proof}

\begin{lemma}$\;$\label{puntsgewijsbegrensddanookopomgevingstat}\\
If $X$ is a Fr\'echet space, $\Fil$ a {\zwakkePfilter} and $(q_i)_i$ a pointwise \mbox{$\Fil$-bounded} sequence of continuous seminorms on $X,$ then for every \mbox{$\Fil$-sta}\-tion\-ary set $I$, there exists a \mbox{$0$-neigh}\-bour\-hood $U$ in $X$ and an \mbox{$\Fil$-sta}\-tion\-ary subset $J$ of $I$ such that $q_i$ is bounded on $U$ whenever $i \in J$ (with bounds possibly depending on $i$).
\end{lemma}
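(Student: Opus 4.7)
The plan is to mimic the proof of Lemma~\ref{puntsgewijsbegrensddanookopomgeving}, but in a form relativised to the prescribed $\Fil$-stationary set $I$, now relying on the \zwakkePfilter{} characterisation from Lemma~\ref{alternatievekaraktrerisatieP+} in place of its stronger $P^+$-counterpart. Fixing a base $(p_m)_m$ of continuous seminorms for $X$ with $p_m\le p_{m+1}$, I would define again
$$l:\Nat\to\Nat: l(i)=\min\{m\in\Nat: q_i \text{ is bounded on } B_{p_{m+1}}\},$$
and reduce the lemma to showing that $l$ is bounded on some $\Fil$-stationary subset of $I$, since we can then take $U=B_{p_{m+1}}$ for a uniform bound $m$.

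The main obstacle is that the dichotomy of Lemma~\ref{alternatievekaraktrerisatieP+} is \emph{global}: it says that $l$ is either bounded on \emph{some} $\Fil$-stationary set (not a priori contained in $I$) or finite-to-one on an $\Fil$-stationary subset of every $\Fil$-stationary set. The first alternative does not by itself locate the stationary set inside $I$. To bridge this gap I would apply the dichotomy not to $l$ itself but to the modified function $\tilde l:\Nat\to\Nat$ defined by $\tilde l(i)=l(i)$ for $i\in I$ and $\tilde l(i)=i$ for $i\notin I$. The point of the modification is that $\tilde l$ is injective outside $I$, so any set on which $\tilde l$ is bounded must be almost contained in $I$.

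Lemma~\ref{alternatievekaraktrerisatieP+} applied to $\tilde l$ then leaves two cases. In the first, $\tilde l$ is bounded by some $M$ on an $\Fil$-stationary $J$; injectivity of $\tilde l$ off $I$ forces $J\setminus I\subseteq\{0,\ldots,M\}$, so $J\cap I$ remains $\Fil$-stationary (as $\Fil$ is free, removing finitely many elements preserves stationarity), and on this set $l=\tilde l\le M$, delivering the conclusion with $U=B_{p_{M+1}}$ and the $\Fil$-stationary subset $J\cap I\subseteq I$. In the second case, applying the alternative to $I$ yields an $\Fil$-stationary $I'\subseteq I$ on which $\tilde l=l$ is finite-to-one. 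This is precisely the configuration that the proof of Lemma~\ref{puntsgewijsbegrensddanookopomgeving} turns into a contradiction: repeating the construction of $\sigma_k$, $A_k$, $V_k$, $C_k$ with $I'$ in place of $I$ and invoking Lemmas~\ref{nietlegedoorsnedepools} and~\ref{normengeljkgroot} produces some $x\in X$ with $q_i(x)\to\infty$ along $I'$, contradicting the pointwise $\Fil$-boundedness of $(q_i)_i$ (for any $C$, the set $\{i:q_i(x)\le C\}\in\Fil$ would force its intersection with the $\Fil$-stationary set $I'$ to be infinite, whereas $l$ being finite-to-one on $I'$ makes this intersection finite). Hence the second case is excluded, the first case must occur, and the lemma follows.
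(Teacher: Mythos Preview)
Your proposal is correct and matches the paper's proof almost verbatim: the paper defines the same auxiliary function (there denoted $l_I$, your $\tilde l$), applies Lemma~\ref{alternatievekaraktrerisatieP+} to it, handles the bounded case by the same ``almost contained in $I$'' observation, and in the remaining case invokes the construction from Lemma~\ref{puntsgewijsbegrensddanookopomgeving} with the stationary subset of $I$ in the role of $I$ to reach the contradiction. The only differences are cosmetic (notation and the level of detail you give in the final parenthetical).
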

\begin{proof}
As in Lemma~\ref{puntsgewijsbegrensddanookopomgeving},
$$l: \Nat \to \Nat: l(i) = \min\{m \in \Nat: q_i \text{ is bounded on } B_{p_{m+1}} \},$$
is well defined.
Let $I$ be an arbitrary \mbox{$\Fil$-sta}\-tion\-ary set. We will apply Lemma~\ref{alternatievekaraktrerisatieP+} to the mapping $l_I: \Nat \to \Nat,$ defined by $l_I(i) = l(i)$ if $i\in I$ and $l_I(i) = i$ for every $i\in I^c.$\\
If $l_I$ is bounded on an \mbox{$\Fil$-sta}\-tion\-ary set, then it is also bounded on an \mbox{$\Fil$-sta}\-tion\-ary subset of $I$ and this would lead directly to the conclusion in the lemma.
Because $\Fil$ is a {\zwakkePfilter} it suffices to prove that the function $l_I$ is not finite-to-one on any \mbox{$\Fil$-sta}\-tion\-ary subset of $I.$
Suppose in desire of contradiction that $l$ is finite-to-one on the \mbox{$\Fil$-sta}\-tion\-ary subset $J$ of~$I.$
Proceeding exactly as in Lemma~\ref{puntsgewijsbegrensddanookopomgeving}, but with $J$ in the role of $I$, one uses Lemma~\ref{nietlegedoorsnedepools} to produce under this assumption $x\in X$ such that $(q_i(x))_i$ tends to $+\infty$ on the \mbox{$\Fil$-sta}\-tion\-ary set~$J$. This leads to contradiction with the pointwise \mbox{$\Fil$-bound}\-ed\-ness of~$(q_i)_i$.
\EndProof
\end{proof}

\noindent
We are now ready to give the proof of Theorem~\ref{Frechetbarrelled}. Note that Theorem~\ref{Frechetsuf} then follows by combining Theorem~\ref{Frechetbarrelled} and Lemma~\ref{BarrelleddanUBP}.

\begin{theorem}$\;$\label{Frechetbarrelled}
    \begin{enumerate}
        \item[(a)] If $X$ is a Fr\'echet space and $\Fil$ is a rapid$^+$ {\sterkePfilter} then $X$ is \mbox{$\Fil$-bar}\-relled.
        \item[(b)] If $X$ is a Fr\'echet space and $\Fil$ is a rapid$^+$ {\zwakkePfilter} then $X$ is\linebreak {stationarily} \mbox{$\Fil$-bar}\-relled.
    \end{enumerate}
\end{theorem}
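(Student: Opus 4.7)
The plan is to reduce both parts to a single statement about pointwise \mbox{$\Fil$-bounded} sequences of continuous seminorms, then to run a Sokal-style argument in the spirit of Lemma~\ref{puntsgewijsbegrensddanookopomgeving} built around a refined ``level'' function that tracks not merely \emph{whether} $q_i$ is bounded on some $B_{p_m}$ but also \emph{by how much}. Given an \mbox{$\Fil$-barrel}-system $(B_i)_i$ in the Fr\'echet space $X$, each Minkowski functional $q_i(x)=\inf\{t>0:x\in tB_i\}$ is a continuous seminorm (Fr\'echet spaces are barrelled, so $B_i$ is itself a $0$-neighbourhood), and the \mbox{$\Fil$-barrel}-system condition is equivalent to pointwise \mbox{$\Fil$-bound}\-ed\-ness of $(q_i)_i$; the desired (stationary) \mbox{$\Fil$-bar}\-relled\-ness therefore amounts to (stationary) \mbox{$\Fil$-equi}\-con\-tin\-uity of $(q_i)_i$.

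Fix a base $(p_m)_m$ of continuous seminorms with $p_m\le p_{m+1}$ and define
$$l(i):=\min\{m\in\Nat : q_i\le m\text{ on }B_{p_{m+1}}\},$$
which is finite by continuity of $q_i$. A uniform bound $l\le M$ on a set $F\in\Fil$, respectively on an \mbox{$\Fil$-sta}\-tion\-ary subset $J$ of a prescribed \mbox{$\Fil$-sta}\-tion\-ary set $I$, gives $\tfrac{1}{M}B_{p_{M+1}}\subseteq B_i$ for all $i$ in that set, which is precisely (stationary) \mbox{$\Fil$-bar}\-relled\-ness. Applying Lemma~\ref{alternatievekaraktrerisatiePx} to $l$ in case~(a), respectively Lemma~\ref{alternatievekaraktrerisatieP+} to the auxiliary function $l_I$ used in the proof of Lemma~\ref{puntsgewijsbegrensddanookopomgevingstat} in case~(b), reduces the problem to excluding the remaining alternative: that $l$ is finite-to-one on some \mbox{$\Fil$-sta}\-tion\-ary set $I$.

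To rule this out, I would use the rapid$^+$ property to thin $I$ to an \mbox{$\Fil$-sta}\-tion\-ary subset $I'\subseteq I$ whose enumeration $(i'_j)_j$ forces $k_j:=l(i'_j)$ to grow geometrically in $j$, with the consecutive ratios $k_{j+1}/k_j$ also bounded below. I would then apply the non-empty intersection principle of Lemma~\ref{nietlegedoorsnedepools} to
$$A_j=\alpha_j B_{p_{k_j}},\qquad V_j=\beta_j B_{p_{k_j}},\qquad C_j=\{x\in X: q_{i'_j}(x)\ge j\},$$
with $\alpha_j\sim j/k_j$ chosen so that Lemma~\ref{lemSokal}, combined with the defining inequality $\sup_{B_{p_{k_j}}}q_{i'_j}>k_j-1$, yields $(y+A_j)\cap C_j\neq\emptyset$ for every $y\in X$, and $\beta_j\ge\sum_{l\ge j}\alpha_l$ chosen so that (L1) holds while $\beta_{j+1}k_j$ stays much smaller than $j$. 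The resulting $x\in\bigcap_j(C_j+V_{j+1})$ satisfies $q_{i'_j}(x)\ge j-\beta_{j+1}k_j\to\infty$, contradicting pointwise \mbox{$\Fil$-bound}\-ed\-ness of $(q_i)_i$ on the \mbox{$\Fil$-sta}\-tion\-ary set $I'$.

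The main obstacle is the simultaneous balancing of $\alpha_j$, $\beta_j$ and $k_j$: unlike in the Banach case, the $0$-neighbourhood base in a Fr\'echet space necessarily shrinks in both the scalar and the seminorm index, so the three constraints -- that $(A_j),(V_j)$ form a $0$-neighbourhood base satisfying (L1) and (L2); that $\alpha_j(k_j-1)\ge j$ to secure $(A_j)$-connectedness via Lemma~\ref{lemSokal}; and that $\beta_{j+1}k_j=o(j)$ to control the error term -- pull against one another. It is precisely the rapid$^+$ property (through Lemma~\ref{alternatievekaraktrerisatierapid}) that permits $(k_j)_j$ to be made to grow fast enough, and with sufficiently large consecutive ratios, to reconcile all three requirements. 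Theorem~\ref{Frechetsuf} then follows at once by combining Theorem~\ref{Frechetbarrelled} with Lemma~\ref{BarrelleddanUBP}.
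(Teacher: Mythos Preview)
Your refined level function $l(i)=\min\{m:q_i\le m\text{ on }B_{p_{m+1}}\}$ and the single appeal to Lemma~\ref{alternatievekaraktrerisatiePx}/\ref{alternatievekaraktrerisatieP+} are an attractive shortcut, but the rapid$^+$ step as you describe it does not go through. Rapid$^+$ lets you thin $I$ to an $\Fil$-stationary $I'$ whose enumerating function $\eta_{I'}$ dominates any prescribed $f$; combined with finite-to-one-ness of $l$ this yields lower bounds such as $k_j=l(i'_j)\ge 4^j$. It gives no control whatsoever over the \emph{upper} tail of $k_j$, and hence no reason for $(k_j)_j$ even to be non-decreasing, let alone to have ratios $k_{j+1}/k_j$ bounded below. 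Without $k_{j+1}\ge k_j$ your sets $A_j=\alpha_jB_{p_{k_j}}$ fail~(L1) (the needed inclusion $B_{p_{k_{j+1}}}\subseteq B_{p_{k_j}}$ goes the wrong way), and without $k_{j+1}\ge k_j+1$ you cannot bound $\sup_{V_{j+1}}q_{i'_j}$ by $\beta_{j+1}k_j$ at all. In short, the three constraints you list genuinely cannot be reconciled from rapid$^+$ alone, because rapid$^+$ acts on positions in $\Nat$, not on the values of $l$.

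The paper circumvents this by decoupling the two roles your $l$ is trying to play. A \emph{second} use of the $P^+$-property, via Lemma~\ref{puntsgewijsbegrensddanookopomgeving} (whose proof relies on Lemma~\ref{normengeljkgroot} rather than on rapidity), first arranges that $\sup_{B_{p_m}}q_i<\infty$ for all $i,m$. Only then are $P^+$ and rapid$^+$ applied to the sets $X_k=\{i:\sup_{B_{p_k}}q_i>4^k\}$ to select $a_k\in X_k$ with $\{a_k:k\in\Nat\}$ $\Fil$-stationary. The ensuing Claim runs Lemma~\ref{nietlegedoorsnedepools} with the \emph{fixed} neighbourhood scale $A_k=3^{-k}B_{p_k}$ and the \emph{relative} target $C_k=\{x:q_{a_k}(x)\ge\tfrac{2}{3^{k+1}}\sup_{B_{p_k}}q_{a_k}\}$: since both the main term and the $V_{k+1}$-error are multiples of the same finite quantity $\sup_{B_{p_k}}q_{a_k}$, the estimate closes without any monotonicity hypothesis on an auxiliary sequence.
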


\begin{proof}
Let $X$ be a Fr\'echet space and let $\Fil$ be a filter.\\
Fix a base $(p_m)_m$ of continuous seminorms for $X$ with $p_m \leq p_{m+1}$ for every~$m.$
Let an arbitrary \mbox{$\Fil$-barrel}-system $(B_i)_i$ in $X$ be given.
Denote by $q_i$ the Minkowski-functional corresponding to the barrel $B_i$. Because $X$ is barrelled, every $q_i$ is a continuous seminorm on $X$ and because $(B_i)_i$ forms an \mbox{$\Fil$-barrel}-system, the sequence $(q_i)_i$ is pointwise \mbox{$\Fil$-bounded}.\\
We first prove the following claim:
\begin{claim}{If $(q_i)_i$ is a pointwise \mbox{$\Fil$-bounded} sequence of seminorms on $X$ and $(a_k)_k$ is a sequence of non-negative integers satisfying
  $$ 4 ^k \leq  \sup\{ q_{a_k}(x) : p_k(x) \leq 1 \} < \infty $$ for every $k$,
   then  $\{a_k : k \in \Nat\}$ can not be \mbox{$\Fil$-sta}\-tion\-ary.}
\end{claim}

\begin{proof}[Proof of Claim]\renewcommand{\qedsymbol}{}
{
Suppose that we could find such a sequence $(a_k)_k$ of non-negative integers with $\{a_k : k \in \Nat\}$ \mbox{$\Fil$-sta}\-tion\-ary.\\
Then, we could apply Lemma~\ref{nietlegedoorsnedepools} to the system $(A_k)_k$, $(V_k)_k$, $(C_k)_k$ defined by:
\begin{align*}
A_k &=\{ x\in X : p_k(x) \leq 3^{-k}  \}\\
V_k &= \{ x\in X : p_k(x) \leq \frac{3}{2} 3^{-k} \}\\
C_k &= \{ x\in X : q_{a_k}(x) \geq \frac{2}{3^{k+1}} \sup\{ q_{a_k}(x) : p_k(x) \leq 1  \}\}.
\end{align*}
It is again easily verified that the filter bases $(A_k)_k$ and $(V_k)_k$ satisfy the conditions (L1) and (L2) from Lemma~\ref{nietlegedoorsnedepools} and it follows from a direct application of Lemma~\ref{lemSokal} that $(C_k)_k$ is $(A_k)_k$-connected. Therefore, we can apply Lemma~\ref{nietlegedoorsnedepools} to find some $x\in \bigcap_{k\geq 1} (C_k + V_{k+1}).$
Then:
\begin{align*}
q_{a_k}(x) &\geq \inf\{q_{a_k}(y) : y \in C_k \} - \sup\{q_{a_k}(y) : y \in V_{k+1} \}\\ &\geq  \frac{2}{3^{k+1}} \sup\{ q_{a_k}(x) : p_k(x) \leq 1  \} - \frac{3}{2}\ 3^{-(k+1)} \sup\{ q_{a_k}(x) : p_{k+1}(x) \leq 1  \} \\ &\geq \frac{1}{6}\, 3^{-k} \sup\{ q_{a_k}(x) : p_k(x) \leq 1  \} \geq  \frac{1}{6} \left(\frac{4}{3}\right)^k \to \infty,
\end{align*}
as $k \to \infty.$  Hence, $(q_i(x))_i$ tends to $+\infty$ on the \mbox{$\Fil$-sta}\-tion\-ary set $\{a_k : k \in \Nat\},$ so that $(q_i)_i$ is not pointwise \mbox{$\Fil$-bounded}.
This concludes the proof of the claim.}
\end{proof}

\noindent
We proceed to prove statements (a) and (b) separately.
\smallskip

\noindent
(a) Let the filter $\Fil$ now be a rapid$^+$ {\sterkePfilter}. To prove part (a) of the theorem, it suffices to prove that the pointwise \mbox{$\Fil$-bounded} sequence $(q_i)_i$ of seminorms is \mbox{$\Fil$-equi}\-con\-tin\-uous. If follows from Lemma~\ref{puntsgewijsbegrensddanookopomgeving} that there is a \mbox{$0$-neigh}\-bour\-hood $U$ in $X$ and $F \in \Fil$ such that $q_i$ is bounded on $U$ whenever $i \in F$. Hence, by altering the sequence $(q_i)_i$ on a non-\mbox{$\Fil$-sta}\-tion\-ary set and renumbering the seminorms $(p_m)_m$ if necessary, it can be assumed that $\sup\{ q_{i}(x) : p_m(x) \leq 1  \}$ is finite for every $i,m \in \Nat.$  Suppose now that the sequence $(q_i)_i$ is not \mbox{$\Fil$-equi}\-con\-tin\-uous.\\
Set $X_k:=\{  i \in \Nat : \sup\{ q_{i}(x) : p_k(x) \leq 1  \} > 4^k  \}$. Then $(X_k)_k$ is a decreasing sequence of \mbox{$\Fil$-sta}\-tion\-ary sets and because $\Fil$ is a {\sterkePfilter}, there is an \mbox{$\Fil$-sta}\-tion\-ary $B$ such that  $B \subseteq^\ast X_k$ for every $k.$  Because $\Fil$ is rapid$^+,$ there exist $a_k\in X_k$ such that $\{ a_k : k\in \Nat   \}$ is \mbox{$\Fil$-sta}\-tion\-ary. But then the above claim can be used to reach a contradiction with the pointwise \mbox{$\Fil$-bound}\-ed\-ness of the sequence  $(q_i)_i$.
\smallskip

\noindent
(b) Let the filter $\Fil$ now be a rapid$^+$ {\zwakkePfilter}. Let $I$ be an arbitrary \mbox{$\Fil$-sta}\-tion\-ary set. To prove part (b) of the theorem, it suffices to prove that there exists an \mbox{$\Fil$-sta}\-tion\-ary set $J \subseteq I$ and a \mbox{$0$-neigh}\-bour\-hood $U$ in $X$ such that
$$\sup\{ q_j(x) : j \in J, x \in U \} $$ is finite. Suppose ($\ast$) that such a $J$ and $U$ do not exist. It follows from Lemma~\ref{puntsgewijsbegrensddanookopomgevingstat} that we can assume that $\sup\{ q_{i}(x) : p_m(x) \leq 1 \}$ is finite for every $i \in I$, $m \in \Nat.$ Indeed, since $(q_i)_i$ is pointwise \mbox{$\Fil$-bounded}, this can be accomplished (without affecting generality of the proof) by replacing $I$ by some \mbox{$\Fil$-sta}\-tion\-ary subset of $I$ and by renumbering the seminorms $(p_m)_m$, if necessary.\\
Set $X_k:=\{  i \in I : \sup\{ q_{i}(x) : p_k(x) \leq 1  \} > 4^k  \}$. By the assumption ($\ast$), none of the sets $X_k^c \cap I$ can be \mbox{$\Fil$-sta}\-tion\-ary. Then $(X_k \cup I^c)_k$ is a decreasing sequence of elements of $\Fil$ and because $\Fil$ is a {\zwakkePfilter}, there is an \mbox{$\Fil$-sta}\-tion\-ary $B \subseteq I$ such that  $B \subseteq^\ast X_k \cup I^c$ for every $k.$ It follows that also $B \subseteq^\ast X_k$ for every $k.$  Because $\Fil$ is rapid$^+,$ there exist $a_k\in X_k$ such that $\{ a_k : k\in \Nat   \}$ is \mbox{$\Fil$-sta}\-tion\-ary. But then the above claim can again be used to reach a contradiction with the pointwise \mbox{$\Fil$-bound}\-ed\-ness of the sequence  $(q_i)_i$.
\EndProof
\end{proof}

\begin{remark}
    For the reader who is solely interested in Banach spaces instead of the more general Fr\'echet spaces, the previous proof can be considerably simplified. Indeed, one finds that whenever $X$ is Banach, both Lemma~\ref{puntsgewijsbegrensddanookopomgeving} and Lemma~\ref{puntsgewijsbegrensddanookopomgevingstat} reduce to trivialities. Consequently, to prove that every Banach space is ({stationarily}) \mbox{$\Fil$-bar}\-relled whenever $\Fil$ is a rapid$^+$ (weak) {\sterkePfilter}, one can directly repeat the proof of Theorem~\ref{Frechetbarrelled}, needing only Lemma~\ref{nietlegedoorsnedepools} and Lemma~\ref{lemSokal} at hand.
\end{remark}

\begin{remark}\label{nieuwe opm}
Theorem~\ref{Frechetsuf} can be applied in similar fashion as the uniform boundedness principle to derive \mbox{$\Fil$-dependent} results in (functional) analysis. We illustrate this with two examples.\smallskip
\end{remark}

\noindent
{\it Weakly \mbox{$\Fil$-con}\-ver\-gent sequences}\\
Recall the following consequence of the uniform boundedness principle which is of special value in Banach space theory: \smallskip

\begin{minipage}[c]{0,95\textwidth}
if $X$ is Banach, then every $\sigma(X, X')$-bounded sequence $(x_n)_n$  in $X$ is norm-bounded. In particular every weakly convergent sequence $(x_n)_n$ in $X$ is norm-bounded.
\end{minipage}
\smallskip

\noindent
Note that since $X$ embeds linear-isometrically in its bidual, the above statement follows indeed by a direct application of the uniform boundedness principle.\\
Using the same reasoning one arrives at the following consequence of Theorem~\ref{Frechetsuf}.

 \begin{corollary}$\;$\label{remark1}\\
     Let $X$ be a Banach space and let $\Fil$ be a rapid$^+$ {\sterkePfilter}, then every sequence $(x_n)_n$ in~$X$ which is \mbox{$\Fil$-bounded} with respect to the $\sigma(X,X')$-topology is also \mbox{$\Fil$-bounded} in norm. In particular, if $(x_n)_n$ is weakly \mbox{$\Fil$-con}\-ver\-gent\footnote{$(x_n)_n$ is defined to be weakly $\Fil$-convergent to $x\in X$ whenever $(\varphi(x_n))_n$ is $\Fil$-convergent to $\varphi(x)$ for every $\varphi \in X'.$}, then $(x_n)_n$ is \mbox{$\Fil$-bounded} in norm.
 \end{corollary}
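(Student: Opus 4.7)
The plan is to reduce the statement to Theorem~\ref{Frechetsuf}(b) applied to the Banach space $X'$ via the canonical isometric embedding into the bidual, exactly as in the classical Banach--Steinhaus deduction of norm-boundedness from weak boundedness.

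Concretely, I would consider the canonical linear isometry $J: X \to X'' = \mathcal{L}(X',\mathbb{K})$ given by $Jx(\varphi) = \varphi(x)$, where $\mathbb{K}$ denotes the scalar field. The hypothesis that $(x_n)_n$ is $\sigma(X,X')$-$\Fil$-bounded says precisely that for each $\varphi \in X'$ the scalar sequence $(Jx_n(\varphi))_n = (\varphi(x_n))_n$ is $\Fil$-bounded in $\mathbb{K}$; equivalently (as one checks from the TVS definition by intersecting finitely many $\Fil$-sets corresponding to a subbasic weak $0$-neighbourhood), the sequence $(Jx_n)_n$ in $\mathcal{L}(X',\mathbb{K})$ is pointwise $\Fil$-bounded. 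Since $X'$ is a Banach (hence Fr\'echet) space and $\Fil$ is a rapid$^+$ \sterkePfilter, Theorem~\ref{Frechetsuf}(b) asserts that $UBP_{\Fil}(X',\mathbb{K})$ holds, so $(Jx_n)_n$ is $\Fil$-equicontinuous. Because $X'$ and $\mathbb{K}$ are both normed, this coincides, as recorded in Section~1.1, with $(\|Jx_n\|_{X''})_n$ being $\Fil$-bounded in $\mathbb{R}$, and the isometry $\|Jx_n\|_{X''} = \|x_n\|_X$ then yields the desired norm $\Fil$-boundedness of $(x_n)_n$.

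For the ``in particular'' statement, I would note that every $\Fil$-convergent scalar sequence is automatically $\Fil$-bounded: taking $\varepsilon = 1$ in the definition of $\Fil$-convergence provides, on a set in $\Fil$, the uniform bound $|\varphi(x_n)| \le |\varphi(x)| + 1$. Consequently weak $\Fil$-convergence of $(x_n)_n$ is a strengthening of $\sigma(X,X')$-$\Fil$-boundedness, so the first part of the corollary applies. I expect no substantial obstacle; the whole argument is a formal manipulation of the biduality correspondence together with a single appeal to the previously established Theorem~\ref{Frechetsuf}(b).
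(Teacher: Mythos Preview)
Your proposal is correct and follows exactly the approach indicated in the paper: the text immediately preceding the corollary notes that the classical result is obtained by embedding $X$ isometrically into its bidual and applying the uniform boundedness principle, and that ``using the same reasoning'' together with Theorem~\ref{Frechetsuf} one obtains the corollary. Your write-up simply spells out this bidual argument in detail, correctly invoking Theorem~\ref{Frechetsuf}(b) for the Banach space $X'$.
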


\noindent
The second conclusion of Corollary \ref{remark1} can be read in the following way: every weakly \mbox{$\Fil$-con}\-ver\-gent sequence $(x_n)_n$ in $X$ coincides with some bounded sequence $(y_n)_n$ on a set of indices which is contained in $\Fil.$\\
 It is instructive to note that there exist filters $\Fil$ for which this fails strongly in the following sense:\smallskip
 
 \begin{minipage}[c]{0,95\textwidth}
  every infinite-dimensional Banach space~$X$ contains some weakly \mbox{$\Fil$-con}\-ver\-gent sequence $(x_n)_n$ with the property that no (infinite) subsequence of $(x_n)_n$ is bounded.\end{minipage} \smallskip
 
  \noindent
  Indeed, an example of such a filter is the asymptotic density filter~$\Fil_d$.
 In fact, a characterisation of the (free) filters with the above property has been established in \cite[Theorem 3.2 and Remark 1]{ganichev}, where it is shown that the following three are equivalent
\begin{itemize}
\item[(i)] there exists an infinite-dimensional Banach space~$X$ which contains some weakly \mbox{$\Fil$-con}\-ver\-gent sequence $(x_n)_n$ with the property that no (infinite) subsequence of $(x_n)_n$ is bounded,\item[(ii)] every infinite-dimensional Banach space~$X$ contains some weakly \mbox{$\Fil$-con}\-ver\-gent sequence $(x_n)_n$ with the property that no (infinite) subsequence of $(x_n)_n$ is bounded,\item[(iii)] There exists a positive real sequence $(c_n)_n$ satisfying both $\displaystyle\lim_{n \to \infty}c_n= + \infty$ and $c_n = O(n)$, such that $\displaystyle\{ A\subseteq \Nat: \lim_{n \to \infty} \frac{|\{k<n : k \notin A \}|}{ c_n} = 0\}\subseteq \Fil.$
\end{itemize}
\bigskip

\noindent
{\it $\Fil$-convergence of Fourier-sums}\\
By applying Theorem~\ref{Frechetsuf} in the Banach space $C(\mathbb{T})$ of functions continuous on the circle, equipped with the supremum norm, one proves the following strengthening of a well-known result (see e.g.\ \cite[5.11]{rudin2})
for $\Fil=\mathcal C$.

\begin{lemma}$\;$\\
    For every rapid$^+$ {\zwakkePfilter} $\Fil,$ there exists $f\in C(\mathbb{T})$ such that the corresponding sequence 
    $$s_n(f) = \sum _{k=-n}^{n}{\hat {f}}(k)$$
    of partial Fourier-sums at $0$ is not \mbox{$\Fil$-bounded}. In particular, the series $\sum _{k \in \mathbb{Z}}{\hat {f}}(k)$ is not \mbox{$\Fil$-con}\-ver\-gent.
\end{lemma}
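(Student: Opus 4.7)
The plan is to mimic the classical proof showing that there exists a continuous function on $\mathbb{T}$ whose Fourier series at $0$ diverges, but replace the invocation of the Banach--Steinhaus theorem by the stationary uniform $\Fil$-boundedness principle provided by Theorem~\ref{Frechetsuf}(a).

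More concretely, consider the sequence of continuous linear functionals $T_n : C(\mathbb{T}) \to \mathbb{R}$ defined by $T_n(f) = s_n(f)(0) = \sum_{k=-n}^{n}\hat{f}(k)$. Representing $T_n$ as integration against the Dirichlet kernel $D_n$, one has the classical identity $\|T_n\| = \|D_n\|_1$, and it is well-known that the Lebesgue constants satisfy $\|D_n\|_1 \to \infty$ (they grow like $\log n$). I would recall this fact briefly but not re-derive it.

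Now argue by contradiction: suppose that for every $f \in C(\mathbb{T})$ the sequence $(s_n(f))_n = (T_n(f))_n$ is $\Fil$-bounded. Then $(T_n)_n$ is a pointwise $\Fil$-bounded sequence in $\mathcal{L}(C(\mathbb{T}), \mathbb{R})$. Since $\Fil$ is a rapid$^+$ weak $P^+$-filter, Theorem~\ref{Frechetsuf}(a) applies to the Banach (hence Fr\'echet) space $C(\mathbb{T})$ and yields that $(T_n)_n$ is $\Fil$-stationary-equicontinuous. Taking $I = \Nat$ as our $\Fil$-stationary set and $V = (-1,1)$, there exist $\delta > 0$ and an $\Fil$-stationary set $J \subseteq \Nat$ such that $T_i[\{f : \|f\|_\infty \leq \delta\}] \subseteq V$ for every $i \in J$; equivalently, $\|T_i\| \leq 1/\delta$ for every $i \in J$. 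But $\|T_n\| \to \infty$ means that $\{n \in \Nat : \|T_n\| \leq 1/\delta\}$ is finite, and since $\Fil$ is free no finite set can be $\Fil$-stationary. This is the desired contradiction, producing some $f \in C(\mathbb{T})$ for which $(s_n(f))_n$ fails to be $\Fil$-bounded.

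The "in particular" statement is then immediate: if the series $\sum_{k \in \mathbb{Z}} \hat{f}(k)$ were $\Fil$-convergent, its sequence of partial sums $(s_n(f))_n$ would converge along $\Fil$ and would therefore be $\Fil$-bounded (any neighbourhood of the limit contains a tail along $\Fil$). The main (and only nontrivial) obstacle is invoking Theorem~\ref{Frechetsuf}(a) correctly; the rest is the standard reduction to the Banach--Steinhaus argument for the divergence of Fourier series, together with the elementary observation that free filters contain no $\Fil$-stationary finite sets.
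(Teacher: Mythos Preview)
Your proof is correct and follows essentially the same approach as the paper: both arguments observe that $\|\Lambda_n\|\to\infty$ forces the sequence $(\Lambda_n)_n$ to fail $\Fil$-stationary-equicontinuity (since any $\Fil$-stationary set is infinite when $\Fil$ is free), and then invoke Theorem~\ref{Frechetsuf}(a) to conclude that $(\Lambda_n)_n$ cannot be pointwise $\Fil$-bounded. Your version simply spells out the contrapositive as a proof by contradiction and fills in the routine details the paper leaves implicit.
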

\begin{proof}
    Note that the operator norms of the functionals $\Lambda_n :C(\mathbb{T}) \to \mathbb{R}: f \mapsto \sum _{k=-n}^{n}{\hat {f}}(k)$ tend to infinity as $n \to \infty.$ By consequence, the sequence $(\Lambda_n)_n$ is not $\Fil$-stationary-equicontinuous,
    for any filter~$\Fil.$ The assertion follows directly by applying Theorem~\ref{Frechetsuf}.\EndProof
\end{proof}

\section{Necessary conditions for Fr\'echet-UBP-filters}
\subsection{Fr\'echet-UBP-filters are {\sterkePfilter s}}
We now pass to the study of those properties of $\Fil$ which are necessary for $\Fil$ to be a Fr\'echet-UBP-filter or a {stationary} Fr\'echet-UBP-filter. In this section we will show that every ({stationary}) Fr\'echet-UBP-filter is a (weak) {\sterkePfilter}.\\
In fact, something stronger is true: it is enough for the uniform \mbox{$\Fil$-bound}\-ed\-ness principle to hold for (at least) one arbitrary infinite-dimensional Fr\'echet space~$X$ to imply that $\Fil$ is a \sterkePfilter. The proof is somewhat surprising in the sense that it separates the case where $X$ is normed and the case where $X$ is non-normable. In the latter case, which is treated in the following lemma, the proof is based on directly exploiting the non-normability assumption on $X$.
 
\begin{lemma}$\;$ \label{P  voor niet normeerbaar}\\
Let $X$ be an infinite-dimensional metrizable locally convex topological vector space which is not normable.
\begin{enumerate}
    \item If the uniform \mbox{$\Fil$-bound}\-ed\-ness principle holds for $X,$ then $\Fil$ is a {\sterkePfilter}.
    \item If the stationary uniform \mbox{$\Fil$-bound}\-ed\-ness principle holds for $X,$ then $\Fil$ is a {\zwakkePfilter}.
\end{enumerate}
\end{lemma}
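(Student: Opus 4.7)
I would prove both statements by contraposition. Suppose $\Fil$ is not a \sterkePfilter{} (for part~1) or not a \zwakkePfilter{} (for part~2); I aim to construct a sequence $(T_i)_i$ in $\mathcal{L}(X,\mathbb{R})$ that is pointwise \mbox{$\Fil$-bounded} but fails the relevant (stationary) \mbox{$\Fil$-equi}\-con\-tin\-uity property, contradicting the corresponding uniform $\Fil$-boundedness principle for $X$. The combinatorial input comes from Lemma~\ref{alternatievekaraktrerisatiePx} (resp.\ Lemma~\ref{alternatievekaraktrerisatieP+}): there exists a function $\Gamma:\Nat\to\Nat$ that is unbounded on every element of $\Fil$ (resp.\ on every \mbox{$\Fil$-sta}\-tion\-ary set) and not finite-to-one on any \mbox{$\Fil$-sta}\-tion\-ary (sub)set. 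Setting $K := \{k\in\Nat : |\Gamma^{-1}(k)|=\infty\}$, all the $\Fil$-content is concentrated on $\Gamma^{-1}(K)$: if $\Gamma^{-1}(K^c)$ were \mbox{$\Fil$-sta}\-tion\-ary, $\Gamma$ would be finite-to-one on this set, contradicting the second property.

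The analytic input exploits non-normability of $X$. Thinning the defining seminorms $(p_m)_m$ if necessary, we may assume $p_{m+1}$ is not dominated by any multiple of $p_m$; then Hahn-Banach produces, for each $m$, a continuous functional $\phi_m\in X'$ and a vector $v_m\in X$ with $|\phi_m(\cdot)|\le p_{m+1}$, $\phi_m(v_m)=1$, and $p_m(v_m)$ arbitrarily small, so that $\phi_m$ attains arbitrarily large values on the seminorm ball $B_{p_m}$. Enumerating each infinite fibre as $\Gamma^{-1}(k)=\{i^k_1<i^k_2<\cdots\}$ for $k\in K$, the counter-example is then built by setting $T_{i^k_j} := \phi_k/\gamma_{k,j}$ for suitable scalars $\gamma_{k,j}$ with $\gamma_{k,j}\to\infty$ as $j\to\infty$ (for each fixed $k$), and $T_i := 0$ off the infinite fibres.

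Failure of \mbox{$\Fil$-equi}\-con\-tin\-uity at $V=(-1,1)$ is essentially immediate: for any $0$-neighbourhood $U=\{x\in X : p_m(x)\le\varepsilon\}$ the set $\{i:T_i[U]\not\subseteq V\}$ contains the first-in-fibre indices $i^k_1$ for which $k\ge m$ and $\varepsilon \sup_{p_m(y)\le 1}|\phi_k(y)|/\gamma_{k,1}>1$, which is \mbox{$\Fil$-sta}\-tion\-ary by the first property of $\Gamma$ (provided $\gamma_{k,1}$ grows slowly enough in $k$ compared to the $p_m$-operator norms of $\phi_k$). The main obstacle is the verification of pointwise \mbox{$\Fil$-bounded}ness: for each $x\in X$, within each fibre $\Gamma^{-1}(k)$ the values $|T_{i^k_j}(x)|=|\phi_k(x)|/\gamma_{k,j}$ tend to $0$ as $j\to\infty$, so the ``bad'' set inside each fibre is finite, but pooling these finite fibre-heads over $k\in K$ could, a priori, still yield a \mbox{$\Fil$-sta}\-tion\-ary set. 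The crux of the proof is to choose $\gamma_{k,j}$ so that for every $x\in X$ the pooled bad set lies in $\Fil^*$, exploiting simultaneously the Hahn-Banach freedom in selecting each $\phi_k$ and the shape of the infinite-fibre decomposition provided by $\Gamma$. A concrete guiding realisation is Example~\ref{vb}: for $\Fil_p\cong\{\Nat\}\times\mathcal{C}$ (via $\Nat\cong\Nat\times\Nat$) and $X=\mathbb{R}^\Nat$, the sequence $T_{(k,j)}(x):=x_k/k^j$ already works, since $\{(k,j):|x_k|>Ck^j\}$ is finite in each row and therefore in $\Fil_p^*$, whereas for $k>m$ the $p_m$-operator norm of $T_{(k,j)}$ is infinite, witnessing failure of \mbox{$\Fil_p$-equi}\-con\-tin\-uity.
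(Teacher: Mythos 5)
Your overall strategy (contraposition via Lemma~\ref{alternatievekaraktrerisatiePx} resp.\ Lemma~\ref{alternatievekaraktrerisatieP+}, followed by a scalar-valued counterexample sequence organised along the fibres of $\Gamma$) matches the paper's, but the construction you sketch has a genuine gap, and in fact the step you call ``essentially immediate'' is the one that fails. The set of first-in-fibre indices $\{i^k_1 : k\in K\}$ meets each fibre $\Gamma^{-1}(k)$ in exactly one point, so $\Gamma$ is injective, in particular finite-to-one, on it; by the second defining property of $\Gamma$ such a set can \emph{never} be \mbox{$\Fil$-sta}\-tion\-ary, so it cannot witness failure of \mbox{$\Fil$-equi}\-con\-tin\-uity. (It is the first property of $\Gamma$ --- unboundedness on every $F\in\Fil$ --- that must be used, and it only guarantees that each $F\in\Fil$ contains indices $i^k_j$ with $k$ large and $j$ uncontrolled.) Your within-fibre damping $\gamma_{k,j}\to\infty$ then works against you: for any $h:\Nat\to\Nat$ the set $\Nat\setminus\{i^k_j : j\le h(k)\}$ belongs to $\Fil$ (its complement meets each fibre finitely, so $\Gamma$ is finite-to-one on it), and on this set every surviving $T_i$ has been damped by at least $\gamma_{k,h(k)}$. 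Hence, unless each $\phi_k$ is genuinely \emph{unbounded} on every basic neighbourhood $B_{p_m}$, a suitably fast $h$ makes $(T_i)_i$ \mbox{$\Fil$-equi}\-con\-tin\-uous and the counterexample evaporates. Your Hahn--Banach functionals with $|\phi_k|\le p_{k+1}$ do not have this unboundedness: on $B_{p_m}$ they attain only the finite value of order $1/p_m(v_k)$. (Your guiding example on $\mathbb{R}^\Nat$ works precisely because there the coordinate functionals \emph{are} unbounded on the basic neighbourhoods for $k>m$.)

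The paper's proof repairs exactly these two points. First, non-normability plus the Mackey theorem shows that no basic neighbourhood $V_k$ is $\sigma(X,X')$-bounded, so one can choose $\varphi_k\in X'$ unbounded on $V_k$; setting $\psi_n:=\frac1n\varphi_{\Gamma(n)}$, each $\psi_n$ is still unbounded on $V_{\Gamma(n)}$, and every $F\in\Fil$ contains, by unboundedness of $\Gamma$ on $F$, some $n$ with $\Gamma(n)>k$ --- killing equicontinuity for every candidate $U=V_k$ at once. Second, the damping is by $1/n$ (global position) rather than by position within the fibre, and the issue you single out as ``the crux'' is then a non-issue: for fixed $x$ and $C$, the bad set $\{n : |\psi_n(x)|>C\}=\{n : |\varphi_{\Gamma(n)}(x)|>Cn\}$ meets each fibre finitely, i.e.\ $\Gamma$ is finite-to-one on it, and the second property of $\Gamma$ immediately forces it to be non-stationary; no delicate simultaneous choice of scalars is required. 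I would encourage you to rerun your argument with functionals unbounded on the $V_k$; the combinatorial scaffolding you set up is otherwise sound.
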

\begin{proof}
We give the proof of the first statement, it is easily adapted to prove the second statement.
Suppose that $\Fil$ is not a {\sterkePfilter}. Using Lemma~\ref{alternatievekaraktrerisatiePx}, we find $\Gamma: \mathbb{N} \to \mathbb{N}$ with:
\begin{itemize}
\item $\Gamma$ is unbounded on every element of $\Fil$,
\item $\Gamma$ is not finite-to-one on any \mbox{$\Fil$-sta}\-tion\-ary set.
\end{itemize}
Let $(V_k)_k$ be a decreasing \mbox{$0$-neigh}\-bour\-hood base in $X$. Since $X$ is not normable, none of the neighbourhoods $V_k$ is bounded. Since $X$ is locally convex, it follows from the Mackey theorem that the original topology on $X$ and the weak topology $\sigma(X,X')$ share the same bounded sets. Therefore, none of the neighbourhoods $V_k$ is $\sigma(X,X')$-bounded. It follows that we can select for each $n \in \Nat$ a functional $\varphi_n \in X'$ which is unbounded on $V_n.$ We claim that the sequence $(\psi_n)_n$ with $\psi_n(x) := \frac{1}{n} \varphi_{\Gamma(n)}$ contradicts the uniform \mbox{$\Fil$-bound}\-ed\-ness principle. %
Indeed, it is clear that $\Gamma$ is finite-to-one on any set of the form  $\{n \in \Nat: |\psi_n(x)|> C \}$, with $C \in \mathbb{R}_{>0}$ and $x \in X.$ Therefore, all of these sets have to be non-\mbox{$\Fil$-sta}\-tion\-ary, hence the sets $\{n \in \Nat: |\psi_n(x)| \leq C \}$ have to belong to $\Fil$ and $(\psi_n)_n$ is pointwise \mbox{$\Fil$-bounded}. From the uniform \mbox{$\Fil$-bound}\-ed\-ness principle it would follow that there exist $k \in \Nat$ and $A \in \Fil$ such that
$$(\forall x \in V_k)(\forall n \in A)\quad |\psi_n(x)| \leq 1 .$$
Since $\Gamma$ is unbounded on $A,$ we can choose $n \in A$ such that $\Gamma(n) > k.$ Then:
$$ (\forall x \in V_k)\quad |\varphi_{\Gamma(n)}(x)| \leq n.$$
This contradicts the fact that $\varphi_{\Gamma(n)}$ is unbounded on $V_{\Gamma(n)} \subseteq V_k.$
 \EndProof
 \end{proof}
 
 \noindent
 In the remainder of this section we concentrate on infinite-dimensional normed spaces $X$ and show that the (stationary) uniform \mbox{$\Fil$-bound}\-ed\-ness principle cannot hold for $X$ when $\Fil$ is not a (weak) {\sterkePfilter}. For this purpose, we make use of the Josefson-Nissenzweig theorem which assures that the dual space of every Banach space $X$ is rich enough for counterexamples to the uniform \mbox{$\Fil$-bound}\-ed\-ness principle to exist.
 
\begin{lemma}$\;$\label{Pointwise bounded P}\\
    Let $X$ be an infinite-dimensional normed space, $\mathcal{F}$ a filter on $\Nat$ and $I\subseteq \Nat$ \mbox{$\Fil$-sta}\-tion\-ary. If $\Gamma : \Nat \to \Nat$ is not finite-to-one on any \mbox{$\Fil$-sta}\-tion\-ary subset of $I,$
    then there exists a pointwise \mbox{$\Fil$-bounded} sequence $(\varphi_l)_l$ in the dual space~$X'$ such that $\| \varphi_l \| = \Gamma(l)$ for every $l\in I.$
\end{lemma}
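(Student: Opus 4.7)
The plan is to combine the Josefson--Nissenzweig theorem with a reindexing of $I$ along the fibers of $\Gamma$ that exploits the failure of the finite-to-one condition. First, I would apply Josefson--Nissenzweig to the completion of $X$ to obtain a sequence $(\psi_j)_j \subseteq X'$ with $\|\psi_j\| = 1$ and $\psi_j(x) \to 0$ as $j \to \infty$ for every $x \in X$. The passage to the completion is needed because Josefson--Nissenzweig is classically formulated for Banach spaces, but weak$^\ast$-convergence in the dual of the completion automatically restricts to pointwise convergence on the dense subspace $X$, and the operator norms coincide.

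For the construction, I would split $I$ along the fibers of $\Gamma$. For each $k \in \Nat$, enumerate
$$I_k := \Gamma^{-1}(\{k\}) \cap I$$
in increasing order as $l_{k,0} < l_{k,1} < \cdots$ (the enumeration may be finite or empty) and set $\varphi_{l_{k,j}} := k\,\psi_j$ for every valid $j$. For $l \in \Nat \setminus I$ I would simply take $\varphi_l := 0$. By construction $\|\varphi_l\| = \Gamma(l)$ for every $l \in I$, which is the required norm condition; the indices outside $I$ are rendered harmless by the choice $\varphi_l = 0$.

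To verify pointwise \mbox{$\Fil$-bound}\-ed\-ness, fix $x \in X$ and any $C > 0$; the goal is to show that the set $S_{x,C} := \{l \in \Nat : |\varphi_l(x)| > C\}$ lies in the dual ideal $\Fil^\ast$, i.e.\ is not \mbox{$\Fil$-sta}\-tion\-ary. Since $\varphi_l = 0$ for $l \notin I$, we have $S_{x,C} \subseteq I$. Were $S_{x,C}$ \mbox{$\Fil$-sta}\-tion\-ary, the hypothesis on $\Gamma$ would yield some $k \in \Nat$ for which $\Gamma^{-1}(\{k\}) \cap S_{x,C}$ is infinite; that is, $k\,|\psi_j(x)| > C$ for infinitely many $j$. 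For $k = 0$ this is immediate nonsense, while for $k \geq 1$ it contradicts $\psi_j(x) \to 0$. Hence $S_{x,C}$ is non-stationary, and its complement belongs to $\Fil$, giving pointwise \mbox{$\Fil$-bound}\-ed\-ness.

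The only delicate point I anticipate is the correct invocation of Josefson--Nissenzweig in the normed (rather than Banach) setting; once that is settled, everything reduces to a transparent combinatorial manipulation of the fibers $I_k$, with the rapid weak$^\ast$-decay of $(\psi_j)_j$ absorbing the multiplicative factor $k$ on each fiber separately.
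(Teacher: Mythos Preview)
Your proof is correct and follows essentially the same strategy as the paper: reduce to the completion, invoke Josefson--Nissenzweig to obtain a weak$^\ast$-null normalized sequence $(\psi_j)_j$ in $X'$, scale by $\Gamma$, and argue by contradiction that an $\Fil$-stationary ``bad'' set would force $\Gamma$ to have an infinite fiber on which the weak$^\ast$-null condition is violated.

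The one difference is that you reindex along the fibers $I_k = \Gamma^{-1}(\{k\})\cap I$, defining $\varphi_{l_{k,j}} = k\,\psi_j$, whereas the paper simply sets $\varphi_l = \Gamma(l)\,\psi_l$ for $l\in I$ (same index on both sides) and $\varphi_l = 0$ otherwise. The paper's choice makes the verification marginally shorter: if $A_m := \{l : |\varphi_l(x)|>C,\ \Gamma(l)=m\}$ is infinite, then $m|\psi_l(x)|>C$ for infinitely many $l$, directly contradicting $\psi_l(x)\to 0$. Your fiberwise enumeration achieves the same conclusion but introduces an extra layer of bookkeeping that is not actually needed. Otherwise the two arguments are interchangeable.
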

\begin{proof}
    Note that by passing to the completion of $X$ if necessary, we may assume that $X$ is complete.
    By the Josefson-Nissenzweig theorem (see for example \cite{behrends} for a complete proof), we can choose a sequence $(\psi_n)_n$ in the unit sphere of $X'$ which is $\sigma(X',X)$-null (i.e.\ $\psi_n(x) \to 0$ for every $x\in X$).
    Consider the sequence of bounded linear functionals 
    given by $$\varphi_l=\begin{cases}\Gamma(l)\psi_{l} & \text{when } l \in I, \\   0 & \text{otherwise}.\end{cases}$$
    We claim that for every $x\in X$ and every $C \in \mathbb{R}_{>0},$ $$\{ l\in\Nat: | \varphi_l(x) | \leq C   \}\in\mathcal{F}.$$
    Suppose not, then $\{  l\in\Nat: |\varphi_l(x)| > C \}$ is an \mbox{$\Fil$-sta}\-tion\-ary subset of $I$.\\
    Because $\Gamma$ is not finite-to-one on any \mbox{$\Fil$-sta}\-tion\-ary subset of $I,$ there exists $m \in \Nat$ such that
    $A_m :=  \{  l\in\Nat: |\varphi_l(x)| > C\,\wedge \, \Gamma(l) = m \}$ is an infinite set.
    But then
    $$C \leq  \lim_{l \in A_m, l \to \infty} |\varphi_l(x)| =   \lim_{l \in A_m, l \to \infty}  m |\psi_{l}(x)|  = 0,$$
    a contradiction which justifies our claim.
\EndProof
\end{proof}

\begin{lemma}$\;$\label{Nec P} \\
    Let $X$ be an infinite-dimensional normed space and $\mathcal{F}$ a filter on $\Nat.$
    \begin{enumerate}
        \item[(a)] If the uniform \mbox{$\Fil$-bound}\-ed\-ness principle holds for $X$, then $\Fil$ is a {\sterkePfilter}.
        \item[(b)] If the stationary uniform \mbox{$\Fil$-bound}\-ed\-ness principle holds for $X$, then $\Fil$ is a {\zwakkePfilter}.
    \end{enumerate}
\end{lemma}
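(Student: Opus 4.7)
The plan is to prove both parts by contradiction, using the alternative characterisations of (weak) {\sterkePfilter s} from Lemmas~\ref{alternatievekaraktrerisatieP+} and~\ref{alternatievekaraktrerisatiePx}, combined with the just-proved Lemma~\ref{Pointwise bounded P}. The key observation in the normed setting is that a sequence $(\varphi_l)_l$ in $X'$ is \mbox{$\Fil$-equi}\-con\-tin\-uous if and only if there exist $C>0$ and $A \in \Fil$ with $\|\varphi_l\| \leq C$ for every $l \in A$; the analogous statement holds for stationary $\Fil$-equicontinuity upon replacing ``$A\in\Fil$'' by ``$\Fil$-stationary $J\subseteq I$''. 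So the failure of (weak) $P^+$ will be converted via a suitable $\Gamma:\Nat\to\Nat$ and Lemma~\ref{Pointwise bounded P} into a pointwise $\Fil$-bounded sequence of functionals whose norms grow in a way that prevents (stationary) $\Fil$-equicontinuity.

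For part~(a), I would suppose that $\Fil$ is not a {\sterkePfilter}. By Lemma~\ref{alternatievekaraktrerisatiePx} there exists $\Gamma:\Nat\to\Nat$ which is unbounded on every element of $\Fil$ and not finite-to-one on any $\Fil$-stationary set. Applying Lemma~\ref{Pointwise bounded P} with $I=\Nat$ gives a pointwise $\Fil$-bounded sequence $(\varphi_l)_l$ in $X'$ with $\|\varphi_l\|=\Gamma(l)$ for every $l$. If $UBP_\Fil(X,\mathbb{R})$ held, the sequence would be $\Fil$-equicontinuous, yielding $C>0$ and $A\in\Fil$ such that $\Gamma(l)=\|\varphi_l\|\le C$ for every $l\in A$, directly contradicting the unboundedness of $\Gamma$ on $A$.

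Part~(b) runs on the same template with Lemma~\ref{alternatievekaraktrerisatieP+} in place of Lemma~\ref{alternatievekaraktrerisatiePx}. Supposing $\Fil$ is not a {\zwakkePfilter} produces a function $\Gamma:\Nat\to\Nat$ and an $\Fil$-stationary set $I$ such that $\Gamma$ is unbounded on every $\Fil$-stationary set and not finite-to-one on any $\Fil$-stationary subset of $I$. Lemma~\ref{Pointwise bounded P} now yields a pointwise $\Fil$-bounded sequence $(\varphi_l)_l$ in $X'$ with $\|\varphi_l\|=\Gamma(l)$ for $l\in I$ (and $\varphi_l = 0$ for $l \notin I$, which certainly does not disturb pointwise $\Fil$-boundedness). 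Stationary $UBP$ applied to this very $I$ delivers $C>0$ and an $\Fil$-stationary $J\subseteq I$ on which $\Gamma(l)=\|\varphi_l\|\le C$, contradicting unboundedness of $\Gamma$ on the $\Fil$-stationary set $J$.

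No genuine obstacle arises, as all ingredients are already in place: the combinatorial reformulations via a function $\Gamma$, the Josefson--Nissenzweig construction packaged in Lemma~\ref{Pointwise bounded P}, and the dictionary between $\Fil$-equicontinuity and norm $\Fil$-boundedness in the dual of a normed space. The only minor care needed is to ensure that the $\Fil$-stationary set $I$ produced in part~(b) by the failure of the weak $P^+$-property is exactly the set to which stationary $UBP$ is subsequently applied, so that the witness $J\subseteq I$ furnished by the principle lands in the regime where $\|\varphi_l\|=\Gamma(l)$ and the contradiction crystallises.
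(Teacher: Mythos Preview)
Your proposal is correct and follows essentially the same approach as the paper's proof: both parts argue by contradiction, negate the (weak) $P^+$-property to extract a witness function $\Gamma$ via Lemma~\ref{alternatievekaraktrerisatiePx} (resp.\ Lemma~\ref{alternatievekaraktrerisatieP+}), invoke Lemma~\ref{Pointwise bounded P} to obtain a pointwise $\Fil$-bounded sequence of functionals with $\|\varphi_l\|=\Gamma(l)$, and then derive a contradiction with (stationary) $\Fil$-equicontinuity from the unboundedness properties of $\Gamma$. The only cosmetic difference is that in part~(b) the paper phrases the contradiction as ``$(\varphi_l)_l$ is not bounded on any $\Fil$-stationary subset of $I$'' rather than explicitly applying the stationary principle to the specific set $I$, but this is the same argument.
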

\begin{proof}
    (a) Suppose that $\Fil$ is not a {\sterkePfilter}.\\ Using Lemma~\ref{alternatievekaraktrerisatiePx}, choose $\Gamma: \Nat \to\Nat$ such that:
    \begin{itemize}
        \item $\Gamma$ is unbounded on every element of $\Fil,$
        \item $\Gamma$ is not finite-to-one on any \mbox{$\Fil$-sta}\-tion\-ary subset of $\Nat.$
    \end{itemize}
    Using Lemma~\ref{Pointwise bounded P}, we find a pointwise \mbox{$\Fil$-bounded} sequence $(\varphi_l)_l$ in $X'$ such that $\| \varphi_l \| = \Gamma(l)$ for every $l.$ To reach a contradiction with the uniform \mbox{$\Fil$-bound}\-ed\-ness principle, it suffices to note that $(\varphi_l)_l$ can not be \mbox{$\Fil$-bounded}, because $\Gamma$ is not bounded on any element of $\Fil.$\\
    
    (b) Suppose that $\Fil$ is not a {\zwakkePfilter}.\\ Using Lemma~\ref{alternatievekaraktrerisatieP+}, choose $\Gamma: \Nat \to\Nat$ and an \mbox{$\Fil$-sta}\-tion\-ary set $I$ such that:
    \begin{itemize}
        \item $\Gamma$ is unbounded on every \mbox{$\Fil$-sta}\-tion\-ary set,
        \item $\Gamma$ is not finite-to-one on any \mbox{$\Fil$-sta}\-tion\-ary subset of $I.$
    \end{itemize}
    Using Lemma~\ref{Pointwise bounded P}, we find a pointwise \mbox{$\Fil$-bounded} sequence $(\varphi_l)_l$ in $X'$ such that $\| \varphi_l \| = \Gamma(l)$ for every $l\in I.$  Because $\Gamma$ is unbounded on every \mbox{$\Fil$-sta}\-tion\-ary set and $\| \varphi_l \| = \Gamma(l)$ for every $l \in I,$ we find that $(\varphi_l)_l$ is not bounded on any \mbox{$\Fil$-sta}\-tion\-ary subset of $I$. Since  $(\varphi_l)_l$ is pointwise \mbox{$\Fil$-bounded}, this is in contradiction with the stationary uniform \mbox{$\Fil$-bound}\-ed\-ness principle.\EndProof
\end{proof}

\noindent
One can note that the sequences constructed in the proofs of both Lemma~\ref{P voor niet normeerbaar} and Lemma~\ref{Pointwise bounded P} are not only pointwise \mbox{$\Fil$-bounded} but also pointwise \mbox{$\Fil$-con}\-ver\-gent to $0$. By also noting that $\Fil_d$ is not a \sterkePfilter, we arrive at the following Corollary~\ref{revision}.
We remark that in the case that $X$ is a Banach space, Corollary~\ref{revision} is a direct consequence of Theorem~1 in \cite{connor}.

\begin{corollary}$\;$\label{revision}\\
    For every infinite-dimensional metrizable locally convex topological vector space there exists a sequence $(\varphi_l)_l \in X'$ that is not \mbox{$\Fil_d$-equi}\-con\-tin\-uous, yet pointwise converges statistically to zero.
\end{corollary}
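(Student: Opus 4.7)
The plan is to feed the asymptotic density filter $\Fil_d$ into the constructions from Lemma~\ref{P voor niet normeerbaar} and Lemma~\ref{Pointwise bounded P}, and observe that the sequences produced are actually pointwise \mbox{$\Fil_d$-con}\-ver\-gent to $0$, not merely pointwise \mbox{$\Fil_d$-bound}\-ed. As a preliminary, I would recall from Example~2.1 that $\Fil_d$ is not a \sterkePfilter, so by Lemma~\ref{alternatievekaraktrerisatiePx} there exists a function $\Gamma:\Nat\to\Nat$ that is unbounded on every element of $\Fil_d$ and not finite-to-one on any \mbox{$\Fil_d$-sta}\-tion\-ary set. Note also that a subset of $\Nat$ is non-\mbox{$\Fil_d$-sta}\-tion\-ary precisely when it has asymptotic density zero, so statistical convergence to~$0$ coincides with $\Fil_d$-convergence to~$0$.

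In the normable case, I would first pass to the completion so that $X$ is Banach, and then apply Lemma~\ref{Pointwise bounded P} with $I=\Nat$ and the above $\Gamma$. This yields a sequence $(\varphi_l)_l$ in $X'$ of the form $\varphi_l = \Gamma(l)\psi_l$ with $\|\varphi_l\| = \Gamma(l)$ and $(\psi_l)_l$ a Josefson-Nissenzweig sequence. A close reading of the proof of Lemma~\ref{Pointwise bounded P} shows that for every $x\in X$ and every $C>0$ the set $\{l:|\varphi_l(x)|>C\}$ is non-\mbox{$\Fil_d$-sta}\-tion\-ary, hence has density zero. This gives pointwise $\Fil_d$-convergence $\varphi_l(x)\to 0$. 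Failure of \mbox{$\Fil_d$-equi}\-con\-tin\-uity reduces in the Banach setting to failure of \mbox{$\Fil_d$-bound}\-ed\-ness of the norm sequence $(\|\varphi_l\|)_l=(\Gamma(l))_l$, which is immediate from $\Gamma$ being unbounded on every element of $\Fil_d$.

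In the non-normable case, I would rerun the construction of Lemma~\ref{P voor niet normeerbaar}: take a decreasing $0$-neighbourhood base $(V_k)_k$ in $X$, invoke the Mackey theorem to pick $\varphi_n\in X'$ unbounded on $V_n$, and set $\psi_n := \tfrac{1}{n}\varphi_{\Gamma(n)}$. The failure of \mbox{$\Fil_d$-equi}\-con\-tin\-uity is identical to the contradiction derived in that lemma. What remains, and is really the only new content of the corollary, is to verify pointwise statistical convergence to~$0$: for fixed $x\in X$ and $\varepsilon>0$,
\[
\{n:|\psi_n(x)|>\varepsilon\} \;=\; \{n:|\varphi_{\Gamma(n)}(x)|>n\varepsilon\},
\]
and for each fixed $m$ the slice $\{n:\Gamma(n)=m,\ |\varphi_m(x)|>n\varepsilon\}$ is contained in $\{n:n<|\varphi_m(x)|/\varepsilon\}$ and is therefore finite. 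Hence $\Gamma$ is finite-to-one on the set in question, which by our choice of $\Gamma$ forces it to be non-\mbox{$\Fil_d$-sta}\-tion\-ary, and consequently of density zero. The only real obstacle is this last finite-to-one check; once in place, both cases combine to give the desired sequence.
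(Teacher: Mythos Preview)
Your proposal is correct and follows exactly the route the paper indicates: the paper's proof is the one-sentence remark preceding the corollary, namely that the sequences built in Lemma~\ref{P voor niet normeerbaar} and Lemma~\ref{Pointwise bounded P} are in fact pointwise \mbox{$\Fil$-con}\-ver\-gent to~$0$ (not just \mbox{$\Fil$-bounded}), combined with the fact that $\Fil_d$ is not a \sterkePfilter. You have simply written out this observation in full, including the finite-to-one check in the non-normable case that makes the convergence (rather than mere boundedness) explicit.
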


\subsection{Stationary Fr\'echet-UBP-filters are Rapid$^+$-filters}
To conclude the characterisation of ({stationary}) Fr\'echet-UBP-filters, we prove that every {stationary} Banach-UBP-filter is rapid$^+$. The following more general statement holds.

\begin{lemma}$\;$\label{nec rapid}\\
    Let $X$ be an infinite-dimensional normed space and $\mathcal{F}$ a filter on $\Nat.$ If the stationary uniform \mbox{$\Fil$-bound}\-ed\-ness principle holds for $X$, then $\Fil$ is a rapid$^ +$ filter.
\end{lemma}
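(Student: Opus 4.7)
The plan is to assume $\Fil$ is not rapid$^+$ and to exhibit a sequence $(\varphi_l)_l\subseteq X^*$ which is pointwise $\Fil$-bounded but not $\Fil$-stationary-equicontinuous, thereby contradicting the hypothesis that $UBP^{\text{stat}}_\Fil(X,\mathbb{R})$ holds. By Lemma~\ref{alternatievekaraktrerisatierapid} I fix an $\Fil$-stationary $I\subseteq\Nat$ together with an increasing interval-partition $(I_n)_n$ of $I$ with respect to which $\Fil$ is slow. Applying slowness to the set $\bigcup_{n\,:\,|I_n|\le n}I_n$ shows that this set is non-$\Fil$-stationary, so after removing the corresponding intervals and reindexing I may assume $|I_n|>n$ for every $n$. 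Since any coarsening of a slow partition is again slow, I may moreover coarsen $(I_n)_n$ at a later stage so that $\min I_n$ grows as rapidly as will be needed below.

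By the Josefson--Nissenzweig theorem I fix $(\psi_j)_{j\in\Nat}\subseteq X^*$ in the unit sphere with $\psi_j\to 0$ in $\sigma(X^*,X)$. Enumerating $I_n=\{l_{n,1}<\cdots<l_{n,|I_n|}\}$, the proposed counter-example is
\[
\varphi_l=\begin{cases} n\,\psi_{l} & \text{if } l=l_{n,k}\text{ with }1\le k\le|I_n|-n,\\ 0 & \text{otherwise.}\end{cases}
\]
The failure of $\Fil$-stationary-equicontinuity on $I$ is immediate from slowness: for every $M$, the level set $\{l\in I:\|\varphi_l\|\le M\}$ decomposes as a finite part (contributed by $n\le M$) together with the ``zero'' set, whose intersection with each $I_n$ has size exactly $n$ and is therefore non-$\Fil$-stationary by slowness; consequently no $\Fil$-stationary subset of $I$ admits a uniform norm bound on $(\varphi_l)_l$.

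The main obstacle is pointwise $\Fil$-boundedness. For each $x\in X$ and $C>0$, the intersection of $\{l\in I:|\varphi_l(x)|>C\}$ with each $I_n$ is contained in the finite set $\{l\in\Nat:|\psi_l(x)|>C/n\}$, but the bare weak-$*$ null condition only yields finiteness of each slice, not the size bound $\le n$ required to re-invoke slowness. To secure that bound I would first reduce, by homogeneity, to $x$ in the unit ball of a separable closed subspace of $X$, fix a countable dense sequence $(x_m)_m$ there, and by a diagonal argument pass to a subsequence of $(\psi_j)$ satisfying $|\psi_j(x_m)|<1/j$ for all $j\ge J_m$; simultaneously I would coarsen $(I_n)_n$ so that $\min I_n$ dominates $\max(J_m,n/C)$ whenever $m\le n$. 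The resulting slice estimate then holds for the dense points $x_m$, and transfers to arbitrary $x$ in the unit ball via the inequality $|\varphi_l(x)|\le|\varphi_l(x_m)|+n\,\|x-x_m\|$ for $l\in I_n$ combined with a choice $m=m(n)$ making $\|x-x_m\|=o(1/n)$. Completing this transfer cleanly is the principal technical step of the proof.
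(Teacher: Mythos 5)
Your overall skeleton is the right one and matches the paper's: extract, via Lemma~\ref{alternatievekaraktrerisatierapid}, a stationary $I$ with an interval-partition $(I_n)_n$ with respect to which $\Fil$ is slow, build functionals whose norms blow up like the block index so that no stationary subset of $I$ carries a uniform bound, and then try to get pointwise $\Fil$-boundedness by showing that for each fixed $x$ the ``bad'' set $\{l\in I_n:|\varphi_l(x)|>C\}$ has at most $n$ elements, so that slowness makes the union non-stationary. The gap is exactly where you say it is, and it is not repairable along the lines you sketch. A Josefson--Nissenzweig sequence is only pointwise null: for a fixed unit vector $x$ there is no control whatsoever on \emph{how many} indices $l\in I_n$ satisfy $|\psi_l(x)|>C/n$, and this count can genuinely be superlinear in $n$. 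Concretely, take $X=\ell_2$ and $\psi_j=e_j$: a unit vector $x$ with coordinates of size roughly $C/n$ spread over about $n^{3/2}/\log^2 n$ indices of each $I_n$ is square-summable, and there exist filters that are slow with respect to $(I_n)_n$ (e.g.\ the dual of the ideal of sets $A$ with $\sup_n|A\cap I_n|/n<\infty$) for which such a bad set is stationary; for these the sequence $\varphi_l=n\psi_l$ is simply not pointwise $\Fil$-bounded, so the construction itself fails, not merely your proof of it. Your proposed repair cannot close this: it needs, for every $x$ in the unit ball, an approximant $x_{m(n)}$ with $m(n)\le n$ (or $\le h(n)$ for some function $h$ fixed before $x$ is chosen) and $\|x-x_{m(n)}\|=o(1/n)$, which would exhibit the unit ball as a countable union of totally bounded sets --- impossible in an infinite-dimensional normed space.

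The paper's proof (following Kadets--Shumyatskiy--Shvidkoy) replaces Josefson--Nissenzweig by Dvoretzky's theorem: it chooses, for each block $I_n$, a finite family $(\varphi_l)_{l\in I_n}$ in $X'$ that is $\varepsilon$-almost-euclidean, and sets $\psi_l=\sqrt{n}\,\varphi_l$ for $l\in I_n$ (note the $\sqrt{n}$ rather than $n$). Almost-orthogonality then supplies precisely the missing counting bound via a Bessel-type argument: if $b_n$ indices $l\in I_n$ satisfy $|\psi_l(x)|>r$ for a unit vector $x$, the signed sum $\chi_n=\sum\operatorname{sgn}(\psi_l(x))\varphi_l$ has norm at most $\sqrt{(1+\varepsilon)b_n}$ while $\chi_n(x)\ge b_n r/\sqrt n$, which forces $b_n\le n$ once $r=\sqrt{1+\varepsilon}$. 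That quantitative per-block estimate, valid for \emph{every} $x$ in the unit ball simultaneously, is the ingredient your weak-$*$ null sequence cannot provide; if you want to salvage your write-up, replace the Josefson--Nissenzweig input by Dvoretzky blocks and the scaling $n$ by $\sqrt{n}$.
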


\noindent
For this purpose, we adapt an argument used in the proof of Theorem 2.6 in~\cite{kadets}. The main tool in this argument is Dvoretzky's theorem on the existence of almost-euclidean sequences in normed spaces.
\\
\newline
Let $(X, \|\ \|)$ now be a normed space, let $\varepsilon > 0$. \\
Denote by $|a|_2$ the euclidean norm on $a\in \mathbb{R} ^{n+1}, |(a_0, \ldots ,a_n)|_2 := \sqrt{a_0^2 + \ldots + a_n^2}$.\\
We will call a finite sequence $x_m, \ldots, x_{m+n}$ $\varepsilon$-almost-euclidean whenever
\begin{align*}
    |(a_m, \ldots, a_{m+n})|_2^2 \leq \|\sum_{i=0}^n a_{m+i} x_{m+i} \| ^2\leq (1+\varepsilon) |(a_{m}, \ldots, a_{m+n})|_2^2,\\
    \qquad (\forall (a_m, \ldots, a_{m+n}) \in \mathbb{R} ^{n+1}).
\end{align*}

\begin{definition}$\;$\\
    Let $(I_n)_n$ be an increasing interval-partition of $I$, let $\varepsilon > 0$.\\
    A sequence $(x_n)_{n \in I}$ in $X$ is $\varepsilon$-almost-euclidean on $(I_n)_n$ if the finite sequence $(x_{j} : j \in I_n)$ is $\varepsilon$-almost-euclidean for every $n \in \mathbb{N}.$
\end{definition}

\begin{lemma}$\;$\label{lemma}\\
    Let $X$ be a normed space, $\Fil$ a filter on $\mathbb{N}$, $I\subseteq \Nat$ \mbox{$\Fil$-sta}\-tion\-ary, $\varepsilon>0$ and $(I_n)_n$ an increasing interval-partition of $I$.
    If
    \begin{itemize}
        \item[(a)] $\Fil$ is slow with respect to $(I_n)_n,$
        \item[(b)] $X'$ contains a sequence $(\varphi_n)_{n\in I}$ that is $\varepsilon$-almost-euclidean on $(I_n)_n,$
    \end{itemize}
    then $X'$ contains a pointwise \mbox{$\Fil$-bounded} sequence $(\psi_n)_n$ which is not bounded on any infinite subset of $I.$
\end{lemma}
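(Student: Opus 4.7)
The plan is to interpret the $\varepsilon$-almost-euclidean hypothesis as a Bessel-type inequality on each block $I_n$ and then to scale $\varphi_j$ by a factor that grows with the block index, so that the resulting sequence is large in norm but, on any fixed vector, concentrated on at most $n$ indices per block $I_n$. Slowness of $\Fil$ with respect to $(I_n)_n$ will then automatically turn ``at most $n$ per block'' into membership of~$\Fil.$

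First I would derive the Bessel-type estimate. Fix $x \in X$ and view it as an element of $X''$ via $\xi \mapsto \xi(x).$ Substituting $a_j := \varphi_j(x)$ for $j \in I_n$ into the upper inequality defining $\varepsilon$-almost-euclideanity and pairing with $x$ gives
$$\sum_{j \in I_n}\varphi_j(x)^2\;=\;x\Big(\sum_{j \in I_n}\varphi_j(x)\,\varphi_j\Big)\;\leq\;\|x\|\cdot \Big\|\sum_{j \in I_n}\varphi_j(x)\,\varphi_j\Big\|\;\leq\;\|x\|\sqrt{1+\varepsilon}\,\Big(\sum_{j\in I_n}\varphi_j(x)^2\Big)^{1/2},$$
which simplifies to $\sum_{j\in I_n}\varphi_j(x)^2\leq (1+\varepsilon)\|x\|^2$ for every $n.$ A Chebyshev-type bound then yields $|\{j\in I_n:|\varphi_j(x)|>t\}|\leq (1+\varepsilon)\|x\|^2/t^2$ for every $t>0.$

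Next I would set $\psi_j:=\sqrt{n}\,\varphi_j$ for $j\in I_n$ and $\psi_j:=0$ for $j\in\Nat\setminus I.$ The almost-euclidean hypothesis applied to a single unit-coordinate vector forces $\|\varphi_j\|\geq 1,$ so $\|\psi_j\|\geq \sqrt n$ on each block $I_n$; since blocks are finite, every infinite $A\subseteq I$ meets $I_n$ for arbitrarily large $n,$ whence $\sup_{j\in A}\|\psi_j\|=\infty.$ To verify pointwise $\Fil$-boundedness, fix $x\in X,$ choose $C\geq \sqrt{1+\varepsilon}\,\|x\|,$ and apply the Chebyshev estimate with $t=C/\sqrt{n}$: the set $B_C:=\{j\in I:|\psi_j(x)|>C\}$ satisfies $|B_C\cap I_n|\leq (1+\varepsilon)n\|x\|^2/C^2\leq n$ for every $n\geq 1,$ while $B_C\cap I_0=\emptyset$ by construction. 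The contrapositive of hypothesis (a) forces $B_C$ to fail $\Fil$-stationarity, so $\{j\in\Nat:|\psi_j(x)|\leq C\}=B_C^{c}\in\Fil,$ as required.

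The main insight, and the only non-routine step, is the first one: recognising that $\varepsilon$-almost-euclideanity in $X'$ passes, via the canonical inclusion $X\hookrightarrow X'',$ to a genuine Bessel inequality giving quantitative control over $\sum_{j\in I_n}\varphi_j(x)^2.$ Once that is in place, everything reduces to matching a Chebyshev tail-bound against the $n$-threshold supplied by slowness.
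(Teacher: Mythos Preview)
Your proof is correct and follows the same strategy as the paper's: both define $\psi_j=\sqrt{n}\,\varphi_j$ on the block $I_n$, use the almost-euclidean lower bound to show unboundedness on infinite subsets of $I$, and then establish the cardinality estimate $|B_C\cap I_n|\le n$ to invoke slowness. The only difference lies in how that cardinality bound is derived. The paper works directly with the test functional $\chi_n=\sum_{i\in B_r\cap I_n}\operatorname{sgn}(\varphi_i(x))\,\varphi_i$, bounding $\chi_n(x)$ from below by $b_n r/\sqrt{n}$ and from above via almost-euclideanity by $\sqrt{(1+\varepsilon)\,b_n}$; you instead first isolate the Bessel inequality $\sum_{j\in I_n}\varphi_j(x)^2\le(1+\varepsilon)\|x\|^2$ and then apply Chebyshev. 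The two computations are equivalent in content and produce the same threshold $C=\sqrt{1+\varepsilon}\,\|x\|$, but your packaging makes the underlying mechanism---that almost-euclideanity in $X'$ passes to a genuine Bessel bound on evaluations at points of $X$---more transparent.
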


\begin{proof} (Following closely the proof of \cite[Theorem 2.6]{kadets}.)\\
    Choose $f: \Nat \to I$ strictly increasing with $(I_n)_n = (I^f_n)_n.$
    Because $f$ is strictly increasing, $f$ has a non-decreasing left-inverse $g:I \to\mathbb{N}$. Define $\psi_n = \sqrt{g(n)} \varphi_n$ for $n \in I$ and $\psi_n = 0$ otherwise. For every $n \in I,$ the vector $\varphi_n$ is contained in an $\varepsilon$-almost-euclidean finite sequence and hence $\| \varphi_n \| \geq 1.$ Since $g$ is unbounded and non-decreasing, it follows that the sequence $(\psi_n)_n$ is not bounded (in $X'$) on any infinite subset of $I$. Next, take $x \in X$ with $\|x\|\le 1$ arbitrarily. We will prove that $A_r = \{k \in\Nat: |\psi_k(x)| \leq r  \}$ belongs to $\Fil,$ for certain $r\in \mathbb{R}_{>0}.$ Let $n$ be arbitrary, put $B_r := \Nat \setminus A_r$ and put $b_n = | B_r \cap I_n |.$ %
    \begin{claim}{ For a suitable choice of $r$ we have that $b_n \leq n$ for each $n\in \Nat.$}\end{claim}

    \begin{proof}[Proof of Claim]\renewcommand{\qedsymbol}{}
        {
        Consider $\chi_n = \sum_{i \in  B_r \cap I_n} \operatorname {sgn} (\psi_i(x)) \varphi_i.$\\ First note that
        \begin{equation}
        \chi_n(x) \geq \frac{b_n r}{\sqrt{g(f(n))}} = \frac{b_n r}{\sqrt{n}}.
        \tag{IE1}
        \end{equation}
         By $(\varphi_i : i\in I_n)$ being $\varepsilon$-almost-euclidean in $X',$ we have: $$\| \chi_n \|^2 \leq  (1+\varepsilon) b_n,$$
        hence
        \begin{equation}
        \chi_n(x) \leq \sqrt{(1+\varepsilon) b_n}.
        \tag{IE2}
        \end{equation}
        By combining inequalities (IE1) and (IE2) and selecting $r = \sqrt{1+\varepsilon},$ we find at once,
        $$ \sqrt{b_n} \leq \sqrt{n}.$$}
    \end{proof}
    
    \noindent
    Since $B_r \subseteq I$ and $\Fil$ is slow with respect to $(I_n)_n,$ we have found $r$ such that $B_r$ is non-\mbox{$\Fil$-sta}\-tion\-ary and hence $A_r \in \Fil.$
\EndProof
\end{proof}

\noindent
The existence of $\varepsilon$-almost-euclidean sequences in infinite-dimensional normed spaces is guaranteed by Dvoretzky's theorem in convex geometry (for more background and a modern proof see e.g.\ \cite{proofDvoretzky}), which we use in the following form:

\begin{theorem}[Dvoretzky (1961) \cite{dvoretzky}]$\;$\\
    For every $\varepsilon>0$ and $k \in \mathbb{N},$ there exists $N \in \mathbb{N}$ such that every normed space of dimension $N$ contains an  $\varepsilon$-almost-euclidean sequence of length $k.$
\end{theorem}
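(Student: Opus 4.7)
The plan is to prove Dvoretzky's theorem via the concentration-of-measure approach due to Milman. Finding an $\varepsilon$-almost-euclidean sequence $x_1, \ldots, x_k$ in an $N$-dimensional normed space $X$ amounts to finding a $k$-dimensional subspace $E \subseteq X$ whose norm is, up to rescaling, within a factor $\sqrt{1+\varepsilon}$ of a Euclidean norm $|\cdot|_E$: an $|\cdot|_E$-orthonormal basis of $E$, suitably scaled, then yields the required sequence. I would identify $X$ with $(\mathbb{R}^N, \|\cdot\|)$ and equip $\mathbb{R}^N$ with an auxiliary inner product producing a Euclidean norm $|\cdot|_2$, to be chosen carefully at the end. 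Introduce the two crucial parameters
\begin{equation*}
b := \sup\{\|x\| : |x|_2 = 1\}, \qquad M := \int_{S^{N-1}} \|x\|\, d\sigma(x),
\end{equation*}
where $\sigma$ denotes normalised Haar measure on the Euclidean sphere $S^{N-1}$.

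Since $x \mapsto \|x\|$ is $b$-Lipschitz with respect to $|\cdot|_2$, L\'evy's classical concentration inequality on the sphere gives
\begin{equation*}
\sigma\bigl(\{x \in S^{N-1} : |\|x\| - M| > \eta\}\bigr) \leq 2\exp\bigl(-c_1 N \eta^2/b^2\bigr)
\end{equation*}
for an absolute constant $c_1 > 0$. The geometric heart of Dvoretzky's theorem lives in this inequality: the norm is essentially constant on the Euclidean sphere off a set of exponentially small measure.

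I would then execute a random-subspace argument. Fix a reference $k$-dimensional subspace $E_0 \subseteq \mathbb{R}^N$, take an $\varepsilon$-net $\mathcal{N}$ in its $|\cdot|_2$-unit sphere of cardinality at most $(3/\varepsilon)^k$, and pick $U$ uniformly from the orthogonal group $O(N)$. For each fixed $x \in \mathcal{N}$, the vector $Ux$ is uniform on $S^{N-1}$; applying the concentration inequality with $\eta = \varepsilon M$ and a union bound over $\mathcal{N}$, the probability that some net point of $UE_0$ has $\|\cdot\|$-value outside $(1 \pm \varepsilon)M$ is at most $(3/\varepsilon)^k \cdot 2\exp(-c_1 N \varepsilon^2 M^2/b^2)$. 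Whenever
\begin{equation*}
k \leq c_2 \cdot \frac{N (M/b)^2 \varepsilon^2}{\log(1/\varepsilon)},
\end{equation*}
this probability is strictly less than $1$, so a good realisation $U$ exists, and a standard successive-approximation argument upgrades the net control to uniform control on the full unit sphere of $E := UE_0$. After absorbing absolute constants into $\varepsilon$ and rescaling so that the lower bound becomes $1$, an $|\cdot|_2$-orthonormal basis of $E$ furnishes the desired $\varepsilon$-almost-euclidean sequence of length $k$.

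The main obstacle is to ensure that, for $N$ large enough in terms of $k$ and $\varepsilon$, the so-called Dvoretzky dimension $N(M/b)^2$ actually exceeds $k \log(1/\varepsilon)$; for a poorly chosen Euclidean structure this ratio can collapse. The standard fix is to place $X$ in \emph{John's position}, taking $|\cdot|_2$ to be the Euclidean norm whose unit ball is the ellipsoid of maximal volume inscribed in the unit ball of $\|\cdot\|$, so that $b \leq 1$ by construction. In this position, the Dvoretzky--Rogers lemma produces an $|\cdot|_2$-orthonormal system $e_1, \ldots, e_m$ with $m$ proportional to $N$ and $\|e_i\| \geq 1/2$ for each $i$; averaging the signed sums $\sum \pm e_i$ over uniform random signs then yields $M \geq c_3 \sqrt{\log N}$, whence $N(M/b)^2 \geq c_4 \log N$. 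Choosing $N$ so that $c_4 \log N \cdot c_2 \varepsilon^2 \geq k \log(1/\varepsilon)$ therefore completes the argument. The genuinely subtle step, and the one I would flag as the hard part, is the Dvoretzky--Rogers lemma itself, proved by iteratively selecting contact points of John's ellipsoid with $\partial B_{\|\cdot\|}$ and Gram--Schmidt-orthogonalising them while controlling the loss in norm at each step.
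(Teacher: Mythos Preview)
Your sketch is the standard Milman concentration-of-measure proof of Dvoretzky's theorem, and as a sketch it is correct: L\'evy's inequality, the random-subspace/net argument, John's position, and the Dvoretzky--Rogers lower bound $M \gtrsim \sqrt{\log N}$ are exactly the ingredients of the modern proof.

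However, there is nothing to compare it to: the paper does \emph{not} prove this theorem. It is stated with attribution to Dvoretzky~\cite{dvoretzky}, a reference to a modern proof is given (``for more background and a modern proof see e.g.\ \cite{proofDvoretzky}''), and it is then used as a black box to deduce Corollary~\ref{Cor2}. So your proposal is not an alternative to the paper's argument; it is a proof of a result the paper simply quotes.
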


\begin{corollary} \label{Cor2}$\;$\\
    Let $(I_n)_n$ be an increasing interval-partition of $I \subseteq \mathbb{N}$ and let $\varepsilon>0.$ \\
    Every infinite-dimensional normed space contains a sequence that is $\varepsilon$-almost-euclidean on $(I_n)_n.$
\end{corollary}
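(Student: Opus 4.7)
The statement reduces immediately to the blockwise version of Dvoretzky's theorem, since the definition of an $\varepsilon$-almost-euclidean sequence on $(I_n)_n$ imposes no condition relating different blocks: one only requires that, separately for each $n$, the finite sub-sequence $(x_j : j \in I_n)$ is $\varepsilon$-almost-euclidean in the sense preceding the definition. Hence the blocks can be handled one at a time and then concatenated.

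\textbf{Key steps.} First, for each $n \in \mathbb{N}$, let $k_n := |I_n|$, which is finite because $I_n$ is an interval of $\mathbb{N}$. Applying the statement of Dvoretzky's theorem recorded just above, I pick an integer $N_n$ with the property that every normed space of dimension $N_n$ admits an $\varepsilon$-almost-euclidean sequence of length $k_n$. Second, because $X$ is infinite-dimensional, I select inside $X$ a linear subspace $Y_n$ of dimension exactly $N_n$; equipped with the restriction of the norm of $X$, $Y_n$ is itself an $N_n$-dimensional normed space, so Dvoretzky supplies an $\varepsilon$-almost-euclidean sequence $y_1^{(n)}, \ldots, y_{k_n}^{(n)}$ in $Y_n \subseteq X$. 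Third, I enumerate $I_n$ in increasing order as $j_1 < j_2 < \cdots < j_{k_n}$ and define $x_{j_\ell} := y_\ell^{(n)}$ for $\ell = 1, \ldots, k_n$. Performing this construction independently for every $n \in \mathbb{N}$ yields a sequence $(x_j)_{j \in I}$ in $X$; by construction the restriction of $(x_j)_{j \in I}$ to any block $I_n$ is an $\varepsilon$-almost-euclidean finite sequence in $X$ (as it was already one in the subspace $Y_n$, and the norm inequalities in question depend only on the norm of $X$). Consequently $(x_j)_{j \in I}$ is $\varepsilon$-almost-euclidean on $(I_n)_n$.

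\textbf{Main difficulty.} There is essentially no obstacle: everything rests on Dvoretzky's theorem, which is the substantive input. The only conceptual point worth noting is that being $\varepsilon$-almost-euclidean is an intrinsic property of a finite sequence and of the ambient norm, so the sequence produced inside the finite-dimensional subspace $Y_n$ retains the property when viewed inside $X$, and no compatibility between successive blocks needs to be arranged.
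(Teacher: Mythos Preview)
Your argument is correct and is exactly the intended one: the paper states the corollary immediately after Dvoretzky's theorem without further proof, treating it as an evident blockwise application, and you have spelled out precisely that application.
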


\noindent
This leads to the necessity of rapidity for the {UBP$^{\text stat}$}- and UBP-properties. We now give the proof of Lemma~\ref{nec rapid}.

    \begin{proof}[Proof of Lemma \ref{nec rapid}]
    If $\Fil$ is not rapid$^+$, one can (using Lemma~\ref{alternatievekaraktrerisatierapid}) choose an \mbox{$\Fil$-sta}\-tion\-ary set $I$ and an increasing interval-partition $(I_n)_n$ of $I$ such that $\Fil$ is slow with respect to $(I_n)_n.$ By Corollary \ref{Cor2}, one can choose next a sequence $(\varphi_n)_n$ in $X'$ which is $\varepsilon$-almost-euclidean on $(I_n)_n$ (where one has fixed an arbitrary $\varepsilon\in \mathbb{R_{ >\text{0}}}$). By Lemma~\ref{lemma}, one then finds a pointwise \mbox{$\Fil$-bounded} sequence $(\psi_n)_n$ in $X'$ which is not bounded on any infinite subset of $I$. This contradicts the stationary uniform \mbox{$\Fil$-bound}\-ed\-ness principle for~$X.$\EndProof
\end{proof}

\section{Conclusions and additional remarks}

Theorem~\ref{Frechetsuf}, Lemma~\ref{Nec P} and Lemma~\ref{nec rapid} can now be combined to obtain the following two characterisation theorems.

\begin{theorem}$\;$ \label{mainx}\\
    The following statements are equivalent for a filter~$\Fil$ on $\mathbb{N}$.
    \begin{enumerate}
        \item There exists an infinite-dimensional Banach space~$X$ such that the uniform \mbox{$\Fil$-bound}\-ed\-ness principle holds for $X.$
        \item The uniform \mbox{$\Fil$-bound}\-ed\-ness principle holds for every Fr\'echet space~$X.$
        \item $\Fil$ is both a rapid$ ^+$- and a {\sterkePfilter}.
    \end{enumerate}
\end{theorem}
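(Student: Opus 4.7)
The plan is to establish the three equivalences by a cyclic chain $(2) \Rightarrow (1) \Rightarrow (3) \Rightarrow (2)$, with essentially all substantive work already absorbed into results stated earlier in the paper.

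The implication $(2) \Rightarrow (1)$ is essentially vacuous: any infinite-dimensional Banach space (for instance $\ell^2$) is in particular a Fr\'echet space, so if $UBP_{\Fil}(X,Y)$ holds for every Fr\'echet $X$ and every locally convex $Y$, then some infinite-dimensional Banach space witnesses (1).

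For $(1) \Rightarrow (3)$, suppose $X$ is an infinite-dimensional Banach space for which $UBP_{\Fil}(X,Y)$ holds for every locally convex $Y$. The key preliminary remark is that $UBP_{\Fil}(X,Y)$ implies $UBP^{\text stat}_{\Fil}(X,Y)$: if $(T_i)_i$ is pointwise $\Fil$-bounded, then for any $0$-neighbourhood $V$ in $Y$ one finds, by $\Fil$-equicontinuity, a $0$-neighbourhood $U$ in $X$ and a set $A\in\Fil$ with $T_i[U]\subseteq V$ for all $i\in A$; then for any $\Fil$-stationary $I$, the set $J:=I\cap A$ is an $\Fil$-stationary subset of $I$ on which $T_i[U]\subseteq V$, and its stationarity is immediate from the fact that $A\cap F\in\Fil$ for every $F\in\Fil$. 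With this observation both $UBP_{\Fil}(X,-)$ and $UBP^{\text stat}_{\Fil}(X,-)$ hold for $X$. Lemma~\ref{Nec P}(a) then forces $\Fil$ to be a {\sterkePfilter}, and Lemma~\ref{nec rapid} forces $\Fil$ to be rapid$^+$.

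For $(3) \Rightarrow (2)$, assume $\Fil$ is simultaneously rapid$^+$ and a {\sterkePfilter}. This is precisely the hypothesis of Theorem~\ref{Frechetsuf}(b), whose conclusion is that $\Fil$ is a Fr\'echet-UBP-filter, i.e.\ that $UBP_{\Fil}(X,Y)$ holds for every Fr\'echet space $X$ and every locally convex $Y$. This completes the cycle.

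There is no genuine obstacle at this stage; the serious analytic and combinatorial work has already been carried out in Theorem~\ref{Frechetsuf}(b) (the Sokal-style sufficiency proof via the non-empty intersection principle of Lemma~\ref{nietlegedoorsnedepools}), in Lemma~\ref{Nec P} (the Josefson--Nissenzweig construction of pointwise $\Fil$-bounded counterexamples when $P^+$ fails), and in Lemma~\ref{nec rapid} (the Dvoretzky-based construction when rapidity fails). Theorem~\ref{mainx} is the bookkeeping that glues these three inputs together, together with the trivial passage from $UBP_{\Fil}$ to $UBP^{\text stat}_{\Fil}$ required to invoke Lemma~\ref{nec rapid}.
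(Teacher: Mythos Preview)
Your proof is correct and matches the paper's approach exactly: the paper simply states that Theorem~\ref{Frechetsuf}, Lemma~\ref{Nec P} and Lemma~\ref{nec rapid} combine to give the result, and your cycle $(2)\Rightarrow(1)\Rightarrow(3)\Rightarrow(2)$ is precisely that combination spelled out. Your explicit verification that $UBP_{\Fil}$ implies $UBP^{\text stat}_{\Fil}$ (needed because Lemma~\ref{nec rapid} is stated only for the stationary principle) is a useful detail that the paper leaves implicit.
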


\begin{theorem}$\;$ \label{main+}\\
    The following statements are equivalent for a filter~$\Fil$ on $\mathbb{N}$.
    \begin{enumerate}
        \item There exists an infinite-dimensional Banach space~$X$ such that the stationary uniform \mbox{$\Fil$-bound}\-ed\-ness principle holds for $X.$
        \item The stationary uniform \mbox{$\Fil$-bound}\-ed\-ness principle holds for every Fr\'echet space~$X.$
        \item $\Fil$ is both a rapid$^+$- and {\zwakkePfilter}.
    \end{enumerate}
\end{theorem}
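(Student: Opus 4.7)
The plan is to close the cycle $3 \Rightarrow 2 \Rightarrow 1 \Rightarrow 3$, with each arrow invoking a result already established in the paper, in direct parallel to how one would assemble Theorem~\ref{mainx} from Theorem~\ref{Frechetsuf}(a), Lemma~\ref{Nec P}(a) and (implicitly) the observation that Banach spaces are Fr\'echet.

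For $3 \Rightarrow 2$, I would quote Theorem~\ref{Frechetsuf}(a) directly: if $\Fil$ is a rapid$^+$ {\zwakkePfilter}, then $\Fil$ is a stationary Fr\'echet-UBP-filter, which is exactly the assertion of statement~2. The implication $2 \Rightarrow 1$ is immediate from the fact that every infinite-dimensional Banach space is a Fr\'echet space and such spaces exist (any classical Banach space will serve as a witness~$X$).

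For $1 \Rightarrow 3$, suppose some infinite-dimensional Banach space~$X$ satisfies the stationary uniform \mbox{$\Fil$-bound}\-ed\-ness principle. Since $X$ is in particular an infinite-dimensional normed space, Lemma~\ref{Nec P}(b) shows that $\Fil$ is a {\zwakkePfilter}, and Lemma~\ref{nec rapid} shows that $\Fil$ is a rapid$^+$ filter. Together these give statement~3.

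There is no main obstacle here, as all three implications reduce to citations of previously established results; the content of the theorem lies in recognising that the combined sufficient conditions (Theorem~\ref{Frechetsuf}(a)) and the combined necessary conditions (Lemma~\ref{Nec P}(b) together with Lemma~\ref{nec rapid}) match up exactly, so that the conjunction "rapid$^+$ and \zwakkePfilter" characterises stationary Banach-UBP-filters and simultaneously characterises stationary Fr\'echet-UBP-filters. The only thing to remark is that the witness $X$ in statement~1 is not required to be the same as the space universally quantified in statement~2; the equivalence says that the mere existence of a single infinite-dimensional Banach witness forces the property to propagate to every Fr\'echet space.
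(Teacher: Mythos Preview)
Your proof is correct and follows exactly the paper's own approach: the paper simply states that Theorem~\ref{Frechetsuf}, Lemma~\ref{Nec P} and Lemma~\ref{nec rapid} combine to give Theorem~\ref{main+}, and your cycle $3 \Rightarrow 2 \Rightarrow 1 \Rightarrow 3$ invokes precisely these results in the right places. One small slip in your parenthetical remark: Theorem~\ref{mainx} is assembled from Theorem~\ref{Frechetsuf}(b) (the {\sterkePfilter} case), not~(a); your actual proof of Theorem~\ref{main+}, however, cites the correct parts throughout.
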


\noindent
In particular, the classes of ({stationary}) Fr\'echet-UBP-filters and ({stationary}) Banach-UBP-filters coincide,  we will now simplify terminology and refer to these filters as ({stationary}) UBP-filters.

\begin{example}[\cite{repicky}] $\,$\label{vb}\\
    Theorem~\ref{mainx} allows to give an elementary example of a UBP-filter~$\mathcal{G}$ which is not countably generated. This example was suggested to the authors by Miroslav Repick\'y.\\
    Let $\Nat = \bigsqcup_{i\in \Nat} E_i$ be a partition of $\Nat$ into finite sets $E_i$. Suppose in addition that $\limsup_{i\in \Nat} |E_i| = \infty.$ We consider the following filter:
    $$\mathcal{G}  := \{ A \subseteq \Nat : \limsup_{i\in \Nat} | E_i \setminus A| < \infty \}.$$
    Using that $A \subseteq \Nat$ is $\mathcal{G}$-stationary if and only if
    $$\limsup_{i\in \Nat} | E_i \cap A| = \infty,$$
     it is easily checked that $\mathcal{G}$ is a rapid$^+$ {\sterkePfilter} and hence, by Theorem~\ref{mainx}, also a UBP-filter. Moreover, given any sequence $(S_n)_n$ of elements of $\mathcal{G}$, it is possible to recursively construct increasing sequences of natural numbers
     $l_0 <l_1 < l_2 < l_3< \ldots$ and $k_0 <k_1 <k_2 < k_3 < \ldots$ such that $k_n \in E_{l_n} \cap S_n$. Then the set $\Nat \setminus \{ k_n : n \in \Nat \}$ belongs to $\mathcal{G}$ but does not contain any of the sets $S_n$ as subset. It follows that $\mathcal{G}$ can not be countably generated.
\end{example}

\subsection{UBP-ultrafilters}
As the filter~$\mathcal{G}$ in the previous example is only $F_\sigma$ in the Cantor space $P(\Nat),$ this example shows that UBP-filters of uncountable character need not be complex in the sense of descriptive set theory.
To study the other extreme, we now focus on the special case of UBP-ultrafilters. Adhering to our earlier convention, all ultrafilters we consider are supposed to be free.
For ultrafilters, the notions of {\zwakkePfilter} and {\sterkePfilter} coincide and one uses the term P-points to denote those ultrafilters which are also \mbox{{\sterkePfilter}s}.
The rapid$^+$ P-points are referred to as semi-selective ultrafilters.
Semi-selective ultrafilters are also characterised by the following combinatorial property (see e.g.~\cite{blass2}).\smallskip

\begin{lemma}$\;$\\
    $\mathcal{U}$ is a semi-selective ultrafilter if and only if for every sequence $(A_k)_k$ of elements of $\Ul$, there exists $\{a_0 < a_1 < \ldots\} \in \Ul$ with $a_k \in A_k$ for every~$k.$
\end{lemma}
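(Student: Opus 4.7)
The plan is to prove both directions of the equivalence, reducing each to the defining combinatorial properties of semi-selective ultrafilters (i.e.\ being simultaneously P$^+$ and rapid$^+$). Note that for ultrafilters, $\Ul$-stationary sets are exactly the elements of $\Ul$, so P$^+$ is just the usual P-point property and rapid$^+$ is the usual rapidity property.

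For the implication $(\Leftarrow)$, assume $\Ul$ satisfies the diagonalisation property. To see that $\Ul$ is a P-point, I would take an arbitrary decreasing sequence $(A_k)_k$ in $\Ul$, apply the hypothesis to obtain $B = \{a_0 < a_1 < \ldots\} \in \Ul$ with $a_k \in A_k$ for each $k$, and observe that the decreasingness of $(A_k)_k$ forces $\{a_k, a_{k+1}, \ldots\} \subseteq A_k$, so $B \subseteq^\ast A_k$ for every $k$. To see that $\Ul$ is rapid, given a strictly increasing $f:\Nat \to \Nat$, I would apply the hypothesis to the sequence $A_k := \{n \in \Nat : n \geq f(k)\} \in \Ul$; the resulting enumerated set $B = \{a_0 < a_1 < \ldots\} \in \Ul$ then satisfies $a_k \geq f(k)$, i.e.\ $f \leq \eta_B$.

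For the implication $(\Rightarrow)$, the idea is to combine the P-point and rapidity properties. First I would replace $(A_k)_k$ by its decreasing refinement $A_k' = \bigcap_{j \leq k} A_j$, which does not affect the conclusion. Then, using the P-point property, I would choose $C \in \Ul$ with $C \subseteq^\ast A_k'$ for every $k$, and define $g(k) := \max(C \setminus A_k') + 1$ (with the convention that $g(k) = 0$ if $C \setminus A_k' = \emptyset$), so that $C \cap [g(k), \infty) \subseteq A_k'$. Next I would fix any strictly increasing $f: \Nat \to \Nat$ with $f(k) \geq g(k)$ and apply rapidity to obtain $B' \in \Ul$ with $f \leq \eta_{B'}$. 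Finally, I would set $B := B' \cap C \in \Ul$ and enumerate $B = \{a_0 < a_1 < \ldots\}$: since $B \subseteq B'$, one has $a_k \geq \eta_{B'}(k) \geq f(k) \geq g(k)$, and since $a_k \in C$ it follows that $a_k \in C \cap [g(k), \infty) \subseteq A_k' \subseteq A_k$, as required.

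The main subtlety I anticipate is the forward direction: one has to pass from the ``approximate'' containment $C \subseteq^\ast A_k'$ supplied by the P-point property to an honest element-wise containment $a_k \in A_k$, and this is exactly where rapidity enters by letting us push the enumerating function of our candidate set past the finite exceptional threshold $g(k)$ for every $k$ simultaneously. The minor bookkeeping of reducing to a decreasing sequence and ensuring that $g$ is dominated by a strictly increasing function is routine and I would not dwell on it.
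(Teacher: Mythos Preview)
Your proof is correct. The paper does not actually supply a proof of this lemma; it simply cites an external reference for the characterisation, so there is no argument in the paper to compare against. Your approach is the standard one: the reverse implication follows by specialising the diagonalisation hypothesis to decreasing sequences (yielding the P-point property) and to tails $A_k=[f(k),\infty)$ (yielding rapidity), while the forward implication first passes to the decreasing refinement $A_k'=\bigcap_{j\le k}A_j$, uses the P-point property to find a pseudo-intersection $C$, and then invokes rapidity to push the enumeration past the finite exceptional thresholds $g(k)$. The only point worth double-checking is the inequality $\eta_B(k)\ge\eta_{B'}(k)$ for $B\subseteq B'$, and that is indeed correct since a subset has a sparser enumeration.
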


\noindent
In the special case of ultrafilters, Theorem~\ref{mainx} and Theorem~\ref{main+} both reduce to:

\begin{corollary}$\,$\label{cor-ZFC}\\
    The set of UBP-ultrafilters coincides with the set of semi-selective ultrafilters. Moreover, if $\mathcal{U}$ is an ultrafilter and the uniform $\mathcal{U}$-boundedness principle holds for some infinite-dimensional Banach space~$X$, then $\mathcal{U}$ is semi-selective.
\end{corollary}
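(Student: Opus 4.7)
The plan is to derive Corollary~\ref{cor-ZFC} as a direct specialisation of the two characterisation theorems (Theorem~\ref{mainx} and Theorem~\ref{main+}) to the case of ultrafilters, by showing that for ultrafilters the two \mbox{$P^+$-type} properties collapse to a single property, and the resulting combined condition (rapid$^+$ + P-point) is exactly semi-selectivity.

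The key preliminary observation is that for every free ultrafilter $\Ul$ on $\Nat$, one has $\Ul^+ = \Ul$. Indeed, if $A \subseteq \Nat$ meets every element of $\Ul$, then $A^c \notin \Ul$, so by the ultrafilter property $A \in \Ul$; conversely every element of $\Ul$ is trivially \mbox{$\Ul$-sta}\-tion\-ary. Consequently the \mbox{$\Ul$-sta}\-tion\-ary sets are exactly the elements of $\Ul$, which makes the notions \zwakkePfilter\ and \sterkePfilter\ for $\Ul$ coincide and reduce to the P-point property: in both definitions the decreasing sequence $(A_k)_k$ now ranges over elements of $\Ul$, and the conclusion is the existence of some $B \in \Ul$ with $B \subseteq^\ast A_k$ for every $k$ (for the weak variant, the condition $B \subseteq I$ is automatically arrangeable by intersecting $B$ with the element $I \in \Ul$).

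Having established this collapse, I would invoke Theorem~\ref{mainx}: for an ultrafilter $\Ul$, the statement that $\Ul$ is a UBP-filter is equivalent to being both rapid$^+$ and a \sterkePfilter, i.e.\ a rapid$^+$ P-point, which is exactly the definition of semi-selective. This proves the first sentence of the corollary. The ``moreover'' clause is simply the implication $(1) \Rightarrow (3)$ of Theorem~\ref{mainx} together with the same collapse: the existence of one infinite-dimensional Banach space for which $UBP_\Ul(X,Y)$ holds already forces $\Ul$ to be a rapid$^+$ \sterkePfilter, hence semi-selective. (Alternatively, Theorem~\ref{main+} gives the analogous conclusion for the stationary principle, which is formally weaker and yields the same statement under the ultrafilter collapse.)

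I do not anticipate any genuine obstacle here: all substantive work has been done in the proofs of Theorems~\ref{mainx} and~\ref{main+}, and the only content specific to the ultrafilter case is the identification $\Ul^+ = \Ul$ and the resulting coincidence of the two \mbox{$P^+$-prop}\-er\-ties with the classical P-point property. The proof is therefore short and essentially bookkeeping; the only minor point of care is to verify that the parameter $I$ appearing in the definition of \zwakkePfilter\ is harmless in the ultrafilter setting, which follows immediately since $I \in \Ul$ and any $B \in \Ul$ can be replaced by $B \cap I \in \Ul$ without affecting the conditions $B \subseteq^\ast A_k$.
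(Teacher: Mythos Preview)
Your proposal is correct and matches the paper's own approach: the paper explicitly notes just before the corollary that for ultrafilters the weak and strong $P^+$-properties coincide with the P-point property, that rapid$^+$ P-points are called semi-selective, and then states the corollary as the direct specialisation of Theorems~\ref{mainx} and~\ref{main+} to ultrafilters. No separate argument is given in the paper beyond this reduction, so your write-up is essentially what the authors intend.
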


\noindent
This corollary extends a theorem by Benedikt (\cite[Proposition~3]{benedikt}) which expresses in the language of nonstandard analysis that every selective ultrafilter is a Banach-UBP-filter. An ultrafilter~$\Ul$ is selective if every $\Gamma:\Nat \to \Nat$ is either constant or injective on some $A\in \Ul.$ In particular, every selective ultrafilter is semi-selective.\\
Since the existence of selective ultrafilters follows from Martin's Axiom, the existence of UBP-ultrafilters is consistent with $\mathsf{ZFC}$  (provided the theory $\mathsf{ZFC}$ is itself consistent). In particular:
\begin{theorem}[\cite{halbeisen}]$\;$\\
If Martin's Axiom for countable posets (\,$\mathsf{MA}(countable)$) holds, then there exist \mbox{$2^{2^{\aleph_0}}$-many} non-isomorphic selective ultrafilters.
\end{theorem}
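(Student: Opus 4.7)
The argument splits into two independent pieces: first, constructing $2^{\mathfrak{c}}$ selective ultrafilters under $\mathsf{MA}(countable)$ (writing $\mathfrak{c} := 2^{\aleph_0}$), and second, counting isomorphism classes to extract $2^{\mathfrak{c}}$ pairwise non-isomorphic ones among them.

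For the construction, I would build a binary tree $(A_s)_{s \in 2^{<\mathfrak{c}}}$ of infinite subsets of $\Nat$, indexed by binary strings of length $<\mathfrak{c}$, so that $A_{s \frown 0}$ and $A_{s \frown 1}$ are almost disjoint infinite subsets of $A_s$ for every $s$. Enumerate as $(P_\alpha)_{\alpha < \mathfrak{c}}$ all functions $\Nat \to \Nat$, coding all partitions of $\Nat$ into countably many pieces. Arrange the tree so that whenever $|s| = \alpha + 1$, the set $A_s$ is either contained in a single piece of $P_\alpha$ or is a selector for $P_\alpha$ (meeting each piece in at most one point); by the selectivity criterion mentioned just before Corollary~\ref{cor-ZFC} (and its strengthening to countable partitions), each branch then generates a selective ultrafilter. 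At successor stage $\alpha$, given a branch initial segment $s$ with $|s| = \alpha$, I would apply $\mathsf{MA}(countable)$ to the countable poset of finite partial approximations $t \subseteq \Nat$ to an infinite set $X \subseteq \Nat$ that is homogeneous (single-piece or selector) for $P_\alpha$; the $|\alpha| < \mathfrak{c}$-many dense sets demand almost-containment in each $A_{s'}$ with $s' \preceq s$ and unboundedness beyond each $n \in \Nat$. A generic yields such an~$X$; splitting $X$ into two almost-disjoint infinite subsets that inherit $P_\alpha$-homogeneity defines $A_{s \frown 0}$ and $A_{s \frown 1}$. Limits along the tree are absorbed by the same construction with vacuous homogeneity demand.

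Each branch $f \in 2^{\mathfrak{c}}$ determines an ultrafilter $\mathcal{U}_f$ generated by $\{A_{f \restriction \alpha} : \alpha < \mathfrak{c}\}$; distinct branches yield distinct ultrafilters because almost-disjoint splits force the two children to generate incompatible filters. This gives $2^{\mathfrak{c}}$ selective ultrafilters. For non-isomorphism, any two isomorphic ultrafilters on $\Nat$ differ by a bijection of $\Nat$, and $|\Nat^{\Nat}| = \mathfrak{c}$, so each isomorphism class contains at most $\mathfrak{c}$ elements. Partitioning $2^{\mathfrak{c}}$ ultrafilters into classes of size $\leq \mathfrak{c}$ therefore produces at least $2^{\mathfrak{c}} = 2^{2^{\aleph_0}}$ classes.

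The principal obstacle is the successor-stage genericity step: one must identify the correct countable poset (typically finite partial selectors for $P_\alpha$, or finite subsets of a preferred piece, ordered by end-extension) and verify that at stage $\alpha$ the compatibility requirements with all earlier $A_{s'}$ along the branch, together with the $P_\alpha$-homogeneity requirement and unboundedness, can be encoded as strictly fewer than $\mathfrak{c}$ dense subsets so that $\mathsf{MA}(countable)$ applies uniformly across the tree. Once that machinery is in place, the branching at each node and the final counting are routine.
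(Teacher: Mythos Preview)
The paper gives no proof of this theorem; it is simply quoted from the cited reference. So your sketch is being compared against the standard argument rather than anything in the paper.

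There is a genuine gap at precisely the step you flag as ``the principal obstacle.'' You propose a countable poset of finite stems $t\subseteq\Nat$ (selectors, or finite subsets of a single piece) ordered by end-extension, and claim that the requirement ``$X\subseteq^\ast A_{s'}$'' for each earlier node $s'\preceq s$ can be encoded as a dense set. It cannot: almost-containment is a $\Pi^0_2$ condition on $X$, and in a poset of finite stems ordered by end-extension every condition can be extended by a point outside any fixed $A_{s'}$, so no dense set forces cofinitely many elements of the generic into $A_{s'}$. What such a poset \emph{does} give, via the dense sets $D_{s',n}=\{t:|t\cap A_{s'}|>n\}$, is an $X$ with $|X\cap A_{s'}|=\omega$ for every $s'\preceq s$; that is, $X$ is \emph{positive} for the branch filter, not a pseudo-intersection of the branch tower. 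This distinction matters: $\mathsf{MA}(countable)$ is equivalent to $\mathrm{cov}(\mathcal M)=\mathfrak{c}$, and in the model obtained by adding $\aleph_2$ Cohen reals over a model of $\mathsf{CH}$ one has $\mathrm{cov}(\mathcal M)=\mathfrak{c}=\aleph_2$ but $\mathfrak p=\aleph_1$, so towers of length $\aleph_1$ without pseudo-intersection exist and your recursion, as written, can stall.

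The standard repair is to abandon the $\subseteq^\ast$-tower picture and carry along each branch a \emph{filter} $\mathcal F_s$ generated by the sets chosen so far; at stage $\alpha$ one uses $\mathsf{MA}(countable)$ exactly as above to produce a homogeneous set that is $\mathcal F_s$-positive, then adjoins it. The two-child splitting then requires an additional argument that $\mathcal F_s$ (being generated by $<\mathfrak c$ sets and built generically) is not yet an ultrafilter, so that some set and its complement are both positive---this is where the construction needs more care than your outline indicates, and is typically handled either by interleaving an independent family into the recursion or by a separate genericity bookkeeping. Your counting of isomorphism classes at the end is fine.
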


\noindent
It follows that $\mathsf{MA}(countable)$ also implies the existence of $2^{2^{\aleph_0}}$-many non-isomorphic UBP-ultrafilters. $\mathsf{MA}(countable)$ is a proper weakening of $\mathsf{MA}$ and is equivalent to the statement that there is no way of writing $\mathbb{R}$ as a union $\bigcup_{\alpha<\kappa} N_\alpha$ of $\kappa < 2^{\aleph_0}$ meager sets $N_\alpha \subseteq \mathbb{R}$ (see \cite{halbeisen}). \smallskip

\noindent
However, since the existence of P-points as well as the existence of rapid$^+$ ultrafilters is independent of the axioms of $\mathsf{ZFC}$-set theory, it follows that there are also no $\mathsf{ZFC}$-proofs of the existence of UBP-ultrafilters (again, of course, assuming consistency of $\mathsf{ZFC}$).
In particular, examples of models in which there do not exist UBP-ultrafilters include models obtained by adding $\aleph_2$ random reals to a model of $\mathsf{ZFC} + \mathsf{GCH}$~\cite{kunen}, Laver's model for the Borel conjecture \cite{miller} and Shelah's model for the absence of P-points \cite{shelah}.\smallskip

\subsection{Products and Quotients}
Theorem~\ref{mainx} and Theorem~\ref{main+} leave open the possibility that the uniform \mbox{$\Fil$-bound}\-ed\-ness principle could hold for some infinite-dimensional Fr\'echet space without $\Fil$ being a UBP-filter. We will now show that this can indeed occur. To this purpose, we first indicate how the validity of the uniform \mbox{$\Fil$-bound}\-ed\-ness principle behaves under countable products and quotients of Fr\'echet spaces.
\smallskip

\noindent
Let $\pi: X \to Y$ be a quotient map between locally convex spaces \cite[p.\ 33]{treves}. Then, every pointwise \mbox{$\Fil$-bounded} sequence $(T_i)_i$ of continuous linear maps from $Y$ to a locally convex space~$Z$ induces the sequence $(T_i\circ \pi )_i \in \mathcal{L}(X,Z),$ which is still pointwise \mbox{$\Fil$-bounded}. Moreover, $(T_i)_i$ is \mbox{$\Fil$-equi}\-con\-tin\-uous if and only if the sequence $(T_i\circ \pi )_i$ is \mbox{$\Fil$-equi}\-con\-tin\-uous.
This proves the following lemma.

\begin{lemma}$\;$ \label{quotienten}\\
    If $\pi: X \to Y$ is a quotient map between locally convex spaces and the uniform \mbox{$\Fil$-bound}\-ed\-ness principle holds for $X,$ then the uniform \mbox{$\Fil$-bound}\-ed\-ness principle holds for $Y$ as well.
\end{lemma}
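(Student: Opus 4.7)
The plan is to unpack the hint given in the paragraph just before the statement and turn it into a short argument. Fix a locally convex space $Z$ and let $(T_i)_i \in PW_\Fil(Y,Z)$, i.e.\ a pointwise $\Fil$-bounded sequence in $\mathcal{L}(Y,Z)$. I want to show that $(T_i)_i \in EQ_\Fil(Y,Z)$, so I would form the composed sequence $(T_i \circ \pi)_i$ in $\mathcal{L}(X,Z)$ and play everything back through $\pi$.

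First I would verify that $(T_i \circ \pi)_i$ still lies in $PW_\Fil(X,Z)$. This is immediate: for any $x \in X$, the element $\pi(x)$ is just a point of $Y$, and by pointwise $\Fil$-boundedness of $(T_i)_i$ the sequence $(T_i(\pi(x)))_i = ((T_i \circ \pi)(x))_i$ is $\Fil$-bounded in $Z$. Invoking the uniform $\Fil$-boundedness principle for $X$, I conclude that $(T_i \circ \pi)_i$ is $\Fil$-equicontinuous. Hence for every $0$-neighbourhood $W$ in $Z$ there is a $0$-neighbourhood $U$ in $X$ with $T_i[\pi[U]] \subseteq W$ for $(\forall_{\!\Fil}\, i \in \Nat)$.

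To finish, I would push $U$ forward to $Y$. Since $\pi$ is a quotient map between locally convex spaces, it is an open surjection (quotient maps in the category of topological vector spaces are quotients by a closed linear subspace, and in particular open at $0$). Therefore $V := \pi[U]$ is a $0$-neighbourhood in $Y$, and the displayed inclusion reads $T_i[V] \subseteq W$ for $(\forall_{\!\Fil}\, i \in \Nat)$. This is exactly $\Fil$-equicontinuity of $(T_i)_i$, so $UBP_\Fil(Y,Z)$ holds for the arbitrary locally convex $Z$.

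There is essentially no obstacle here; the only subtlety worth flagging is the use of openness of $\pi$, which is what allows a $0$-neighbourhood in $X$ to be transported to a genuine $0$-neighbourhood in $Y$ (otherwise one would only obtain equicontinuity of $(T_i)_i$ on the image $\pi[U]$, which need not be a neighbourhood without the quotient hypothesis). The analogous statement with the stationary principle $UBP^{\text stat}_\Fil$ in place of $UBP_\Fil$ would go through identically, as the same $\pi[U]$ works for any stationary index set supplied for $(T_i)_i$.
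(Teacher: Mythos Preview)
Your proof is correct and follows precisely the approach sketched in the paragraph preceding the lemma in the paper: compose with $\pi$, use the UBP for $X$, and exploit openness of the quotient map to push the resulting $0$-neighbourhood forward to $Y$. The paper's proof is simply a terser statement of the same argument, with the openness step absorbed into the remark that $(T_i)_i$ is $\Fil$-equicontinuous if and only if $(T_i\circ\pi)_i$ is.
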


\noindent
Likewise, we have:

\begin{lemma}$\;$ \label{quotienten+}\\
    If $\pi: X \to Y$ is a quotient map between locally convex spaces and the stationary uniform \mbox{$\Fil$-bound}\-ed\-ness principle holds for $X,$ then the stationary uniform \mbox{$\Fil$-bound}\-ed\-ness principle holds for $Y$ as well.
\end{lemma}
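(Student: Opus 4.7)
The plan is to mimic the short argument sketched just before Lemma~\ref{quotienten} and adapt the two key compatibilities (pointwise $\Fil$-boundedness and $\Fil$-equicontinuity of $(T_i)_i$ versus $(T_i \circ \pi)_i$) to the stationary setting. Concretely, I would start from a pointwise $\Fil$-bounded sequence $(T_i)_i$ in $\mathcal{L}(Y,Z)$, where $Z$ is an arbitrary locally convex space, and consider the lifted sequence $(T_i \circ \pi)_i$ in $\mathcal{L}(X,Z)$. Since $\pi$ is surjective, for every $x \in X$ we have $T_i(\pi(x)) = T_i(\pi(x))$, so pointwise $\Fil$-boundedness of $(T_i)_i$ at $\pi(x)$ gives pointwise $\Fil$-boundedness of $(T_i \circ \pi)_i$ at $x$.

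Next, I would invoke the assumed stationary uniform $\Fil$-boundedness principle for $X$: for any $\Fil$-stationary $I \subseteq \Nat$ and any $0$-neighbourhood $W$ in $Z$, there exist a $0$-neighbourhood $U$ in $X$ and an $\Fil$-stationary $J \subseteq I$ such that $T_i[\pi[U]] = (T_i\circ \pi)[U] \subseteq W$ for every $i \in J$. The key point is then to set $V := \pi[U]$ and observe that this is a $0$-neighbourhood in $Y$. This follows because $\pi$ is a quotient map in the sense of \cite[p.~33]{treves}, hence open (the quotient topology on $Y$ is precisely the final topology that makes $\pi$ continuous and the vector space operations continuous, and this topology has $\{\pi[U] : U \text{ a $0$-nbhd in } X\}$ as a $0$-neighbourhood base). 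With this $V$ we obtain $T_i[V] \subseteq W$ for all $i \in J$, which is exactly $\Fil$-stationary-equicontinuity of $(T_i)_i$.

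There is no genuine obstacle here: the argument is word-for-word the same as for Lemma~\ref{quotienten}, merely carrying the extra quantifier ``for every $\Fil$-stationary $I$'' through the implication and preserving the $\Fil$-stationary witness $J \subseteq I$ returned by the hypothesis on~$X$. The only thing worth flagging explicitly is the openness of the quotient map, which is what ensures $\pi[U]$ is a $0$-neighbourhood in $Y$; everything else is formal transport along $\pi$.
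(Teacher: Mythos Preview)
Your proposal is correct and follows exactly the same route as the paper: the paper proves Lemma~\ref{quotienten+} by the word ``Likewise'' after the short argument you paraphrase, i.e.\ by lifting $(T_i)_i$ to $(T_i\circ\pi)_i$, using pointwise $\Fil$-boundedness, applying the hypothesis on $X$, and pushing the resulting $0$-neighbourhood forward via the open map~$\pi$. The only addition you make explicit---carrying the $\Fil$-stationary set $I$ and the witness $J\subseteq I$ through the argument---is precisely the adaptation the paper leaves implicit.
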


\noindent
For every Fr\'echet space~$X$ admitting an infinite-dimensional normable quotient, we can now characterise those filters $\Fil$ for which the (stationary) uniform \mbox{$\Fil$-bound}\-ed\-ness principle holds for $X.$

\begin{lemma}$\;$ \label{normeerbaar quotient}\\
    Let $X$ be a Fr\'echet space with an infinite-dimensional normable quotient.
    \begin{itemize}
        \item The uniform \mbox{$\Fil$-bound}\-ed\-ness principle holds for $X$ if and only if $\Fil$ is a UBP-filter.
        \item The stationary uniform \mbox{$\Fil$-bound}\-ed\-ness principle holds for $X$ if and only if $\Fil$ is a {stationary} UBP-filter.
    \end{itemize}
\end{lemma}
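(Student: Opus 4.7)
The plan is to deduce both equivalences by reducing to the already-characterised (stationary) UBP-filters of Theorem~\ref{mainx} and Theorem~\ref{main+}, using the infinite-dimensional normable quotient as the link.

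For the "if" directions, there is essentially nothing to do: if $\Fil$ is a UBP-filter, then by Theorem~\ref{mainx} the uniform $\Fil$-boundedness principle holds for \emph{every} Fr\'echet space, hence in particular for $X$; and similarly, if $\Fil$ is a stationary UBP-filter, Theorem~\ref{main+} delivers the stationary uniform $\Fil$-boundedness principle for~$X$.

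For the "only if" directions, I would fix an infinite-dimensional normable quotient $Y$ of $X$. First I would observe that $UBP_{\Fil}(X,\cdot)$ implies $UBP^{\text{stat}}_{\Fil}(X,\cdot)$: indeed, if $(T_i)_i$ is $\Fil$-equi\-continuous via some $0$-neighbourhood $U$ and $F \in \Fil$ witnessing $T_i[U]\subseteq V$ on $F$, then for any $\Fil$-stationary $I$, the set $J:=F\cap I$ is $\Fil$-stationary and witnesses stationary equicontinuity. Next, by Lemma~\ref{quotienten} (respectively Lemma~\ref{quotienten+}), the (stationary) uniform $\Fil$-boundedness principle passes from $X$ to its quotient $Y$, which equipped with a norm inducing its topology is an infinite-dimensional normed space. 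For the first bullet, Lemma~\ref{Nec P}(a) then gives that $\Fil$ is a $P^+$-filter, and Lemma~\ref{nec rapid} (applied via the aforementioned passage from UBP to stationary UBP on $Y$) gives that $\Fil$ is rapid$^+$, so $\Fil$ is a UBP-filter by Theorem~\ref{mainx}. For the second bullet, Lemma~\ref{Nec P}(b) gives the weak $P^+$-property and Lemma~\ref{nec rapid} again gives rapid$^+$, so $\Fil$ is a stationary UBP-filter by Theorem~\ref{main+}.

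There is no real obstacle in this argument; the entire proof is a bookkeeping exercise combining three previously established ingredients (the behaviour of UBP under quotients, the necessity lemmas for normed spaces, and the characterisation theorems). The only micro-point worth flagging explicitly is the trivial implication $UBP_{\Fil}\Rightarrow UBP^{\text{stat}}_{\Fil}$, which is needed to invoke the rapid$^+$ necessity lemma in the non-stationary bullet, since Lemma~\ref{nec rapid} is stated for the stationary principle only.
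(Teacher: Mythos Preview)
Your proof is correct and follows essentially the same route as the paper: pass the (stationary) UBP to the infinite-dimensional normable quotient via Lemma~\ref{quotienten}/\ref{quotienten+}, then invoke the necessity lemmas for normed spaces and conclude with the characterisation theorems. The only cosmetic difference is that the paper extracts the (weak) $P^+$-property directly from $X$ using Lemmas~\ref{P  voor niet normeerbaar} and~\ref{Nec P} together (to cover both the normable and non-normable case), whereas you first pass to $Y$ and then apply Lemma~\ref{Nec P} alone; your explicit mention of the implication $UBP_{\Fil}\Rightarrow UBP^{\text{stat}}_{\Fil}$, needed to invoke Lemma~\ref{nec rapid} in the first bullet, is a nice point the paper leaves tacit.
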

\begin{proof}
    The other direction being evident from the definition of ({stationary}) UBP-filter, it suffices to prove that $\Fil$ is a ({stationary}) UBP-filter whenever the (stationary) uniform \mbox{$\Fil$-bound}\-ed\-ness principle holds for $X$.\\ Suppose that the (stationary) uniform \mbox{$\Fil$-bound}\-ed\-ness principle holds for $X$. Then $\Fil$ is a (weak) {\sterkePfilter}, by Lemmas~\ref{P  voor niet normeerbaar} and \ref{Nec P}. Because of Lemma~\ref{quotienten} and Lemma~\ref{quotienten+}, validity of the (stationary) uniform \mbox{$\Fil$-bound}\-ed\-ness principle will be inherited by the infinite-dimensional normable quotient of $X,$ so that it follows from Lemma~\ref{nec rapid} that $\Fil$ is also rapid$^+$. The conclusion follows from Theorem~\ref{Frechetsuf}.
\EndProof
\end{proof}

\noindent        
As long as $\Fil$ is a (weak) {\sterkePfilter}, the (stationary) uniform \mbox{$\Fil$-bound}\-ed\-ness principle is also preserved under countable products of Fr\'echet spaces.

\begin{lemma}$\;$ \label{producten}\\
    Let $\Fil$ be a {\sterkePfilter}.
    If $(X_i)_{i\in \Nat}$ is a sequence of (possibly finite-dimensional) Fr\'echet spaces and
    $$X = \prod_{i\in \Nat} X_i,$$
    then the uniform \mbox{$\Fil$-bound}\-ed\-ness principle holds for $X$ if and only if  the uniform \mbox{$\Fil$-bound}\-ed\-ness principle holds for every one of the spaces $X_i.$
\end{lemma}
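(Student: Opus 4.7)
The ``only if'' direction is immediate: each projection $\pi_i : X \to X_i$ is a quotient map, so Lemma~\ref{quotienten} transfers UBP$_\Fil$ from $X$ to each factor. For the converse I would proceed in two stages. First, \emph{UBP$_\Fil$ is preserved under finite products}: given $(T_n)$ pointwise $\Fil$-bounded in $\mathcal{L}(X_1 \times X_2, Y)$, apply UBP$_\Fil$ separately to the two restrictions $T_n(\cdot,0)$ and $T_n(0,\cdot)$; for each 0-neighbourhood $V$ in $Y$ pick a balanced $V'$ with $V'+V' \subseteq V$, form the product of the two resulting 0-neighbourhoods in $X_1 \times X_2$, and intersect the two corresponding members of $\Fil$. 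Iterating, UBP$_\Fil$ holds on every $\prod_{i \leq N} X_i$. This stage uses only closure of $\Fil$ under finite intersection, not the $P^+$-hypothesis.

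For the infinite product, let $(T_n)$ be pointwise $\Fil$-bounded in $\mathcal{L}(X,Y)$, fix a continuous seminorm $p$ on $Y$, and set $q_n := p \circ T_n$. Because every continuous seminorm on the product $X$ depends on only finitely many coordinates, I define $k_n$ as the smallest integer such that $q_n$ vanishes on $X^{>k_n} := \{x \in X : x_i = 0 \text{ for } i \leq k_n\}$. The \emph{key claim} is that $n \mapsto k_n$ is bounded on some $F \in \Fil$. Granted this claim, the seminorms $(q_n)_{n \in F}$ all factor through the projection $\pi_{\leq K} : X \to \prod_{i \leq K} X_i$ onto a finite sub-product, where UBP$_\Fil$ (from the previous stage) supplies $\Fil$-equicontinuity; pulling back along $\pi_{\leq K}$ finishes the proof.

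The main obstacle is the key claim. Suppose it fails. By Lemma~\ref{alternatievekaraktrerisatiePx}, the $P^+$-property supplies an $\Fil$-stationary set $A$ on which $n \mapsto k_n$ is finite-to-one; in particular the blocks $A_k := \{n \in A : k_n = k\}$ are all finite. By minimality of $k_n$, each $q_n$ is unbounded on the embedded factor $E_{k_n} := \{x \in X : x_i = 0 \text{ for } i \neq k_n\} \cong X_{k_n}$. My goal is to produce $x \in X$ with $q_n(x) \to \infty$ along $A$, contradicting pointwise $\Fil$-boundedness at $x$. I would enumerate the distinct values $\{k_n : n \in A\}$ in increasing order as $k^{(1)} < k^{(2)} < \cdots$ and inductively choose $y_j \in E_{k^{(j)}}$. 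Since $A_{k^{(j)}}$ is finite and each $q_n|_{E_{k^{(j)}}}$ ($n\in A_{k^{(j)}}$) is an unbounded seminorm on $E_{k^{(j)}}$, Lemma~\ref{normengeljkgroot} (with $p=0$) lets me pick a single $y_j$ with
$$ q_n(y_j) \;\geq\; 2^n + q_n(y_1 + \cdots + y_{j-1}) \qquad (\forall n \in A_{k^{(j)}}). $$
Setting $x := \sum_j y_j$, this sum is coordinatewise well defined as the $y_j$'s have pairwise disjoint supports. Because $q_n$ ignores coordinates above $k_n = k^{(j)}$ for $n \in A_{k^{(j)}}$, one computes $q_n(x) = q_n(y_1 + \cdots + y_j) \geq q_n(y_j) - q_n(y_1 + \cdots + y_{j-1}) \geq 2^n$, so $q_n(x) \to \infty$ on the stationary set $A$.

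The subtlety lies in processing the blocks in increasing order of $k^{(j)}$: future choices are then supported on strictly higher coordinates and hence invisible to the seminorms already treated, preventing earlier estimates from being spoiled. The full $P^+$-property (rather than its weak version) is essential precisely because it yields a \emph{single} $\Fil$-stationary set on which $k_n$ is finite-to-one, enabling the uniform block-by-block construction above.
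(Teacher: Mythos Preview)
Your proof is correct and essentially parallels the paper's argument, but with one genuine difference worth noting. The paper does not reprove the key claim from scratch: it simply invokes Lemma~\ref{puntsgewijsbegrensddanookopomgeving} (valid for \emph{any} Fr\'echet space and any $P^+$-filter) to obtain a $0$-neighbourhood $B$ and $F\in\Fil$ with each $q_j$ ($j\in F$) bounded on $B$; since $B$ contains a cylinder depending on coordinates $\le N$, the components $P^j_i$ vanish for $i>N$ and $j\in F$, and UBP$_\Fil$ is applied factor-by-factor. Your argument instead re-derives the analogue of Lemma~\ref{puntsgewijsbegrensddanookopomgeving} in situ, replacing the completeness-based construction (Lemma~\ref{nietlegedoorsnedepools}) by a direct coordinatewise assembly $x=\sum_j y_j$ exploiting the disjoint supports of the $y_j$. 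This is a legitimate and somewhat more elementary route in the product setting: you never use completeness of $X$, only the algebraic product structure together with Lemma~\ref{normengeljkgroot}. The paper's version is shorter because the heavy lifting was already done earlier; yours is more self-contained. One small point: when $q_n\equiv 0$ you should set $k_n$ to a fixed value (say $0$) and observe that only finitely many such $n$ can lie in $A$, so they do not interfere with the final contradiction.
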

\begin{proof}
    Let $Y$ be an arbitrary locally convex space and let $Q_i$ be a base of continuous seminorms for $X_i$.\\
    \framebox{$\Rightarrow$}
    Since the projection on the $i$-th coordinate is a quotient map from $X$ to $X_i$, this direction follows from Lemma~\ref{quotienten}.\\
    \framebox{$\Leftarrow$}
    For the other direction, let $(T_j)_j$ be a pointwise \mbox{$\Fil$-bounded} sequence in $\mathcal{L}(X,Y)$. In order to show that $(T_j)_j$ is \mbox{$\Fil$-equi}\-con\-tin\-uous, it suffices to show that for an arbitrary continuous seminorm $r$ on $Y$, $(T_j)_j$ is \mbox{$\Fil$-equi}\-con\-tin\-uous in $\mathcal{L}(X, (Y,r))$. Note that, making use of the quotient mapping from $(Y,r)$ onto the normed space $(Y / \ker(r), r)$, we can assume that $r$ is a norm. We will thus write $Y$ for the normed space $(Y,r)$ in the rest of this proof. Observe that then
    $$ \mathcal{L}(X,Y) = \mathcal{L}(\prod_{i\in \Nat} X_i,Y) = \bigoplus _{i\in \Nat} \mathcal{L}(X_i,Y),$$
    in particular, for every $j$ we find $(P^j_i)_{0\leq i \leq n_j},$ with $P^j_i \in \mathcal{L}(X_i,Y)$,  such that
    \begin{equation}T_j( (x_n)_n ) = \sum_{0\leq i \leq n_j} P^j_i(x_i), \qquad \text{for all } (x_n)_n \in X.\tag{$\ast$}\label{eq:1}\end{equation}
    Since $\Fil$ is a {\sterkePfilter} and $X$ is a Fr\'echet space, it follows from Lemma~\ref{puntsgewijsbegrensddanookopomgeving} that there exists a \mbox{$0$-neigh}\-bour\-hood $B$ in $X$ and $F\in \Fil$ such that
    $$\sup_{x\in B} r(T_j(x)) < \infty, \qquad \text{for every } j \in F.$$
    Because $B$ is a \mbox{$0$-neigh}\-bour\-hood in the product space~$X$, there exist $N \in \Nat$, $c > 0$ and $q_0, \ldots,q_N$ with $q_i \in Q_i$ such that
    $$\bigcap_{i \leq N} \{ (x_n)_n \in X : q_i(x_i)\leq c \} \subseteq B.$$  Since every mapping $r \circ T_j,$ with $j\in F,$ is also bounded on this $B,$ one finds that $r \circ P^j_i$ is necessarily identically zero for every  $i > N$ and every $j \in F.$ It is therefore sufficient to prove that the sequence $(P^j_i)_j$ is \mbox{$\Fil$-equi}\-con\-tin\-uous for every $i \leq N$. It follows from~(\ref{eq:1}) that the sequence $(P^j_i)_j$ is pointwise \mbox{$\Fil$-bounded} for every $i$, so we can conclude \mbox{$\Fil$-equi}\-con\-tin\-uity of this sequence by applying the uniform \mbox{$\Fil$-bound}\-ed\-ness principle for the space~$X_i.$
\EndProof
\end{proof}

\noindent
With an analogous argument, one proves the corresponding lemma for the stationary uniform \mbox{$\Fil$-bound}\-ed\-ness principle.

\begin{lemma}$\;$ \label{producten+}\\
    Let $\Fil$ be a {\zwakkePfilter}.
    If $(X_i)_{i\in \Nat}$ is a sequence of (possibly finite-dimensional) Fr\'echet spaces and
    $$X = \prod_{i\in \Nat} X_i,$$
    then the stationary uniform \mbox{$\Fil$-bound}\-ed\-ness principle holds for $X$ if and only if the stationary uniform \mbox{$\Fil$-bound}\-ed\-ness principle holds for every one of the spaces $X_i.$
\end{lemma}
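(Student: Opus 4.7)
The plan is to closely mimic the proof of Lemma~\ref{producten}, systematically replacing each use of a ``strong'' tool by its stationary counterpart: Lemma~\ref{quotienten} by Lemma~\ref{quotienten+}, Lemma~\ref{puntsgewijsbegrensddanookopomgeving} by Lemma~\ref{puntsgewijsbegrensddanookopomgevingstat}, and the uniform $\Fil$-boundedness principle for each $X_i$ by its stationary counterpart. The direction $\Rightarrow$ is then immediate from Lemma~\ref{quotienten+} applied to the coordinate projections $X \to X_i$, which are quotient maps.

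For the direction $\Leftarrow$, let $(T_j)_j$ be a pointwise $\Fil$-bounded sequence in $\mathcal{L}(X,Y)$ for a locally convex space~$Y$, and let $I \subseteq \Nat$ be $\Fil$-stationary. As in Lemma~\ref{producten}, it is enough to treat each continuous seminorm $r$ on $Y$ separately and then, via the normed quotient $Y/\ker(r)$, to reduce to the case where $Y$ is a normed space (with norm~$r$); we then obtain a decomposition $T_j((x_n)_n) = \sum_{0 \leq i \leq n_j} P^j_i(x_i)$ with $P^j_i \in \mathcal{L}(X_i, Y)$. Since $\Fil$ is a weak $P^+$-filter, Lemma~\ref{puntsgewijsbegrensddanookopomgevingstat} applied to the pointwise $\Fil$-bounded sequence of continuous seminorms $(r \circ T_j)_j$ provides an $\Fil$-stationary $J_0 \subseteq I$ together with a basic $0$-neighbourhood $B = \bigcap_{i \leq N} \{(x_n)_n \in X : q_i(x_i) \leq c\}$ in $X$ on which every $T_j$ with $j \in J_0$ is bounded (with bounds possibly depending on $j$). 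The same argument as in Lemma~\ref{producten} then forces $r \circ P^j_i \equiv 0$ whenever $j \in J_0$ and $i > N$.

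It remains to prune $J_0$ to a stationary subset $J \subseteq J_0$ on which, for every $i \leq N$ simultaneously, the sequence $(P^j_i)_j$ becomes uniformly bounded on some $0$-neighbourhood $U_i$ of $X_i$. For each fixed $i$, the sequence $(P^j_i)_j$ is pointwise $\Fil$-bounded in $\mathcal{L}(X_i,Y)$ (test $(T_j)_j$ on vectors supported on the $i$-th coordinate). Inductively applying the stationary uniform $\Fil$-boundedness principle for $X_i$ to $(P^j_i)_j$ with stationary index set $J_k$ yields an $\Fil$-stationary $J_{k+1} \subseteq J_k$ and a $0$-neighbourhood $U_i$ in $X_i$ such that $(P^j_i)_{j \in J_{k+1}}$ is uniformly bounded on $U_i$. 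After $N+1$ such steps, the product $0$-neighbourhood $U_0 \times \cdots \times U_N \times \prod_{i > N} X_i$ in $X$ witnesses the desired stationary $\Fil$-equicontinuity of $(T_j)_j$ along $J := J_{N+1} \subseteq I$.

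The main point to watch is that the iterative shrinking from $J_0$ down to $J_{N+1}$ really preserves $\Fil$-stationarity at each step; this is automatic, since the stationary UBP for $X_i$ at step $k$ explicitly produces a stationary subset of the already stationary $J_k$. Apart from this bookkeeping, the argument is a straightforward transfer of the proof of Lemma~\ref{producten} to the stationary setting, exploiting that Lemma~\ref{puntsgewijsbegrensddanookopomgevingstat} only requires $\Fil$ to be a weak $P^+$-filter rather than a $P^+$-filter.
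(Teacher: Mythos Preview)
Your proposal is correct and is precisely the ``analogous argument'' the paper alludes to (the paper gives no detailed proof of Lemma~\ref{producten+}). The one genuine adaptation you correctly identify is that, since finite intersections of $\Fil$-stationary sets need not be $\Fil$-stationary, the single intersection $F = F \cap \bigcap_{i \le N} F_i$ implicit at the end of the proof of Lemma~\ref{producten} must be replaced by an iterative shrinking $J_0 \supseteq J_1 \supseteq \cdots \supseteq J_{N+1}$ through successive applications of the stationary UBP for each~$X_i$.
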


\bigskip

\noindent
As a corollary, we find an example of an infinite-dimensional Fr\'echet space for which the (stationary) uniform \mbox{$\Fil$-bound}\-ed\-ness principle can hold for filters $\Fil$ which are not ({stationary}) UBP-filters.

\begin{corollary}\label{RN} $\;$\\
    For the sequence space $\mathbb{R}^\Nat$, equipped with the product topology,
    \begin{itemize}
        \item the uniform \mbox{$\Fil$-bound}\-ed\-ness principle holds if and only if $\Fil$ is a {\sterkePfilter},
        \item the stationary uniform \mbox{$\Fil$-bound}\-ed\-ness principle holds if and only if $\Fil$ is a {\zwakkePfilter}.
    \end{itemize}
    
\end{corollary}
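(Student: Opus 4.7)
The plan is to derive Corollary~\ref{RN} as an immediate combination of the product lemmas (Lemma~\ref{producten} and Lemma~\ref{producten+}) with the already-established necessary conditions for (stationary) UBP in the non-normable metrizable locally convex setting (Lemma~\ref{P  voor niet normeerbaar}). I will treat the two bullets in parallel, since the arguments are structurally identical, just with ``\sterkePfilter'' and ``UBP'' replaced by ``\zwakkePfilter'' and ``stationary UBP''.

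First I would record the relevant soft properties of $X := \mathbb{R}^\Nat$ equipped with the product topology: it is a Fr\'echet space (hence in particular metrizable and locally convex), it is infinite-dimensional, and it fails to be normable (for instance because no $0$-neighbourhood of product type is bounded). This is the setting in which Lemma~\ref{P  voor niet normeerbaar} applies.

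For the ``only if'' direction of the first bullet, I would simply invoke Lemma~\ref{P  voor niet normeerbaar}(1) with this $X$: if $UBP_{\Fil}(X,Y)$ holds for every locally convex $Y$, then $\Fil$ is a \sterkePfilter. For the ``if'' direction, I would write $X = \prod_{i \in \Nat} \mathbb{R}$ as a countable product of (finite-dimensional) Fr\'echet spaces and apply Lemma~\ref{producten}, which is available because $\Fil$ is assumed to be a \sterkePfilter. That lemma reduces validity of the uniform \mbox{$\Fil$-bound}\-ed\-ness principle for $X$ to its validity for each factor $\mathbb{R}$. But UBP for $\mathbb{R}$ (or any finite-dimensional Fr\'echet space) is an easy consequence of the fact that every linear map from a finite-dimensional space to a locally convex space is continuous, and any pointwise \mbox{$\Fil$-bounded} sequence of such maps is \mbox{$\Fil$-equi}\-con\-tin\-uous by expanding along a finite basis, intersecting finitely many sets in $\Fil,$ and using the balanced-convex hull of the bounds; this part I would simply state as a remark, without belabouring the calculation.

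The second bullet is obtained by the same two-step recipe: Lemma~\ref{P  voor niet normeerbaar}(2) gives the necessity of $\Fil$ being a \zwakkePfilter, while Lemma~\ref{producten+} (valid since $\Fil$ is a \zwakkePfilter) together with the trivial stationary UBP for each finite-dimensional factor yields sufficiency. I do not anticipate any substantial obstacle: the corollary is essentially a bookkeeping argument, and the only thing to check carefully is that the two product lemmas are indeed applicable in the exact form stated, and that the trivial UBP on $\mathbb{R}$ (for an arbitrary filter) is formulated for every locally convex target, which is what the definitions of (stationary) UBP-filter require.
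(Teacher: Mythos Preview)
Your proposal is correct and matches the paper's own proof essentially line for line: the paper also derives the ``only if'' directions from Lemma~\ref{P  voor niet normeerbaar} (non-normability of $\mathbb{R}^\Nat$) and the ``if'' directions from Lemma~\ref{producten} resp.\ Lemma~\ref{producten+} applied to $\mathbb{R}^\Nat=\prod_{i\in\Nat}\mathbb{R}$, noting that the (stationary) uniform $\Fil$-boundedness principle holds trivially for the one-dimensional factor $\mathbb{R}$.
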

\begin{proof}
Because $\mathbb{R}^\Nat$ is not normable, the {\it only if} directions follow from Lemma~\ref{P  voor niet normeerbaar}.\\
But the (stationary) uniform \mbox{$\Fil$-bound}\-ed\-ness principle holds trivially for the space~$\mathbb{R},$ so it follows from the previous theorem that the (stationary) uniform \mbox{$\Fil$-bound}\-ed\-ness principle will also hold for $\mathbb{R}^\Nat,$ whenever $\Fil$ is a (weak) {\sterkePfilter}.
\EndProof
\end{proof}

\noindent
We have thus obtained a characterisation of the (weak) {\sterkePfilter s} in terms of the  topological vector space structure on $\mathbb{R}^\Nat.$\bigskip

\noindent
The following Remark~\ref{remKothe} and Example~\ref{exCm} each exhibit an illustration of Lemma~\ref{normeerbaar quotient}.

\begin{remark} \label{remKothe}
Since $\mathbb{R}^\mathbb{N}$ is a Fr\'echet-Montel space, Corollary~\ref{RN} prompts the question whether every Fr\'echet-Montel space necessarily satisfies the uniform \mbox{$\Fil$-bound}\-ed\-ness principle for every {\sterkePfilter}  $\Fil$. It follows from Lemma~\ref{normeerbaar quotient} that this is not the case as there exist Fr\'echet-Montel spaces admitting infinite dimensional Banach quotients. (See \cite[11.6.4]{jarchow} for an example of a K\"othe sequence space that is Fr\'echet-Montel but surjects continuously to $\ell_1$.)
\end{remark}

\begin{example}$\;$ \label{exCm}\\
    By Lemma~\ref{normeerbaar quotient}, the spaces of $m$-times continuously differentiable functions\linebreak $C^m(\Omega)$,
    with $\Omega \subseteq \mathbb{R}^n$ open and $m\in \mathbb{N}$ satisfy the (stationary) uniform \mbox{$\Fil$-bound}\-ed\-ness principle if and only if $\Fil$ is a
    (stationary) UBP-filter. Indeed, the \mbox{Banach} spaces $\mathcal{E}^m(K)$ of Whitney differentiable functions on compact convex
    $K \subseteq \Omega$ appear as quotients of the spaces $C^m(\Omega)$
     (cf.\ \cite[Theorem 2.2]{frerick}).
\end{example}
\bigskip

\noindent
In contrast to the spaces $C^m(\Omega)$ ($m\in\mathbb{N}$) in the previous example, the space $C^\infty(\Omega)$ belongs to the class of Fr\'echet-Schwartz spaces (just as the space~$\mathbb{R}^\Nat$). As every Hausdorff quotient of a Schwartz space is again Schwartz and the only normable Schwartz spaces are finite-dimensional, no infinite-dimensional quotient of a Schwartz space can be normable.
By consequence, if $X$ is Schwartz, Lemma~\ref{normeerbaar quotient} does not carry any extra information about the validity of the uniform \mbox{$\Fil$-bound}\-ed\-ness principle for $X.$\\ The following problem therefore remains open.

\begin{question}
    For which Fr\'echet-Schwartz spaces $X$ and which filters $\Fil$ does the (stationary) uniform \mbox{$\Fil$-bound}\-ed\-ness principle hold for $X$?
\end{question}

\noindent
This question is a special case of the following more general question asking whether ({stationary}) UBP-filters are necessary for uniform \mbox{$\Fil$-bound}\-ed\-ness principles in those cases left open by Lemma~\ref{normeerbaar quotient}.

\begin{question}
    If $X$ is a Fr\'echet space that does not admit an infinite-dimensional Banach quotient, for which filters $\Fil$ does the (stationary) uniform \mbox{$\Fil$-bounded} principle hold for~$X$?
\end{question}

\noindent
Because of Lemma~\ref{P  voor niet normeerbaar}, every such filter~$\Fil$ should be at least a (weak) {\sterkePfilter} and the question comes down to determining exactly for which spaces the extra rapidity$^+$ assumption is (fully) necessary.\\
\\
Finally, we point to the study of uniform \mbox{$\Fil$-bound}\-ed\-ness principles in the more \mbox{general} setting of (not necessarily Fr\'echet) barrelled topological vector spaces, which was not addressed here, but could be of interest for further research.

\nocite{*}
\bibliographystyle{plainnat}
\bibliography{bibliografieUBP}

\begin{thebibliography}{28}
\providecommand{\natexlab}[1]{#1}
\providecommand{\url}[1]{\texttt{#1}}
\expandafter\ifx\csname urlstyle\endcsname\relax
  \providecommand{\doi}[1]{doi: #1}\else
  \providecommand{\doi}{doi: \begingroup \urlstyle{rm}\Url}\fi

\bibitem[Artstein-Avidan et~al.(2015)Artstein-Avidan, Giannopoulos, and
  Milman]{proofDvoretzky}
Shiri Artstein-Avidan, Apostolos Giannopoulos, and Vitali~D. Milman.
\newblock \emph{Asymptotic Geometric Analysis, Part I}, volume 202 of
  \emph{Mathematical Surveys and Monographs}.
\newblock American Mathematical Society, 2015.

\bibitem[{Avil{\'e}s Lopez} et~al.(2007){Avil{\'e}s Lopez}, Salinas, Kadets,
  and Leonov]{kadets}
Antonio {Avil{\'e}s Lopez}, Bernardo~Cascales Salinas, Vladimir Kadets, and
  Alexander Leonov.
\newblock The {Schur} {$\ell_1$} theorem for filters.
\newblock \emph{Journal of Mathematical Physics, Analysis, Geometry},
  3\penalty0 (4):\penalty0 383\--398, 2007.

\bibitem[{Avil{\'e}s Lopez} et~al.(2017){Avil{\'e}s Lopez}, Kadets,
  Hern{\'a}ndez, and Solecki]{kadets3}
Antonio {Avil{\'e}s Lopez}, Vladimir Kadets, Antonio~P{\'e}rez Hern{\'a}ndez,
  and Slawomir Solecki.
\newblock Baire theorem for ideals of sets.
\newblock \emph{Journal of Mathematical Analysis and Applications},
  445\penalty0 (2):\penalty0 1221\--1231, 2017.

\bibitem[Behrends(2000)]{behrends}
Ehrhard Behrends.
\newblock Lectures on {Rosenthal}'s {$\ell^1$}-theorem.
\newblock \emph{Rendiconti dell'Istituto di Matematica dell'Universita di
  Trieste}, 31\penalty0 (1), 2000.

\bibitem[Benedikt(1995)]{benedikt}
Michael Benedikt.
\newblock Nonstandard analysis in selective universes.
\newblock In N.\ Cutland, V.\ Neves, F.\ Oliveira, and J.\ Sousa-Pinto,
  editors, \emph{Developments in nonstandard mathematics, Pitman Research Notes
  Math.}, volume 336, pages 211\--223. Longman, Harlow, 1995.

\bibitem[Blass(1978)]{blass2}
Andreas Blass.
\newblock A model-theoretic view of some special ultrafilters.
\newblock In A.\ Macintyre, L.\ Pacholski, and J.\ Paris, editors, \emph{Logic
  Colloquium '77}, volume~96 of \emph{Studies in Logic and the Foundations of
  Mathematics}, pages 79\--90. North-Holland Publishing Company, 1978.

\bibitem[Connor et~al.(2000)Connor, Ganichev, and Kadets]{connor}
Jeff Connor, Mikhail Ganichev, and Vladimir Kadets.
\newblock A characterization of {Banach} spaces with separable duals via weak
  statistical convergence.
\newblock \emph{Journal of Mathematical Analysis and Applications},
  244:\penalty0 251\--261, 2000.

\bibitem[Dvoretzky(1964)]{dvoretzky}
Aryeh Dvoretzky.
\newblock Some results on convex bodies and {Banach} spaces.
\newblock \emph{Matematika}, 8\penalty0 (1):\penalty0 73\--102, 1964.

\bibitem[Farah(1997)]{farah}
Ilijas Farah.
\newblock \emph{Analytic ideals and their quotients}.
\newblock PhD thesis, University of Toronto, 1997.

\bibitem[Filip{\'o}w et~al.(2011)Filip{\'o}w, Mr{\.{o}}zek, Rec{\l}aw, and
  Szuca]{filipow}
Rafa{\l} Filip{\'o}w, Nikodem Mr{\.{o}}zek, Ireneusz Rec{\l}aw, and Piotr
  Szuca.
\newblock Ideal version of {Ramsey's} theorem.
\newblock \emph{Czechoslovak Mathematical Journal}, 61\penalty0 (2):\penalty0
  289\--308, 2011.

\bibitem[Frerick(2007)]{frerick}
Leonhard Frerick.
\newblock Extension operators for spaces of infinite differentiable {Whitney}
  jets.
\newblock \emph{{Journal} f{\"u}r die reine und angewandte {Mathematik}},
  602:\penalty0 123\--154, 2007.

\bibitem[Ganichev and Kadets(2001)]{ganichev}
Mikhail Ganichev and Vladimir Kadets.
\newblock Filter convergence in {Banach} spaces and generalized bases.
\newblock In T.\ Banakh, editor, \emph{General Topology in {Banach} Spaces},
  pages 61\--69. NOVA Science Publishers, Huntington, New York, 2001.

\bibitem[Garc{\'i}a-Ferreira and Pino-Villela(2012)]{pino}
Salvador Garc{\'i}a-Ferreira and Humberto~Sa{\'u}l Pino-Villela.
\newblock Characterizing filters by convergence (with respect to filters) in
  {Banach spaces}.
\newblock \emph{Topology and its Applications}, 159\penalty0 (4):\penalty0
  1246\--1257, 2012.

\bibitem[Halbeisen(2017)]{halbeisen}
Lorenz~J. Halbeisen.
\newblock \emph{Combinatorial set theory: {With} a gentle introduction to
  forcing}.
\newblock Springer, 2017.

\bibitem[Hru{\v{s}}{\'a}k(2011)]{hrusak}
Michael Hru{\v{s}}{\'a}k.
\newblock Combinatorics of filters and ideals.
\newblock \emph{Contemporary Mathematics}, 533:\penalty0 29\--69, 2011.

\bibitem[Jarchow(1981)]{jarchow}
Hans Jarchow.
\newblock \emph{Locally Convex Spaces}.
\newblock Mathematische {Leitf{\"a}den}. Stuttgart: Teubner, 1981.

\bibitem[Kadets and Leonov(2007)]{kadets2}
Vladimir Kadets and Alexander Leonov.
\newblock Dominated convergence and {Egorov} theorems for filter convergence.
\newblock \emph{Journal of Mathematical Physics, Analysis, Geometry},
  3\penalty0 (2):\penalty0 196\--212, 2007.

\bibitem[Kadets and Leonov(2009)]{kadets4}
Vladimir Kadets and Alexander Leonov.
\newblock Weak and {point-wise} convergence in {$C(K)$} for filter convergence.
\newblock \emph{Journal of Mathematical Analysis and Applications},
  350:\penalty0 455\--463, 2009.

\bibitem[Kolk(2010)]{kolk}
Enno Kolk.
\newblock Banach-{Steinhaus} type theorems for statistical and
  {$\mathcal{I}$-convergence} with applications to matrix maps.
\newblock \emph{Rocky Mountain Journal of Mathematics}, 40\penalty0
  (1):\penalty0 279\--289, 2010.

\bibitem[Kunen(1976)]{kunen}
Kenneth Kunen.
\newblock Some points in {$\beta N$}.
\newblock \emph{Mathematical Proceedings of the Cambridge Philosophical
  Society}, 80\penalty0 (3):\penalty0 385\--398, 1976.

\bibitem[Matet and Pawlikowski(1998)]{cardinalinvariants}
Pierre Matet and Janusz Pawlikowski.
\newblock Ideals over {$\omega$} and cardinal invariants of the continuum.
\newblock \emph{The Journal of Symbolic Logic}, 63\penalty0 (3):\penalty0
  1040\--1054, 1998.

\bibitem[Miller(1980)]{miller}
Arnold~W. Miller.
\newblock There are no {$Q$}-points in {Laver}'s model for the {Borel}
  conjecture.
\newblock \emph{Proceedings of the American Mathematical Society}, 78\penalty0
  (1):\penalty0 103\--106, 1980.

\bibitem[Repick{\'y}(2019)]{repicky}
Miroslav Repick{\'y}.
\newblock {Rosenthal} families, filters, and semifilters, 2019.
\newblock \url{https://im.saske.sk/~repicky/rffs.pdf}, Accessed Aug. 8, 2020.

\bibitem[Rudin(1956)]{rudin}
Walter Rudin.
\newblock Homogeneity problems in the theory of {{\v{C}}ech} compactifications.
\newblock \emph{Duke Mathematical Journal}, 23\penalty0 (3):\penalty0
  409\--419, 1956.

\bibitem[Rudin(1987)]{rudin2}
Walter Rudin.
\newblock \emph{Real and complex analysis}.
\newblock McGraw-Hill, 1987.

\bibitem[Shelah(1998)]{shelah}
Saharon Shelah.
\newblock \emph{Proper and improper forcing}.
\newblock Perspectives in Mathematical Logic. Springer, 1998.

\bibitem[Sokal(2011)]{sokal}
Alan~D. Sokal.
\newblock A really simple elementary proof of the uniform boundedness theorem.
\newblock \emph{The American Mathematical Monthly}, 118\penalty0 (5):\penalty0
  450\--452, 2011.

\bibitem[Treves(1967)]{treves}
Fran{\c{c}}ois Treves.
\newblock \emph{Topological vector spaces, distributions and kernels}.
\newblock Pure and Applied Mathematics. Academic Press, 1967.

\end{thebibliography}

$\;$
\bigskip

\noindent
Department of Mathematics: Analysis, Logic and Discrete Mathematics,\\ Ghent University, Krijgslaan 281, B-9000 Ghent, Belgium
\smallskip

\noindent
E-mail addresses:\\ ben.de-bondt@imj-prg.fr (Ben De Bondt)\\
hans.vernaeve@ugent.be (Hans Vernaeve)

\end{document}